\newtheorem{teo}{Theorem}[section]
\newtheorem{prop}[teo]{Proposition}
\newtheorem{lem}[teo]{Lemma}
\newtheorem{coro}[teo]{Corollary}
\newtheorem{defi}[teo]{Definition}
\newtheorem{rem}[teo]{Remark}
\newtheorem{problem}[teo]{Problem}
\def\bh{{\cal B}({\cal H})}
\def\bp{{\cal B}_p({\cal H})}
\def\g{\mathfrak g}
\def\k{\mathfrak k}
\def\p{\mathfrak p}
\def\s{\mathfrak s}
\def\u{\mathfrak u}
\def\ad{{\rm{ad\, }}}
\begin{document}

\title{\vspace*{0cm}Manifolds of semi-negative curvature\footnote{2000 MSC. Primary 22E65, 53C45;  Secondary 47B10, 58B20, 53C40.}}
%53C45 Global surface theory (convex surfaces à la A. D. Aleksandrov)
%22E65 Infinite-dimensional Lie groups and their Lie algebras
%58B20 Riemannian, Finsler and other geometric structures
%53C40 Global submanifolds
%47B10 Operators belonging to operator ideals (nuclear, $p$-summing, in the Schatten-von Neumann classes, etc.)
\date{}
\author{Cristian Conde and Gabriel Larotonda\footnote{Both authors partially supported by Instituto Argentino de Matemática, CONICET}}

\maketitle

%\begin{spacing}{.95}
\abstract{\footnotesize{\noindent }The notion of nonpositive curvature in Alexandrov's sense is extended to include $p$-uniformly convex Banach spaces. Infinite dimensional manifolds of semi-negative curvature with a $p$-uniformly convex tangent norm fall in this class on nonpositively curved spaces, and several well-known results, such as existence and uniqueness of best approximations from convex closed sets, or the Bruhat-Tits fixed point theorem, are shown to hold in this setting, without dimension restrictions. Homogeneous spaces $G/K$ of Banach-Lie groups of semi-negative curvature are also studied, explicit estimates on the geodesic distance and sectional curvature are obtained. A characterization of convex homogeneous submanifolds is given in terms of the Banach-Lie algebras. A splitting theorem via convex expansive submanifolds is proven, inducing the corresponding splitting of the Banach-Lie group $G$. Finally, these notions are used to study the structure of the classical Banach-Lie groups of bounded linear operators acting on a Hilbert space, and the splittings induced by conditional expectations in such setting.\footnote{{\bf Keywords and
phrases:} Banach-Finsler manifold, Banach-Lie group,  geodesic convexity, nonpositive curvature, semi-negative curvature, splitting theorems}}
%\end{spacing}

\setlength{\parindent}{0cm} %% para que no indente los parrafos nuevos

\section{Introduction}\label{intro1}

The study of nonpositively curved spaces began with the work of Hadamard in the early years of the last century, and the work of Cartan about twenty years later. But the foundations of the theory of metric spaces with upper curvature bounds were laid in the 50's with the work of Alexandrov \cite{alexandrov} and Busemann \cite{busemann}, who actually coined the term ``nonpositively curved space''. At the heart of their view-point (the use of conditions which are equivalent to nonpositive sectional curvature in the Riemannian case, rather than sectional curvature itself) is the work of Menger and Wald \cite{menger,wald}, who introduced the notions of curves in metric spaces, geodesic length spaces and comparison triangles. 

Nonpositive curvature in the sense of Alexandrov states that sufficiently small geodesic triangles in the metric space $(X,d)$ are at least as thin as corresponding Euclidean triangles. Equivalently $X$ verifies the CN inequality of Bruhat and Tits: for any $x\in X$ and any geodesic  segment $\gamma\in X$,
$$%referencia BruT72 en Bridson-Haefliger
\frac{1}{4}L(\gamma)^2 \leq \frac{1}{2}(d(x,\gamma_0)^2+d(x,\gamma_1)^2)-d(x,\gamma_{1/2})^2,
$$
provided $\gamma$ is sufficiently close to $x$. If $X$ is a 2-uniformly convex Banach space (in the sense of Ball, Carlen and Lieb \cite{ball}, i.e  there exists a positive constant $C$ such that
$$
2(\frac{1}{C}\|v\|^2+\|w\|^2)\leq \|v-w\|^2+\|v+w\|^2,
$$
for any $u,v\in X$), then the nonpositive curvature condition of Alexandrov holds for $X$ if $C\le 1$. It has been observed that Banach spaces with a $p$-uniformly convex norm ($p\ge 2$) share many of the nice properties of Hilbert spaces in spite of the fact that they do not verify Alexandrov's definition of nonpositive curvature: a Banach space has to be necessarily Euclidean  to verify the above inequality \cite[II.1.14]{bridson}. So it is natural to consider such Banach spaces as a convenient generalization of Euclidean space, leading us to introduce the notion of Busemann $p$-space, which is a geodesic length space which verifies a curvature condition %Rem 5.5 en Ballman
$$
\frac{1}{(2K)^p}L(\gamma)^p \leq \frac{1}{2}(d(x,\gamma_0)^p+d(x,\gamma_1)^p)-d(x,\gamma_{1/2})^p.
$$

The link with smooth manifolds is given by the following elementary fact: if $M$ is a Riemannian-Hilbert manifold of semi-negative sectional curvature, then
$$
\|(\exp_x)_{*v}(w)\|\ge \|w\|
$$
for any $x\in M$ and $v,w\in T_xX$ (here $(\exp_x)_*$ denotes the differential of the exponential map of $M$). This condition is adopted  in \cite{neeb}  by  Neeb as a definition of semi-negative curvature in the context of Banach-Finsler manifolds, and one of the main results in that paper is a Cartan-Hadamard theorem for such manifolds.  We show that if the Finsler norm of $M$ is $p$-uniformly convex, then $M$ can be regarded as a Busemann $p$-space.

Are these spaces nonpositively curved in the sense of Busemann? Namely, is the distance map between two geodesics a convex function in this setting? This question was shown to have a positive answer by Lawson and Lim, as part of their studies on symmetric spaces \cite{lim}. What other properties (of a Riemann-Hilbert manifold) can be translated to this context? e.g. existence of best approximations from convex sets, the Bruhat-Tits theorem for groups of isometries. One of the purposes of this paper is to answer some of the questions posed in Neeb's paper, assuming in some cases that the tangent norms of $M$ are $p$-uniformly convex, thus dealing with the Busemann $p$-spaces just introduced.

\medskip

In the special situation when $M=G/K$ is an homogeneous space of semi-negative curvature, it is obtained in \cite{neeb} a polar decomposition for $G$ that generalizes the usual polar decomposition for the group ${\cal B}({\cal H})^{\times}$ of invertible bounded operators in a Hilbert space ${\cal H}$. In this paper, we translate to this setting $M=G/K$ several results on operator theory -particularly results on the group of invertibles of $C^*$-algebras- that through time have been established using operator-theoretic techniques, such as the splitting theorems due to Porta and Recht \cite{pr}. To establish such results, we give  a detailed   characterization of the convex homogeneous submanifolds of $M$, which we think are interesting in their own right, since an infinite dimensional theory is still lacking.

This paper is organized as follows: in Section \ref{intro}, the reader can find the basic definitions concerning Banach-Finsler manifolds with spray, and an account on the results  in \cite{neeb}. In Section \ref{metricproblems}, we study manifolds $M$ of semi-negative curvature with a $p$-uniformly convex tangent norm, leading to the concept of Busemann $p$-space. We translate several results from the Riemannian context to this setting, and we establish metric splitting theorems for $M$ via convex submanifolds $C$ by means of the Birkhoff orthogonal to the tangent spaces $T_xC, x\in C$. Section \ref{homogeneous} establishes some general metric results on homogeneous spaces $M=G/K$ of semi-negative curvature, such as formulas for the geodesic distance and estimates of sectional curvature, and then a characterizations of the different levels of convexity that arise in this setting is given. We conclude with a splitting theorem for the homogeneous space $M$ via convex expansive reductive submanifolds, which gives the corresponding splitting of the Banach-Lie group $G$.

We have included an Appendix with homogeneous spaces constructed via operator algebras, generalizing the typical scheme of positive invertible operators, that is $G={\cal B}({\cal H})^{\times}$, $K$=the group of unitary operators of ${\cal H}$. This construction provides examples of manifolds of semi-negative curvature with a  $p$-uniformly convex tangent norm, and also of convex homogeneous submanifolds. Conditional expectations in ${\cal B}({\cal H})$ provide the splitting theorems, inducing factorization of linear operators via $C^*$-subalgebras of ${\cal B}({\cal H})$.

\section{Background}\label{intro}

Let $M$ be a Banach manifold with spray. Then $M$ is a smooth manifold locally isomorphic to a Banach space, provided with a second order vector field $F:TM\to TTM$. A standard reference on the subject is the book of Lang \cite[IV.4]{lang}. Recall that such a field verifies $\pi_*\circ F=id_{TM}$, where $\pi:TM\to M$ is the projection map of the tangent bundle, and
$$
F(sv)=(s_M)_* sF(v)\quad\mbox{ for any }s\in \mathbb R,\quad v\in TM. 
$$
Here $s_M:TM\to TM$ denotes the multiplication map $v\mapsto sv$ by $s\in\mathbb R$, and throughout this paper $f_{*}:TX\to TY$ indicates the differential of the smooth map $f:X\to Y$. We use $f_{*x}$ to indicate the differential of $f$ at $x\in X$. 

Let $v\in TM$, and $\beta_v$ the unique integral curve of $F$ with initial condition $v$, that is $\beta_v:I\to TM$, $\beta_v(0)=v$ and 
$$
\frac{d}{dt}\beta_v=F(\beta_v).
$$
Let ${\cal D}_{\exp}\subset TM$ stand for the set of vectors $v$ such that $\beta_v$ is defined at least on the interval $[0,1]$. The exponential map $\exp:{\cal D}_{\exp}\to M$ is defined accordingly to
$$
\exp(v)=\pi (\beta_v(1)),
$$
and the restriction of $\exp$ to each $T_xM$ will be denoted by $\exp_x$. The geodesics of $M$ at $x$ with initial speed $w\in T_xM$ are then given by $\alpha(t)=\pi(\beta_v(t))$, where $v=(x,w)\in TM$. Parallel translation along $\alpha$ will be denoted as
$$
P^t_s(\alpha):T_{\alpha(s)}M\to T_{\alpha(t)}M.
$$

A \textit{tangent norm} on $M$ is a map $b:TM\mapsto \mathbb R^+$ whose restriction to each $T_xM$ is a norm, and it is called a \textit{compatible norm} if the topology induced by $b$ on each $T_xM$ matches the topology induced on it by the Banach space norm.

A \textit{Finsler manifold} is a pair $(M,b)$ of a Banach manifold $M$ and a compatible norm $b$ on $TM$. In this paper we identify $b$ with the subjacent norm $\|\cdot\|_x=b(x)$ of the Banach space, and we measure the length of piecewise smooth curves  $\gamma:[a,b]\to M$ with the usual rectifiable length given by
$$
L_a^b(\gamma)=\int_a^b\|\dot{\gamma}\|_{\gamma}\,dt,
$$
and when $\gamma$ is defined in $I=[0,1]$, we use $L(\gamma)$ for short.

In this paper the term smooth means $C^1$ and with nonzero derivative. The set of piecewise smooth curves in $M$ joining two points $x,y\in M$ will be denoted by $\Omega_{x,y}$, 
$$
\Omega_{x,y}=\{\gamma:[0,1]\to M,\quad\gamma\mbox{ is piecewise smooth},\; \gamma(0)=x,\gamma(1)=y\},
$$
and the distance between points in $M$ is defined as the infimum of the lengths of the piecewise smooth curves joining them, 
$$
d(x,y)=\inf\{L(\gamma),\gamma\in\Omega_{x,y}\}.
$$
Let $Aut(M)=Aut(M,b)$ stand for the group of compatible automorphisms of $M$, which  is the set of diffeomorphisms $\varphi$ of $M$ such that $b\circ \varphi_*=b$. Then the distance defined above is compatible in the sense that the induced topology matches the topology of $M$, and it is invariant for the action of the automorphism group of $M$. See \cite[Prop. 12.22]{upmeier} for a proof of these facts.

A \textit{Finsler manifold with spray} is a Finsler manifold such that the tangent norm $b$ is invariant under parallel transport along geodesics. 

\subsection{Cartan-Hadamard manifolds}\label{ch}

In \cite{neeb} was established by Neeb a definition of semi-negative curvature for Finsler manifolds with spray, we recall it here. A Finsler manifold $M$ with a spray has semi-negative curvature if, for any $x\in M$ and $v\in T_xM\cap {\cal D}_{\exp}$, then
\begin{enumerate}
\item $(\exp_x)_{*v}$ is invertible. 
\item For any $w\in T_xM$,
\begin{equation}\label{semi}
\|(\exp_x)_{*v}(w)\|_{\exp_x(v)}\ge \|w\|_x.
\end{equation}
\end{enumerate}
\medskip

The following Cartan-Hadamard  theorem can be found in \cite[Th. 1.10]{neeb}:

\begin{teo}\label{CH}
Let $M$ be a connected Banach-Finsler manifold with spray with semi-negative curvature. Then $M$ is geodesically complete if and only if $M$ is complete, and in that case for each $x\in M$ the exponential map $\exp_x:T_xM\to M$ is a surjective covering. In particular if $M$ is simply connected $\exp_x$ is an isomorphism for each $x\in M$.
\end{teo}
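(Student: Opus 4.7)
My plan is to extract from the two semi-negative curvature hypotheses the basic fact that $\exp_x$ is a length-non-decreasing local diffeomorphism, and then derive the covering and completeness statements from this. Hypothesis (1) together with the inverse function theorem makes $\exp_x$ a local diffeomorphism at each $v\in T_xM\cap{\cal D}_{\exp}$, and for any piecewise smooth $\sigma$ in ${\cal D}_{\exp}\subset T_xM$ the chain rule combined with (\ref{semi}) yields
\[
L(\exp_x\circ\sigma)=\int \|(\exp_x)_{*\sigma(t)}(\dot\sigma(t))\|_{\exp_x(\sigma(t))}\,dt \ge L(\sigma).
\]
Read backwards, any lift $\tilde\alpha$ through $\exp_x$ of a piecewise smooth curve $\alpha$ in $M$ satisfies $L(\tilde\alpha)\le L(\alpha)$, and in particular its norm-oscillation in $T_xM$ is bounded by $L(\alpha)$.

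The heart of the argument is then a path-lifting step, carried out under the assumption that $M$ is geodesically complete (so ${\cal D}_{\exp}=TM$). Given a piecewise smooth $\alpha:[0,1]\to M$ and $v_0\in\exp_x^{-1}(\alpha(0))$, let $T^*$ be the supremum of those $T$ admitting a lift $\tilde\alpha:[0,T]\to T_xM$ through $\exp_x$ with $\tilde\alpha(0)=v_0$. Openness of the set of such $T$ is the local diffeomorphism property, and closedness follows from the reverse length bound: for $s<s'<T^*$ we have $\|\tilde\alpha(s')-\tilde\alpha(s)\|_x\le L_s^{s'}(\alpha)$, which tends to zero as $s,s'\to T^*$ because $\alpha$ has finite length on the compact $[0,T^*]$. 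The lift is therefore norm-Cauchy, extends continuously to $T^*$ using Banach completeness of $T_xM$, and the local diffeomorphism carries it past $T^*$, forcing $T^*=1$. Surjectivity of $\exp_x$ follows by lifting any curve from $x=\exp_x(0)$ to an arbitrary $y\in M$, and the combination of local diffeomorphism with unique continuous path lifting is the covering property.

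Finally I would close the equivalence of completeness and the simply connected case. Metric completeness of $M$ implies geodesic completeness by the standard extension argument: any geodesic has $\|\dot\gamma\|$ constant (the tangent norm is parallel-invariant by definition of Finsler manifold with spray), hence is Lipschitz, extends continuously to the closed interval by completeness, and is then continued past the endpoint by local existence of integral curves of the spray. Conversely, given $M$ geodesically complete, a Cauchy sequence $(y_n)\subset M$ yields, after passing to a subsequence $(z_k)$ with $d(z_k,z_{k+1})<2^{-k}$ and choosing short connecting curves, a concatenated curve whose lift starting at a fixed preimage of $y_0$ has endpoints $\tilde z_k$ with $\|\tilde z_k-\tilde z_{k+1}\|<2^{-k+1}$; these converge in the Banach space $T_xM$ to some $v$, whence $z_k\to\exp_x(v)$ in $M$. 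The simply connected case is then immediate because $T_xM$ is contractible, so the covering $\exp_x$ has simply connected total space and base and must be one-sheeted. The main obstacle throughout is the closedness step in the path lifting, which relies on the \emph{reverse} of the expansion inequality from (2) and would collapse without both the Banach completeness of $T_xM$ and the parallel-invariance of the tangent norm that turns the spray into a source of genuine constant-speed geodesics.
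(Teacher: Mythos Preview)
The paper does not prove this theorem itself; it is quoted from Neeb \cite[Th.~1.10]{neeb} as background. Your outline follows what is essentially the standard Cartan-Hadamard argument in this setting, and it is correct: the expansion bound (\ref{semi}) makes $\exp_x$ a length-nondecreasing local diffeomorphism, the resulting contraction on lifts yields the open-closed path-lifting argument you describe, and a surjective local diffeomorphism with unique path lifting onto a (locally simply connected) manifold is a covering. The converse completeness argument via lifting a concatenated short curve is also sound.

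The one step that deserves a second look is the implication ``metrically complete $\Rightarrow$ geodesically complete''. You extend the geodesic $\gamma$ continuously to the endpoint $T$ by Lipschitzness and then invoke local existence for the spray to push past $T$. But the spray is a vector field on $TM$, so local existence needs an initial condition $(\gamma(T),\dot\gamma(T))\in TM$; the continuous extension of $\gamma$ alone does not supply $\dot\gamma(T)$. The fix is standard but should be said: in a chart near $\gamma(T)$ the homogeneity of the spray makes the geodesic equation read $\ddot\gamma=f(\gamma,\dot\gamma)$ with $f$ smooth and fibrewise quadratic; since $\|\dot\gamma\|_\gamma$ is constant and the tangent norm is compatible, the chart representative of $\dot\gamma$ stays bounded, hence $\ddot\gamma$ is bounded, $\dot\gamma$ is Lipschitz and extends continuously to $T$, and only then does local existence for the spray apply.
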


\begin{rem}\label{mas}
Since $M$ has semi-negative curvature, if $\Gamma$ is any lift (to $T_xM$) of a smooth curve $\gamma\in M$, then
\begin{equation}\label{compare}
L_{T_xM}(\Gamma)\le L_M(\gamma).
\end{equation}
Indeed, since $\exp_x(\Gamma)=\gamma$, then 
$$
\|\dot{ \gamma }\|_{\gamma}= \|(\exp_x)_{* \Gamma}(\dot{\Gamma})\|_{\exp_x(\Gamma)}\ge \|\dot\Gamma\|_{x}.
$$
If $\gamma$ is any smooth curve joining $x$ to $y$ in $M$, let $\Gamma\subset T_xM$ be the unique lift of $\gamma$ such that $\Gamma(0)=0$. Then
$$
L(\gamma)\ge L(\Gamma)\ge \|\Gamma(1)\|_x=L(\gamma_{x,y}),
$$
where $\gamma_{x,y}=\exp_x(t\Gamma(1))$. In particular, given two points $x,y\in M$ there exists a smooth curve $\gamma_{x,y}$ (which is a geodesic) such that $\gamma_{x,y}$ is minimizing for the geodesic distance.
\end{rem}

\begin{rem}\label{otrascortas}
Caution: in spite of the fact that the distance function is convex in a manifold of semi-negative curvature (Theorem \ref{convex} below), there might be other short (i.e. distance minimizing) curves, see Remark \ref{otras} at the Appendix.

However, provided that the norm of $TM$ is \textit{strictly convex},
$$
\|v+w\|=\|v\|+\|w\|\mbox{ implies } v=\lambda w\mbox{ for some }\lambda\in [0,+\infty),
$$
the short curves are unique: Proposition \ref{uni} below proves this fact. Compare to Corollary 6.3 in the book of Lang \cite[Ch. VIII]{lang}, where uniqueness is proved via Gauss Lemma in the Riemann-Hilbert context.
\end{rem}

\begin{defi}
A Cartan-Hadamard manifold is a simply connected, complete Finsler manifold $M$ of semi-negative curvature.
\end{defi}

The question of whether the distance function is convex or not in a Cartan-Hadamard manifold was positively answered in \cite{lim}, let us state this result.

\begin{teo}\label{convex}
Let $M$ be a Cartan-Hadamard manifold, let $\alpha,\beta$ be two geodesics. Then the distance map $f:[0,1]\to\mathbb [0,+\infty)$ given by
$$
f(t)=d(\alpha(t),\beta(t))
$$
is convex.
\end{teo}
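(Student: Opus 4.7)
The plan is to reduce convexity of $f$, via continuity and the standard midpoint-averaging trick, to a triangle midpoint comparison, and then to establish that comparison using the semi-negative curvature inequality (\ref{semi}).

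First, $f$ is continuous on $[0,1]$ since $d$ and the geodesics $\alpha,\beta$ are; hence a continuous midpoint-convex function is convex, and it suffices (after reparametrizing each subinterval) to prove
$$
f(1/2)\leq \frac{1}{2}\bigl(f(0)+f(1)\bigr).
$$

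Second, I would reduce this to the following triangle midpoint lemma: for any $P,Q,R\in M$, if $M_Q$ and $M_R$ denote the midpoints of the canonical geodesics $\gamma_{P,Q}(t)=\exp_P(t\exp_P^{-1}(Q))$ and $\gamma_{P,R}(t)=\exp_P(t\exp_P^{-1}(R))$ provided by Theorem \ref{CH}, then
$$
d(M_Q,M_R)\leq \frac{1}{2}\, d(Q,R).
$$
Granting the lemma, let $\sigma$ denote the canonical geodesic from $\alpha(0)$ to $\beta(1)$. Applying the lemma with $(P,Q,R)=(\alpha(0),\alpha(1),\beta(1))$ yields $d(\alpha(1/2),\sigma(1/2))\leq \frac{1}{2}f(1)$; applying it with $(P,Q,R)=(\beta(1),\alpha(0),\beta(0))$ (using that reversing a geodesic leaves its midpoint unchanged) yields $d(\sigma(1/2),\beta(1/2))\leq \frac{1}{2}f(0)$. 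The triangle inequality at $\alpha(1/2),\sigma(1/2),\beta(1/2)$ then closes the midpoint step.

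Third, for the lemma itself, write $a=\exp_P^{-1}(Q)$, $b=\exp_P^{-1}(R)$, let $\gamma$ be the minimizing geodesic from $Q$ to $R$, and lift it to $\tilde\gamma=\exp_P^{-1}\circ\gamma$ in $T_PM$, joining $a$ to $b$. The natural comparison curve from $M_Q=\exp_P(a/2)$ to $M_R=\exp_P(b/2)$ is $m(s)=\exp_P\bigl(\frac{1}{2}\tilde\gamma(s)\bigr)$; its velocity is $\dot m(s)=\frac{1}{2}(\exp_P)_{*\tilde\gamma(s)/2}(\dot{\tilde\gamma}(s))$ while $\dot\gamma(s)=(\exp_P)_{*\tilde\gamma(s)}(\dot{\tilde\gamma}(s))$. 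Hence $d(M_Q,M_R)\leq L(m)\leq \frac{1}{2}L(\gamma)=\frac{1}{2}d(Q,R)$ would follow from the pointwise radial-monotonicity inequality
$$
\bigl\|(\exp_P)_{*u/2}(h)\bigr\|_{\exp_P(u/2)}\leq \bigl\|(\exp_P)_{*u}(h)\bigr\|_{\exp_P(u)}
$$
for every $u,h\in T_PM$.

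This last pointwise bound is the main obstacle: condition (\ref{semi}) only supplies the lower bound $\|(\exp_P)_{*u}(h)\|\geq\|h\|$, whereas what is needed is the monotonicity of $t\mapsto \|(\exp_P)_{*tu}(h)\|$. In the Riemann-Hilbert setting this is the classical consequence of convexity of $\|J(t)\|$ for Jacobi fields with $J(0)=0$, which flows from the sign of the sectional curvature. In the Finsler framework of Theorem \ref{convex} one must instead iterate (\ref{semi}) at intermediate base points $\exp_P(tu)$ along the radial geodesic, and assemble the resulting estimates using the uniqueness of geodesics supplied by Theorem \ref{CH}; carrying this out is essentially the technical core of the argument of Lawson and Lim in \cite{lim}.
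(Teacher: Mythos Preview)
The paper does not prove this theorem: it is quoted from Lawson and Lim \cite{lim}, and your outline ends in the same place, deferring the decisive step to that reference. Your reductions---midpoint convexity via continuity, the triangle midpoint lemma, and the comparison curve $m(s)=\exp_P\bigl(\tfrac{1}{2}\tilde\gamma(s)\bigr)$---are correct and standard; so is your diagnosis that the only missing ingredient is the radial monotonicity $\|(\exp_P)_{*u/2}h\|\le\|(\exp_P)_{*u}h\|$, which does not follow from a single application of (\ref{semi}).

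It is worth noting that later in the paper exactly your strategy is carried out in the homogeneous case (Proposition~\ref{loconvex}, item~2): there $(\exp_o)_{*v}=(\mu_{e^v})_{*o}F(\ad v)$ with $F(z)=z^{-1}\sinh z$, and the identity $F(\ad\beta)=F(\ad\beta/2)\cosh(\ad\beta/2)$ turns the needed radial monotonicity into the one-line estimate $\dot{\overline\alpha}=\tfrac{1}{2}\cosh(\ad\beta/2)^{-1}\dot\alpha$, whence $\|\dot{\overline\alpha}\|\le\tfrac{1}{2}\|\dot\alpha\|$ because $\cosh^{-1}$ is a contraction (Remark~\ref{cosh}). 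In the general Banach--Finsler setting no such explicit formula for $(\exp_P)_*$ is available, and one really does need the argument of \cite{lim} that you allude to; your phrase ``iterate (\ref{semi}) at intermediate base points'' is the right intuition, but as written it is a pointer to that paper rather than a proof.
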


\begin{rem}\label{buse}
Let $(X,d)$ be a metric space, which is also a \textit{geodesic length space} in the sense that the distance of $X$ can be computed via the infimum of the length of the rectifiable arcs joining given endpoints in $X$ (see \cite[Section 2.2]{jost}). A geodesic length space is \textit{globally non-positively curved in the sense of Busemann} if for given geodesic arcs $\alpha,\beta$ starting at $x\in X$, the distance map
$$
t\mapsto d(\alpha(t),\beta(t))
$$
is a convex function. Then by the theorem above, any Cartan-Hadamard manifold $(M,d)$, where $d$ is the rectifiable metric given by the Finsler norms, can be regarded as a metric space of nonpositive curvature in the sense of Busemann.
\end{rem}

\section{Metric problems}\label{metricproblems}

Let us begin this section with an elementary inequality (which can be found in the setting of Riemannian manifolds in \cite[Ch. IX, Cor. 3.10]{lang}). It will be useful later, it compares the distance in $M$ with the distance in the tangent linear space. We include a proof for the convenience of the reader. In the context of positive invertible operators (see the Appendix) it is known as the exponential metric increasing property.

\begin{lem}\label{emi}
Let $M$ be a Cartan-Hadamard manifold, let $x\in M$, and $v,w\in T_xM$. Then
$$
\|v-w\|_x\le d(\exp_x(v),\exp_x(w)).
$$
\end{lem}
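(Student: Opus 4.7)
The plan is to exploit the semi-negative curvature in the form already isolated in Remark \ref{mas}: the exponential map does not shrink the lengths of lifted curves. Since $M$ is Cartan-Hadamard (simply connected and complete), Theorem \ref{CH} says $\exp_x : T_xM \to M$ is a diffeomorphism, so every piecewise smooth curve in $M$ has a unique lift to $T_xM$ with any prescribed starting point. The idea is to pass any competitor curve in $M$ to a competitor curve in the Banach space $T_xM$ whose length can be estimated from below by $\|v-w\|_x$.

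Concretely, I would pick an arbitrary $\gamma \in \Omega_{\exp_x(v),\exp_x(w)}$ and let $\Gamma$ be its unique lift through $\exp_x$ with $\Gamma(0)=v$; then $\exp_x(\Gamma(1)) = \gamma(1) = \exp_x(w)$ and injectivity of $\exp_x$ gives $\Gamma(1)=w$. Applying (\ref{compare}) from Remark \ref{mas} (the proof there is just a pointwise invocation of the semi-negative curvature inequality (\ref{semi}), and it is not sensitive to the choice of base point of the lift) yields
$$
L_M(\gamma) \;\ge\; L_{T_xM}(\Gamma).
$$
The next step is the standard Banach-space fact that the rectifiable length of any piecewise smooth curve joining $v$ and $w$ in the normed space $(T_xM,\|\cdot\|_x)$ is at least $\|v-w\|_x$: this follows by approximating $\int_0^1\|\dot\Gamma\|_x\,dt$ by Riemann sums and applying the triangle inequality to telescoping differences $\Gamma(t_{i+1})-\Gamma(t_i)$.

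Chaining these inequalities gives $\|v-w\|_x \le L_{T_xM}(\Gamma) \le L_M(\gamma)$, and taking the infimum over $\gamma \in \Omega_{\exp_x(v),\exp_x(w)}$ produces the claimed bound. There is no real obstacle here: the only delicate point is making sure the lift is well-defined with the correct endpoints, which is ensured by the fact that a Cartan-Hadamard manifold is simply connected, so $\exp_x$ is a global diffeomorphism and not merely a covering.
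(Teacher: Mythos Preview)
Your proof is correct and essentially identical to the paper's: both lift an arbitrary competitor curve through the global diffeomorphism $\exp_x$, use the semi-negative curvature inequality (\ref{semi}) to compare lengths (you cite Remark \ref{mas}, the paper redoes this step inline), and bound the length of the lifted curve below by $\|v-w\|_x$ via the triangle inequality for the integral. The only cosmetic difference is that the paper writes $\|v-w\|_x = \|\int_0^1 \dot\Gamma\,dt\|_x \le \int_0^1 \|\dot\Gamma\|_x\,dt$ directly, whereas you phrase this as a Riemann-sum argument.
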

\begin{proof}
Let $\gamma$ be any piecewise smooth curve in $M$ joining $\exp_x(v)$ to $\exp_x(w)$. Then, by Theorem \ref{CH}, there exists a piecewise smooth curve $\Gamma\subset T_xM$ such that $\gamma=\exp_x(\Gamma)$, with $\Gamma(0)=v$ and $\Gamma(1)=w$. Now, since the differential of the exponential map is an isomorphism,
\begin{eqnarray}
\|w-v\|_x & = & \|\Gamma(0)-\Gamma(1)\|_x=\|\int_0^1 \dot{\Gamma}(t)dt\|_x \nonumber\\
& \le &\int_0^1 \|\dot{\Gamma}(t)\|_x dt=\int_0^1\|(\exp_x)_{*\Gamma}^{-1}(\dot{\gamma})\|_x dt.\nonumber
\end{eqnarray}
The last quantity inside the integral sign is by (\ref{semi}) less or equal than
$$
\|\dot{\gamma}(t)\|_{\exp_x(\Gamma)}=\|\dot{\gamma}(t)\|_{\gamma},
$$
hence $\|w-v\|_x\le L(\gamma)$. Since $\gamma$ is arbitrary, we obtain the asserted inequality.
\end{proof}

\begin{problem}
Equality in the above lemma imposes a rigidity condition; in Theorem \ref{mantiene} we study this problem, in the setting of homogeneous spaces. We would like to know if the following assertions hold in the general setting (here $R(\cdot,\cdot)$ indicates the curvature tensor of $M$ derived from the spray):
\begin{itemize}
\item  $R(v,w)\vrule_{span(v,w)}\equiv 0$ implies that equality holds in Lemma \ref{emi}.
\item If the tangent norms are strictly convex, and equality holds, then $R(v,w)$ restricted to $span(v,w)$ vanishes.
\end{itemize}
This problem is closely related to \cite[Problem 1.2]{neeb}.
\end{problem}

\begin{rem}\label{mil}
Let $x\in M$. Given $v,w\in T_xM$, for $r>0$ let 
$$
s_x(r,v,w)=\frac{r\|v-w\|_x-d(\exp_x(rv),\exp_x(rw))}{r^2 d(\exp_x(v),\exp_x(w))}.
$$
Milnor \cite{milnor} observed that, in the Riemannian setting, sectional curvature can be obtained via the limiting procedure
$$
s_x(v,w)=\frac16\lim_{r\to 0^+} s_x(r,v,w).
$$
Hence this limit (provided it exists) can be used as a suitable definition of curvature. In the present setting, by the inequality in Lemma \ref{emi} above, one has
$$
s_x(r,v,w)\le 0 \quad\mbox{ for any }r>0.
$$
So it seems only natural to ask if the limit exists, and if there are lower bounds. If $M=G/K$ is an homogeneous space, the answer is affirmative, see Section \ref{curvat} below.
\end{rem}

\subsection{Uniform convexity and minimizers}\label{uniconvex}

\begin{defi}
Let $(E,\|\cdot\|)$ be a Banach space. The modulus of convexity of $E$ is the non-negative number
$$
\delta_E(\varepsilon)=\inf\{1- \frac12\|x+y\|: \;\|x\|,\|y\|\le 1, \;\|x-y\|\ge \varepsilon\}.
$$
A Banach space is uniformly convex if $\delta_E(\varepsilon) \;> 0$ for any $\varepsilon\in (0,2]$. A uniformly convex Banach space is strictly convex (cf. Remark \ref{otrascortas}).
\end{defi}

\begin{rem}\label{unibanach}
Assume that $E$ is strictly convex. Then the unique short, piecewise smooth curves of $E$ are the straight segments \cite[Lemma 2.10]{menucci}. That is, if $\gamma$ is a piecewise smooth curve in $E$ joining $0$ to $v$, and $\gamma$ has length $\|v\|$, then $\gamma(t)=tv$. See \cite{menucci} also for examples of infinitely many smooth curves joining given endpoints, in the setting of Banach spaces with a norm that is not strictly convex.
\end{rem}

\begin{prop}\label{uni}
Let $M$ be a Cartan-Hadamard manifold. If the norm of $TM$ is strictly convex, then the geodesics of $M$ are the unique piecewise smooth short paths in $M$. 
\end{prop}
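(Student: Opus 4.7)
The strategy is to pull the problem back to the tangent space $T_xM$ via the exponential map, where it becomes the already-known uniqueness of short curves in a strictly convex Banach space (Remark \ref{unibanach}). Since $M$ is Cartan-Hadamard, Theorem \ref{CH} gives that $\exp_x:T_xM\to M$ is a diffeomorphism, so every curve $\gamma$ in $M$ admits a unique lift $\Gamma=(\exp_x)^{-1}\circ\gamma$ in $T_xM$.

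First I would fix $x,y\in M$ and a piecewise smooth short curve $\gamma\in\Omega_{x,y}$, so that $L(\gamma)=d(x,y)$. Let $\Gamma$ be the lift with $\Gamma(0)=0$, and set $v=\Gamma(1)$, so that $\exp_x(v)=y$. The radial curve $\gamma_{x,y}(t)=\exp_x(tv)$ is the geodesic joining $x$ to $y$; because $M$ is a Finsler manifold with spray, the tangent norm is invariant under parallel transport along geodesics, so $\|\dot\gamma_{x,y}(t)\|_{\gamma_{x,y}(t)}=\|v\|_x$ for all $t$, and hence $L(\gamma_{x,y})=\|v\|_x$. By Remark \ref{mas}, $\gamma_{x,y}$ is itself minimizing, so $d(x,y)=\|v\|_x$.

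Next I would run the chain of inequalities
$$
\|v\|_x \;=\; \|\Gamma(1)-\Gamma(0)\|_x \;\le\; L(\Gamma) \;\le\; L(\gamma) \;=\; d(x,y) \;=\; \|v\|_x,
$$
where the first inequality is the basic Banach-space estimate for the length of a curve and the second is exactly inequality (\ref{compare}) from Remark \ref{mas}. Forcing equalities throughout gives $L(\Gamma)=\|\Gamma(1)-\Gamma(0)\|_x$, which says that $\Gamma$ is a short, piecewise smooth curve in the Banach space $(T_xM,\|\cdot\|_x)$ joining $0$ to $v$.

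Finally, I would invoke Remark \ref{unibanach}: since $\|\cdot\|_x$ is strictly convex, the only such curve is the straight segment $\Gamma(t)=tv$. Applying $\exp_x$ yields $\gamma(t)=\exp_x(tv)=\gamma_{x,y}(t)$, so $\gamma$ is the geodesic. There is no real obstacle here; the only point that requires care is the constant-speed property of $\gamma_{x,y}$, for which the spray hypothesis on $M$ is essential. Everything else is a direct transfer, through the diffeomorphism $\exp_x$, of the strictly convex Banach-space fact recorded in Remark \ref{unibanach}.
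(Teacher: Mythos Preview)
Your proof is correct and follows essentially the same route as the paper's: lift $\gamma$ to $\Gamma$ in $T_xM$ via $\exp_x^{-1}$, use the length comparison (\ref{compare}) together with $L(\gamma_{x,y})=\|v\|_x$ to force $L(\Gamma)=\|v\|_x$, and then invoke Remark~\ref{unibanach}. Your chain of equalities is slightly more explicit than the paper's, but the argument is identical in substance.
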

\begin{proof}
Let $\gamma$ be a short curve in $M$, with $\gamma(0)=x$, $\gamma(1)=y$. Let $\Gamma\subset T_xM$ be such that $\exp_x(\Gamma)=\gamma$ and $\Gamma(0)=0$. Then $L(\Gamma)\le L(\gamma)=d(x,y)$ by eqn. (\ref{compare}). Let $v=\Gamma(1)\in T_xM$, let $\alpha(t)=\exp_x(tv)$. Then
$$
d(x,y)\le L(\alpha)=\int_0^1\|\dot{\alpha}\|_{\alpha}\,dt=\|v\|_x
$$
since $\alpha$ is a geodesic, and then $L(\Gamma)\le \|v\|_x$. Since $\Gamma$ joins $0$ to $v$ in $T_xM$, by Remark \ref{unibanach} we obtain that $\Gamma(t)=tv$, or in other words $\gamma(t)=\exp_x(tv)$.
\end{proof}

\begin{defi}
We call $M$ a $p$-uniformly convex Cartan-Hadamard manifold if there exists a positive constant $K_M$ and a number $p\ge 2$ such that 
\begin{equation}\label{mayor}
2(\frac{1}{K_M^p}\|v\|_x^p+\|w\|_x^p)  \le\|v+w\|_x^p+\|v-w\|_x^p, %\le 2^{p-1}\|v\|_x^p+\|w\|_x^p
\end{equation}
for any $x\in M$ and any $v,w\in T_xM$.
\end{defi}

By a result of Ball \textit{et al.} \cite{ball}, a uniformly convex Banach space $E$ has modulus of convexity of power type $p\ge 2$ (that is, $\delta_E(\varepsilon)\ge C\varepsilon^p$) if and only if there exists a constant $K_E>0$ such that a weak Clarkson inequality like (\ref{mayor}) holds. Hence we are assuming that all the tangent spaces of $M$ are of power type $p$, with $K_{T_xM}$ uniformly bounded by $K_M$. This condition guarantees uniform convexity, and in particular, strict convexity of the tangent norms.

This is a convenient generalization of the parallelogram law for the Riemannian metric of Riemann-Hilbert manifolds, since it induces strong convexity result analogous to the Gauss Lemma.  Among the simplest examples of uniformly convex Banach spaces of power type $p$ are the usual $L^p$ measure spaces of functions which were the original concern of Clarkson \cite{clarkson}, and their non-commutative counterpart, the $\bp$ spaces of compact Schatten operators.

\medskip

In this section we prove existence and uniqueness of minimizers in $p$-uniformly convex Cartan-Hadamard manifolds, and give a geometrical characterization of them. 

In what follows, for a given curve $\gamma:I\to M$, let us denote $\gamma(t)=\gamma_t$ for any $t\in I$, then if $\gamma$ is a geodesic, $\gamma_{\frac12}$ is the midpoint between $\gamma_0$ and $\gamma_1$.

\begin{teo}\label{para_p}
Let $M$ be a $p$-uniformly convex Cartan-Hadamard manifold. Let $x,y,z\in M$, and let $\gamma$ be the geodesic joining $y$ to $z$ in $M$. Then
\begin{equation}\label{strongconvex}
\frac{1}{(2K_M)^p}d(y,z)^p \leq \frac{1}{2}(d(x,y)^p+d(x,z)^p)-d(x,\gamma_{1/2})^p.
\end{equation}
\end{teo}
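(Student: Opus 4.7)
The plan is to work in the tangent space at the midpoint $m := \gamma_{1/2}$, where the $p$-uniform convexity inequality (\ref{mayor}) becomes directly usable thanks to a symmetry coming from the midpoint property. By Theorem \ref{CH}, $\exp_m : T_mM \to M$ is a diffeomorphism, so I would set
$$
U := \exp_m^{-1}(x), \qquad V := \exp_m^{-1}(y), \qquad W := \exp_m^{-1}(z).
$$
Since $m$ is the midpoint of the geodesic $\gamma$ joining $y$ to $z$, the two half-arcs of $\gamma$ emanating from $m$ are the geodesics from $m$ to $y$ and from $m$ to $z$, and they depart with opposite initial velocities. Uniqueness of geodesics through $m$ (via Cartan--Hadamard) then yields $W = -V$, together with the length identities
$$
\|V\|_m = \|W\|_m = \tfrac{1}{2}\, d(y, z), \qquad \|U\|_m = d(x, \gamma_{1/2}).
$$

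Next I would apply (\ref{mayor}) at $m$ to the pair $V, U \in T_mM$, obtaining
$$
2\left(\frac{1}{K_M^p}\|V\|_m^p + \|U\|_m^p\right) \le \|V + U\|_m^p + \|V - U\|_m^p.
$$
Using $W = -V$, I rewrite $V + U = U - W$. The exponential metric increasing inequality (Lemma \ref{emi}) applied at $m$ then yields
$$
\|V - U\|_m \le d(y, x), \qquad \|U - W\|_m \le d(x, z).
$$
Inserting these bounds and the length identifications above into the displayed inequality produces
$$
\frac{2}{(2K_M)^p}\, d(y, z)^p + 2\, d(x, \gamma_{1/2})^p \le d(x, y)^p + d(x, z)^p,
$$
which is (\ref{strongconvex}) after dividing by two.

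The only step that requires any care is the identity $W = -V$ together with the accompanying length identifications; these rest on the fact that $m$ is the midpoint of a unique minimizing geodesic, which in turn follows from the Cartan--Hadamard theorem and Remark \ref{mas}. Once this symmetry is in hand, the argument reduces to a single application of uniform convexity in the tangent space, combined with the exponential metric increasing property, and no further geometric input is needed.
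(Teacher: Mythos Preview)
Your proof is correct and follows essentially the same approach as the paper: both work in the tangent space at the midpoint $\gamma_{1/2}$, exploit the symmetry $\exp_m^{-1}(z)=-\exp_m^{-1}(y)$, apply the $p$-uniform convexity inequality (\ref{mayor}) there, and bound the right-hand side using Lemma \ref{emi}. Your notation differs (you write $U,V,W$ where the paper writes $w,-v,v$), and you spell out the justification of $W=-V$ more carefully, but the argument is the same.
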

\begin{proof}
Let $a=\gamma_{1/2}\in M$. Note that $d(a,z)=\frac12 L(\gamma)$. Let $v,w\in T_aM$ be such that $y=\exp_a(-v)$, $z=\exp_a(v)$ and $x=\exp_a(w)$. Then by Lemma \ref{emi},
$$
d(x,z)^p=d(\exp_a(w),\exp_a(v))^p\ge \|v-w\|_a^p
$$
and also
$$
d(x,y)^p=d(\exp_a(w),\exp_a(-v))^p\ge \|v+w\|_a^p.
$$
Adding these quantities and using the definition of $p$-uniform convexity, we obtain the stated inequality, since $\|v\|_a=d(a,z)$ and $\|w\|_a=d(a,x)$.
\end{proof}

\begin{rem}
Let $(X,d)$ be a geodesic length space \cite{jost}. Then $X$ is said to be \textit{non-positively curved in the sense of Alexandrov} if for any $x\in X$ and any geodesic segment $\gamma\in X$,
$$
\frac{1}{4}L(\gamma)^2 \leq \frac{1}{2}(d(x,\gamma_0)^2+d(x,\gamma_1)^2)-d(x,\gamma_{1/2})^2.
$$
Nonpositive curvature in the sense of Alexandrov implies nonpositive curvature in the sense of Busemann (see Remark \ref{buse} above for the definition of Busemann nonpositive curvature). 
\end{rem}

\begin{defi}
If $(X,d)$ is a geodesic length space and there exists a positive constant $K$ such that (\ref{strongconvex}) holds for any geodesic $\gamma$ joining $y,z\in X$, we say that \textit{$X$ is a Busemann $p$-space}. 

Let $(X,d)$ be a Busemann $p$-space. A set $C\subset X$ is called convex if, for given $x,y\in K$, the unique geodesic $\gamma_{x,y}$ of $X$ joining $x$ to $y$ is fully contained in $C$.
\end{defi}

Hence the semi-parallelogram law on $M$ (Theorem \ref{para_p}) gives a link with the spaces of nonpositive curvature as studied by Alexandrov, Ballman, Busemann, Gromov \textit{et al.}, see \cite{alexandrov,busemann,jost}. Then Busemann $p$-spaces lie somewhere in between Busemann spaces and Alexandrov spaces, since the metric of the manifold fulfills a strong inequality \textit{a la} Alexandrov, but one does not have the quadratic exponents.

\medskip

\begin{problem}
Evidently Banach-Finsler manifolds $M$ of semi-negative curvature with a $p$-uniformly convex tangent norm are Busemann $p$-spaces, and in that setting the distance between geodesics starting at a common point is a convex function. Is each Busemann $p$-space $(X,d)$ nonpositively curved in the sense of Busemann, for any $p\ge 2$? The proof for $p=2$ \cite[Cor. 2.3.1]{jost} only gives
$$
d(\alpha(t),\beta(t))^p\le t^2 d(\alpha(1),\beta(1))^p+(1-\frac{1}{K^p})\left(L(\alpha)^p+L(\beta)^p\right).
$$
for two geodesics starting at $x\in X$. Even for $K=1$ this is not sufficient.
\end{problem}

We now obtain existence of (unique) minimizers from a convex set to any given point outside it, in the same fashion as in \cite[Ch. 3]{jost}, where it is done for Alexandrov spaces.

\begin{teo}\label{minime}
Let $(X,d)$ be a Busemann $p$-space. Let $C\subset X$ be a convex closed set in $X$ and $x\in X$. Then there exists a unique point $x_C\in C$ such 
that
$$
d(x_C,x)=\min_{y\in C} d(y,x)=d(C,x).
$$
We call $x_C$ the best approximation of $x$ in $C$.
\end{teo}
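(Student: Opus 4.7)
The plan is to mimic the classical Hilbert-space argument for projection onto a closed convex set, replacing the parallelogram law by the Busemann $p$-inequality (\ref{strongconvex}).

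First I would put $d_0 = d(C,x) = \inf_{y\in C}d(y,x)$ and choose a minimizing sequence $y_n\in C$ with $d(y_n,x)\to d_0$. The key step is to show $\{y_n\}$ is Cauchy. Given $n,m$, let $\gamma^{n,m}$ be the geodesic joining $y_n$ to $y_m$ in $X$; by convexity of $C$ its midpoint $m_{n,m}=\gamma^{n,m}_{1/2}$ also lies in $C$, so $d(x,m_{n,m})\ge d_0$. Applying the inequality of Theorem \ref{para_p} (which defines the Busemann $p$-space) with $y=y_n$, $z=y_m$ gives
$$
\frac{1}{(2K)^p}d(y_n,y_m)^p \le \frac{1}{2}\bigl(d(x,y_n)^p+d(x,y_m)^p\bigr)-d(x,m_{n,m})^p\le \frac{1}{2}\bigl(d(x,y_n)^p+d(x,y_m)^p\bigr)-d_0^p.
$$
As $n,m\to\infty$ the right-hand side tends to $\tfrac12(d_0^p+d_0^p)-d_0^p=0$, so $d(y_n,y_m)\to 0$ and the sequence is Cauchy. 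Using completeness (which applies to Cartan-Hadamard manifolds, the setting in which this result is used, and which must be assumed for a general Busemann $p$-space), $y_n$ converges to a limit $x_C$; since $C$ is closed, $x_C\in C$, and continuity of $d(x,\cdot)$ yields $d(x,x_C)=d_0$, proving existence.

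Uniqueness is obtained by the same inequality applied to any two minimizers: if $x_C,x_C'\in C$ both realize the distance $d_0$, the midpoint $m$ of the geodesic joining them lies in $C$, hence $d(x,m)\ge d_0$, and
$$
\frac{1}{(2K)^p}d(x_C,x_C')^p\le \frac{1}{2}\bigl(d(x,x_C)^p+d(x,x_C')^p\bigr)-d(x,m)^p\le d_0^p-d_0^p=0,
$$
forcing $x_C=x_C'$.

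The main (and essentially only) obstacle is the Cauchy step, which hinges on choosing the midpoint of the geodesic between $y_n$ and $y_m$ so that convexity of $C$ supplies the lower bound $d(x,m_{n,m})\ge d_0$; once this is in place the $p$-uniform Bruhat-Tits inequality does all the work, exactly as the parallelogram law does in Hilbert space. A small subtlety is that the midpoint is well-defined because $X$ is a geodesic length space and geodesics in a Busemann $p$-space are unique (they minimize by hypothesis and the strong convexity inequality forbids bifurcation), so $m_{n,m}$ is unambiguously placed in $C$ by convexity.
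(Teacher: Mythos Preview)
Your argument is correct and is essentially identical to the paper's own proof: both take a minimizing sequence, use the Busemann $p$-inequality at the midpoint of the geodesic between two terms (which lies in $C$ by convexity) to force the sequence to be Cauchy, and then repeat the same inequality with two putative minimizers for uniqueness. If anything, you are slightly more explicit than the paper in invoking completeness and closedness of $C$ to pass from Cauchy to an actual limit in $C$.
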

\begin{proof}
Let $D=d(C,x)$ be the distance between $C$ and $x$. Let $x_n$ be a decreasing minimizing sequence in $C$, that is $\lim\limits_{n\to \infty}d(x_n,x)=D$ and
$$
d(x_n,x)\ge d(x_{n+1},x)
$$
We claim that $\{x_n\}$ is a Cauchy sequence in $C$. Let $\gamma_{n,m}:[0,1]\to M$ be the short geodesic joining $x_n$ to $x_m$ in $M$, which is contained in $C$. Let $m>n$ and let $x_{n,m}\in C$ be the middle point of $\gamma_{n,m}$. Then by the semi-parallelogram law in Theorem \ref{para_p},
$$
\frac12 (d(x_n,x)^p+d(x_m,x)^p)- d(x_{n,m},x)^p\ge \frac{1}{(2K_M)^p}d(x_n,x_m)^p,
$$
and $D \le d(x_{n,m},x)$ since $C$ is convex, hence
$$
\frac12 (d(x_n,x)^p+d(x_m,x)^p)- D^p\ge \frac{1}{(2K_M)^p}d(x_n,x_m)^p,
$$
which proves the claim. To prove uniqueness, assume that $x^1,x^2$ are minimizers in $C$ and let $x^{12}$ be the middle point. If we replace them again in the semi-parallelogram law we obtain
\begin{eqnarray}
0  & = & \frac12 (D^p+D^p)-D^p\ge \frac12 (d(x^1,x)^p+d(x^2,x)^p)- d(x^{12},x)^p \nonumber\\
&\ge& \frac{1}{(2K_M)^p}d(x^1,x^2)^p.\nonumber
\end{eqnarray}
\end{proof}

\bigskip

Let $(X,d)$ be a Busemann $p$-space, let $x_0\in X$ and $\lambda >0$, and let $F:X\to \mathbb R\cup \{\infty\}$ be any function. We define the Moreau-Yoshida approximation $F^{\lambda}$ of $F$ as
$$
F^{\lambda}=\inf\limits_{y\in X} \;\{\lambda  F(y)+d(x_0,y)^p\}.
$$
It is not hard to see using (\ref{strongconvex}) that if $F$ is convex, lower semi-continuous, bounded from below  and not identically $+\infty$, then for every $\lambda>0$ there exists a unique $y_\lambda\in X$ such that
\begin{equation}\label{ylambda}
F^{\lambda}=\lambda\; F(y_{\lambda})+d(x_0,y_{\lambda})^p.
\end{equation}
See \cite[Lem. 3.1.2]{jost} for the details (done there for $p=2$). Then the following result, using our semi-parallelogram laws, has a proof almost identical to that in  \cite[Th. 3.1.1]{jost}, therefore we omit it. 

\begin{teo}
Let $(X,d)$ be a Busemann $p$-space, let $F:X\to \mathbb R\cup \{\infty\}$ be a convex, lower semi-continuous function that is bounded from below and not identically $+\infty$. Let $y_{\lambda}$ be constructed as in (\ref{ylambda}). If $d(x_0,y_{\lambda_n})$ is bounded for some sequence $\lambda_n\to +\infty$, then $\{y_{\lambda}\}_{\lambda>0}$ converges to a minimizer of $F$ as $\lambda\to \infty$.
\end{teo}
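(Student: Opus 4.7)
The plan is to imitate the standard argument for the $p=2$ case (Theorem 3.1.1 of \cite{jost}), systematically replacing the parallelogram law by the semi-parallelogram inequality of Theorem \ref{para_p}. I would split the proof into three steps: establish monotonicity of $F(y_\lambda)$ and $d(x_0,y_\lambda)$, derive a Cauchy estimate from the semi-parallelogram law, and then pass to the limit using lower semicontinuity.

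For the first step, write $J_\lambda(y):=\lambda F(y)+d(x_0,y)^p$, so that (\ref{ylambda}) reads $J_\lambda(y_\lambda)\le J_\lambda(y)$ for every $y\in X$. Testing $y_\lambda$ and $y_\mu$ against each other (for $\lambda<\mu$) produces the sandwich
\[
\lambda\bigl(F(y_\lambda)-F(y_\mu)\bigr)\;\le\;d(x_0,y_\mu)^p-d(x_0,y_\lambda)^p\;\le\;\mu\bigl(F(y_\lambda)-F(y_\mu)\bigr),
\]
from which $\lambda\mapsto F(y_\lambda)$ is non-increasing and $\lambda\mapsto d(x_0,y_\lambda)$ is non-decreasing. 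Testing (\ref{ylambda}) against any $z$ with $F(z)\le\inf F+\varepsilon$ further gives $F(y_\lambda)\le F(z)+d(x_0,z)^p/\lambda$, hence $F(y_\lambda)\searrow f^{\ast}:=\inf F$. Combining the monotonicity of $d(x_0,y_\lambda)$ with the hypothesis that $d(x_0,y_{\lambda_n})$ is bounded along some $\lambda_n\to\infty$ promotes the bound to the whole net, so $d(x_0,y_\lambda)\uparrow D$ for some finite $D\ge 0$.

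For the Cauchy estimate, fix $\lambda<\mu$, and let $m$ be the midpoint of the (unique) geodesic joining $y_\lambda$ to $y_\mu$. Theorem \ref{para_p} applied with $x=x_0$ reads
\[
\frac{1}{(2K)^p}\,d(y_\lambda,y_\mu)^p\;\le\;\frac{1}{2}\bigl(d(x_0,y_\lambda)^p+d(x_0,y_\mu)^p\bigr)-d(x_0,m)^p.
\]
Convexity of $F$ along geodesics yields $F(m)\le\frac{1}{2}(F(y_\lambda)+F(y_\mu))$, so the optimality $J_\lambda(y_\lambda)\le J_\lambda(m)$ forces
\[
d(x_0,m)^p\;\ge\;d(x_0,y_\lambda)^p+\frac{\lambda}{2}\bigl(F(y_\lambda)-F(y_\mu)\bigr).
\]
Substituting this into the semi-parallelogram bound and discarding the non-positive contribution $-\frac{\lambda}{2}(F(y_\lambda)-F(y_\mu))$, I obtain the clean telescoping estimate
\[
\frac{1}{(2K)^p}\,d(y_\lambda,y_\mu)^p\;\le\;\frac{1}{2}\bigl(d(x_0,y_\mu)^p-d(x_0,y_\lambda)^p\bigr),
\]
whose right-hand side vanishes as $\lambda,\mu\to\infty$ by Step~1. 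Thus $\{y_\lambda\}_{\lambda>0}$ is Cauchy in $X$.

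Under the standing completeness assumption on $(X,d)$ (automatic for Cartan--Hadamard manifolds by Theorem \ref{CH}, and implicit already in Theorem \ref{minime}), the net converges to some $y_\infty\in X$, and lower semicontinuity of $F$ gives $F(y_\infty)\le\liminf_{\lambda\to\infty}F(y_\lambda)=f^{\ast}$, so $y_\infty$ is a minimizer of $F$. The one genuinely delicate point is the interlocking of the semi-parallelogram inequality, the optimality of $y_\lambda$, and the convexity of $F$ to produce a bound purely in terms of the increments of $d(x_0,y_\lambda)^p$; the exponent $p\ge 2$ plays no more than a bookkeeping role, and no obstacle new to the Hilbertian case appears.
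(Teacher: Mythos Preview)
Your argument is correct and is precisely the adaptation of Jost's Theorem 3.1.1 that the paper has in mind; the paper itself omits the proof with the remark that it is ``almost identical'' once the semi-parallelogram law of Theorem \ref{para_p} replaces the quadratic one. Your three-step organization---monotonicity of $F(y_\lambda)$ and $d(x_0,y_\lambda)$, the Cauchy estimate via Theorem \ref{para_p} combined with convexity of $F$ and optimality of $y_\lambda$, and passage to the limit---is exactly that argument carried out in detail.
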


\subsubsection{Bruhat-Tits}\label{bruhat}

Existence and uniqueness of minimal balls is guaranteed by the generalized semi-parallelogram laws (Theorem \ref{para_p}), and from there one obtains Bruhat-Tits fixed point theorem and its usual corollaries. The proofs are straightforward and identical to the proofs of the case $p=2$ (see \cite[Section 3]{lang2} for instance), therefore we omit them. The contents of this section are related to \cite[Problem 1.3(2)]{neeb}. In these propositions $M$ is a Busemann $p$-space (in particular, a Cartan-Hadamard manifold with a $p$-uniformly convex tangent norm).

\begin{prop}
Let $S$ be a bounded subset of $M$. Then there exists a unique closed ball $B_r(s_1)\subset M$ of minimal radius $r$ containing $M$. The center $s_1\in S$ is called the circumcenter of $S$.
\end{prop}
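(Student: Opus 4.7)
The plan is to reduce the existence and uniqueness of the smallest enclosing ball to a straightforward application of the $p$-semi-parallelogram law of Theorem \ref{para_p}, imitating the argument for minimizers to a convex set (Theorem \ref{minime}).

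First I would set up the quantity to be minimized. Let
$$
r = \inf \{ \rho > 0 : \text{there exists } x \in M \text{ with } S \subset \overline{B_\rho(x)} \},
$$
which is finite since $S$ is bounded. Choose a sequence of candidate centers $x_n \in M$ with radii $r_n = \sup_{s \in S} d(s, x_n)$ satisfying $r_n \downarrow r$.

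The key step is to show that $\{x_n\}$ is Cauchy. Given $n, m$, let $\gamma_{n,m}$ be the geodesic joining $x_n$ to $x_m$ and $m_{n,m}$ its midpoint. Applying Theorem \ref{para_p} with $x = s$ for any $s \in S$, and noting $d(s, x_n) \le r_n$, $d(s, x_m) \le r_m$, one obtains
$$
d(s, m_{n,m})^p \le \frac{1}{2}(r_n^p + r_m^p) - \frac{1}{(2K_M)^p} d(x_n, x_m)^p.
$$
Taking the supremum over $s \in S$ and using that the left-hand side is bounded below by $r^p$ (by definition of $r$), one gets
$$
\frac{1}{(2K_M)^p} d(x_n, x_m)^p \le \frac{1}{2}(r_n^p + r_m^p) - r^p,
$$
and the right-hand side tends to $0$. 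Hence $\{x_n\}$ is Cauchy, and completeness of $M$ yields a limit $s_1 \in M$. Continuity of the distance gives $S \subset \overline{B_r(s_1)}$, producing the desired ball.

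For uniqueness, suppose $s_1, s_1' \in M$ both have $S \subset \overline{B_r(s_1)}\cap \overline{B_r(s_1')}$. Let $m$ be the midpoint of the geodesic joining them. Applying the semi-parallelogram law once more with $x = s \in S$ and taking the supremum over $s$ gives
$$
r^p \le \sup_{s \in S} d(s,m)^p \le r^p - \frac{1}{(2K_M)^p} d(s_1, s_1')^p,
$$
which forces $d(s_1, s_1') = 0$. I expect no serious obstacle here; the only subtlety is verifying that the midpoint $m$ is the midpoint of a \emph{geodesic} inside $M$ (not merely a metric midpoint), which is guaranteed by Theorem \ref{CH} since $M$ is a Cartan-Hadamard manifold so that the exponential at any point is a diffeomorphism (or, absent simple connectedness, a covering from which one lifts; but here one works directly on $M$ using the unique minimizing geodesic between any two points as in Remark \ref{mas}). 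Note the statement's $s_1 \in S$ should be read as $s_1 \in M$, since in general the circumcenter need not lie in $S$.
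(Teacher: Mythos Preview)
Your argument is correct and is precisely the standard proof the paper has in mind: the authors explicitly omit the proof, saying it is ``straightforward and identical to the proofs of the case $p=2$'' in Lang's book, and what you wrote is exactly that argument with the exponent $2$ replaced by $p$ via Theorem \ref{para_p}. One minor remark: since the ambient hypothesis here is that $M$ is a Busemann $p$-space (not necessarily a Cartan--Hadamard manifold), you do not need to invoke Theorem \ref{CH} for midpoints---geodesics between any two points exist by the definition of a geodesic length space, and completeness is implicitly assumed as in the standard references; your observation about the typo $s_1\in S$ (and also ``containing $M$'' should read ``containing $S$'') is well taken.
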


\begin{teo}(Bruhat-Tits)
Let $G$ be a group of isometries of $M$. Suppose that $G$ has a bounded orbit (for instance, if $G$ is discrete). Then the orbit of $G$ has a fixed point, for instance the circumcenter.
\end{teo}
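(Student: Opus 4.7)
The plan is to apply the preceding proposition on circumcenters directly to a bounded orbit of $G$, and then exploit the uniqueness of the circumcenter together with the isometry hypothesis.

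First I would fix any $x_0\in M$ whose orbit $S=G\cdot x_0$ is bounded (this is the content of the hypothesis, and in the discrete case any orbit works after replacing $S$ by its closure, which is still bounded and still $G$-invariant). By the previous proposition, applied to the bounded set $S$, there exists a unique closed ball $B_r(s_1)$ of minimal radius $r$ containing $S$; let $s_1$ be its circumcenter.

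Next I would show that $s_1$ is fixed by every $g\in G$. Since $g$ is an isometry of $M$ and $S$ is an orbit, we have $g(S)=S$. The image of the closed ball $B_r(s_1)$ under the isometry $g$ is the closed ball $B_r(g(s_1))$, and it still contains $g(S)=S$. Thus $B_r(g(s_1))$ is also a closed ball of radius $r$ containing $S$. By the minimality of $r$, this is again a minimal enclosing ball for $S$; but by the uniqueness part of the circumcenter proposition, the center of such a ball is unique, so $g(s_1)=s_1$. Hence $s_1$ is a common fixed point of $G$.

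The only subtle step is the second one: one must be sure that the proposition on minimal balls really does produce a unique center, since that is what forces $g(s_1)=s_1$ rather than merely $g(s_1)\in \{\text{centers of minimal balls}\}$. This uniqueness is in turn a consequence of the semi-parallelogram law (Theorem \ref{para_p}), exactly in the same way that uniqueness of best approximations was derived in Theorem \ref{minime}: if $s_1,s_1'$ were two distinct circumcenters and $m$ their midpoint, the inequality (\ref{strongconvex}) applied to any $y\in S$ would force $d(m,y)^p<r^p$ uniformly in $y\in S$, contradicting the minimality of $r$. Once uniqueness is in hand, the rest of the argument is a one-line consequence of the fact that $G$ acts by isometries and permutes $S$.
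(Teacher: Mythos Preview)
Your argument is correct and is exactly the standard circumcenter argument the paper has in mind: the authors omit the proof, pointing to \cite[Section 3]{lang2} and noting it is identical to the $p=2$ case, and what you wrote is precisely that argument, with the uniqueness of the circumcenter (via the semi-parallelogram law) doing all the work. One small side remark: your parenthetical about the discrete case does not actually justify boundedness of an orbit (a discrete group of isometries can have unbounded orbits, e.g.\ $\mathbb{Z}$ acting by translations on $\mathbb{R}$), but this is an issue with the paper's own parenthetical, not with your proof of the main implication.
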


\subsection{Metric splittings via convex submanifolds}\label{psplit}

In this section, we give a geometrical characterization of the best approximation $x_C\in C\subset M$, where $C$ is a convex and closed submanifold of a Cartan-Hadamard manifold $M$, and we state a straightforward splitting of $M$ via such submanifolds.

\begin{defi}
Let $X$ be a Banach space, let $S\subset X$ be a linear subspace. The Birkhoff orthogonal $S^{\perp}$ of $S$ is given by
$$
S^{\perp}=\{v\in X:\|v\|\le \|v+s\|\;\mbox{ for any }\, s\in S\}.
$$
\end{defi}

The Birkhoff orthogonal is the analogue of the usual orthogonal in Hilbert spaces. However the Birkhoff orthogonal does not necessarily have a linear structure \cite{james}.

\bigskip

\begin{rem}\label{onlyif}
Let $C\subset M$ be a convex submanifold of a Cartan-Hadamard manifold. Let $z\in C$ and let $x=\exp_z(v)$, with $v$  Birkhoff orthogonal to $T_zC$. Since  (by virtue of the convexity of $C$) for any $y\in C$ we can write $y=\exp_z(s)$, with $s\in T_zC$, then
$$
d(x,z)=\|v\|_z\le \|v-s\|_z\le d(\exp_z(v),\exp_z(s))=d(x,y)
$$
by Lemma \ref{emi}.
\end{rem}

So if $x\in M$ is reached by an orthogonal direction, then it has a closest point in $C$.

\begin{prop}
Let $C\subset M$ be a submanifold of a Cartan-Hadamard manifold $M$. Let $x\in M$ and let $z\in C$. If $z$ is the best approximation of $x$ in $C$, then the initial speed of the geodesic $\alpha$ joining $z$ to $x$ in $M$ is orthogonal to $T_zC$ in the sense of Birkhoff. In addition, if $C$ is a convex submanifold, these conditions are equivalent.
\end{prop}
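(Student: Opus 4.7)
My plan is to treat the two implications separately.

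For the converse (assuming $C$ is a convex submanifold), I would argue as in Remark \ref{onlyif}: convexity lets every $y\in C$ be written as $y=\exp_z(s)$ with $s\in T_zC$, and chaining Birkhoff orthogonality of $v$ with Lemma \ref{emi} yields
\[
d(x,z)=\|v\|_z\le\|v-s\|_z\le d(\exp_z v,\exp_z s)=d(x,y),
\]
so $z$ is the best approximation.

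For the forward direction, fix $s\in T_zC$ and pick a smooth curve $\beta:(-\varepsilon,\varepsilon)\to C$ with $\beta(0)=z$ and $\dot\beta(0)=-s$, using that $C$ is a submanifold. Since $\exp_z$ is a diffeomorphism by Theorem \ref{CH}, I lift $\beta$ to a smooth curve $\Gamma(\tau)=\exp_z^{-1}(\beta(\tau))\in T_zM$ with $\Gamma(0)=0$ and $\dot\Gamma(0)=-s$. Lemma \ref{emi} gives the lower bound
\[
d(x,\beta(\tau))\ge\|v-\Gamma(\tau)\|_z=\|v+\tau s\|_z+o(\tau),
\]
and if one can pair it with the matching first-order upper bound $d(x,\beta(\tau))\le\|v+\tau s\|_z+o(\tau)$, then the minimality hypothesis $d(x,\beta(\tau))\ge\|v\|_z$ forces $\|v+\tau s\|_z\ge\|v\|_z-o(\tau)$, whence the right Gateaux derivative $D^+_s\|v\|_z\ge 0$. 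Applying the same argument with $-s$ in place of $s$ yields $D^+_{-s}\|v\|_z\ge 0$, and convexity of $t\mapsto\|v+ts\|_z$ then gives $\|v\|_z\le\|v+ts\|_z$ for every $t\in\mathbb R$, the Birkhoff orthogonality of $v$ to $T_zC$.

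The main obstacle is establishing the first-order upper bound $d(\exp_z v,\exp_z w)\le\|v-w\|_z+o(\|w\|)$ as $w\to 0$ in $T_zM$, a Finsler counterpart of the Riemannian Gauss lemma. Lemma \ref{emi} supplies only the opposite inequality, and the obvious comparison curve $r\mapsto\exp_z((1-r)v+rw)$ has length bounded below (not above) by $\|v-w\|_z$ precisely because of (\ref{semi}). I would attempt this upper bound through a more delicate comparison curve exploiting the smoothness of $\exp_z$ near $v$ and the Finsler-spray structure; alternatively, one can try to bypass the issue entirely by lifting to $T_xM$ through the diffeomorphism $\exp_x^{-1}$, where the identity $d(x,y)=\|\exp_x^{-1}(y)\|_x$ holds exactly, deducing a Birkhoff orthogonality in $T_xM$ directly and then transferring it to $T_zM$ via the norm-preserving parallel transport along $\alpha$.
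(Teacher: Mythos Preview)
Your treatment of the converse is fine and matches Remark~\ref{onlyif}. The forward direction is where the difficulty lies, and your proposal leaves a genuine gap.

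Your first route (lifting to $T_zM$ via $\exp_z^{-1}$) founders exactly where you say it does: the first-order upper bound $d(\exp_z v,\exp_z w)\le\|v-w\|_z+o(\|w\|_z)$ is a Finsler Gauss lemma that you have no mechanism to prove here. The semi-negative curvature hypothesis~(\ref{semi}) points the inequality the wrong way, and a ``more delicate comparison curve'' will not repair this: any curve you build in $M$ from data in $T_zM$ is pushed forward through $(\exp_z)_*$, which is expansive, so its length is bounded \emph{below} by the $T_zM$-length. There is no cheap upper bound available from this side.

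Your alternative---lifting through $\exp_x^{-1}$ instead---is the right idea, and it is what the paper does; but the transfer step is more subtle than you indicate. Lifting gives $g(t)=d(x,\beta(t))=\|\exp_x^{-1}(\beta(t))\|_x$ exactly, and minimality at $t=0$ yields $\|v\|_z\le\|{-v}+A_vw\|_z$ for every $w\in T_zC$, where $A_v=P_x^z\circ(\exp_x^{-1})_{*z}$. This says $v$ is Birkhoff orthogonal to $A_v(T_zC)$, not to $T_zC$; parallel transport is an isometry but does \emph{not} carry $(\exp_x^{-1})_{*z}(T_zC)$ back to $T_zC$. The missing ingredient is twofold: (i) the observation that $A_vv=v$ (because the geodesic $\alpha$ is its own reparametrization when viewed from either endpoint), and (ii) that $A_v$ is a contraction by~(\ref{semi}). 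Together these give
\[
\|v\|_z\le\|{-v}+A_vw\|_z=\|A_v(-v+w)\|_z\le\|v-w\|_z,
\]
which is the Birkhoff orthogonality you want. Without identifying this fixed-point-plus-contraction trick, the $T_xM$ route remains incomplete.
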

\begin{proof}
Assume that $d(z,x)\le d(y,x)$ for any $y\in C$, and let us put $v=\dot{\alpha}(0)$, where $\alpha$ is the short geodesic joining $z$ to $x$ in $M$. Let $\tilde{\alpha}(t)=\alpha(1-t)$ be the geodesic joining $x$ to $z$. Then $\tilde{\alpha}(t)=\exp_x(t\exp_x^{-1}(z))$ by the uniqueness of geodesics, hence $\exp_x^{-1}(z)=\dot{\tilde{\alpha}}(0)=-\dot{\alpha}(1)$, and also $P_z^x(\alpha)(v)=\dot{\alpha}(1)$ by parallel translation properties. Hence $P_z^x(\alpha)v=-\exp_x^{-1}(z)$.

Let $A_v:T_zM\to T_zM$ be the linear isomorphism given by $P_x^z (\exp_x^{-1})_{*z}$ (recall that $x=\exp_z(v)$). Then $A_v$ is a contraction by the nonpositive curvature condition (\ref{semi}), and we claim that $A_vv=v$: the curve $\gamma(t)=\exp_x(P_z^x(t-1)v)$ is a geodesic of $M$ with initial data $\gamma(0)=z$ and $\gamma(1)=x$, hence $\gamma(t)=\alpha(t)$ and then
$$
v=\dot{\gamma}(0)=(\exp_x)_{*-P_z^xv}P_z^xv,
$$
hence
$$
P_x^z (\exp_x^{-1})_{*z}v=v.
$$
Let $w\in T_zC$, and let $\beta\subset C$ be any smooth curve such that $\beta(0)=z$, $\dot{\beta(0)}=w$. Consider the convex function $g(t)=d(\beta(t),x)$: if $z$ is the best approximation of $x$ in $C$, then $g'(0^+)\ge 0$. Let $v_t=\exp_x^{-1}(\beta(t))$. Then
\begin{eqnarray}
g(t) &=&\|v_t\|_x=\|P_x^z(\tilde{\alpha})v_t\|_z=\|-v+A_vwt+o(t^2)\|_z \nonumber \\
&\le & \|-v+A_vwt\|_z+o(t^2),
\end{eqnarray} 
since $P_x^z(\tilde{\alpha})v_0=P_x^z(\tilde{\alpha})\exp_x^{-1}(z)=-v$ and $\dot{v}_0=(\exp_x^{-1})_{*z}w$. Now, since $g(0)=\|v\|_z$,   
$$
0\le g'(0^+)\le \|-v+A_vw\|_z-\|v\|_z
$$
by the convexity of the norm, hence $\|v\|_z\le \|-v+A_vw\|_z$ for any $w\in T_zC$. Then, since $v=A_vv$,
$$
\|v\|_z\le \|-A_vv+A_vw\|_z=\|A_v(-v+w)\|_z\le \|v-w\|_z
$$
because $A_v$ is a contraction, and this shows that $v$ is Birkhoff orthogonal to $T_zC$. The last assertion of the proposition follows from Remark \ref{onlyif}.
\end{proof}

\begin{rem}
In \cite{pr}, Porta and Recht prove a splitting theorem for inclusions $N\subset M$ of $C^*$-algebras. In their proof, a key element is the natural linear supplement of the tangent spaces of the submanifold, given by a conditional expectation $E:M\to N$. However, in the setting of $p$-uniformly convex Banach spaces it is natural to replace linear supplements with the Birkhoff orthogonal.
\end{rem}

Since the orthogonal directions in the tangent bundle play a relevant role,  we define \textit{the normal of $C$}
$$
{\mathfrak N}_C=\{(x,v):x\in C,\;v\in T_xC^{\perp}\}\subset TM.
$$ 

We use $\exp:TM\to M$, $\exp(x,v)=\exp_x(v)$ to denote the exponential map of $M$.

\begin{teo}\label{teopconvex}
Let $M$ be a $p$-uniformly convex Cartan-Hadamard manifold, and let $C$ be a convex closed submanifold. Then $\exp:{\mathfrak N}_C\to M$ is a bijection which induces a differentiable structure on ${\mathfrak N}_C$ which makes it diffeomorphic to $M$.
\end{teo}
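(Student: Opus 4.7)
The plan is to reduce the whole statement to the two key facts already proved: existence and uniqueness of best approximations on convex closed sets (Theorem \ref{minime}), and the characterization of these best approximations via Birkhoff-orthogonal geodesics (the preceding proposition together with Remark \ref{onlyif}). Once $\exp$ is shown to be a bijection, the differentiable structure on ${\mathfrak N}_C$ will just be the pullback via $\exp$ of the smooth structure of $M$, and diffeomorphy will be tautological.

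First I would prove surjectivity. Given $x\in M$, apply Theorem \ref{minime} to obtain a unique $z\in C$ which is the best approximation of $x$ in $C$. By Theorem \ref{CH}, the exponential map $\exp_z:T_zM\to M$ is a diffeomorphism, so there is a unique $v\in T_zM$ with $\exp_z(v)=x$, and the curve $t\mapsto\exp_z(tv)$ is the unique short geodesic joining $z$ to $x$. The preceding proposition then tells us that the initial velocity $v$ is Birkhoff orthogonal to $T_zC$, hence $(z,v)\in{\mathfrak N}_C$ and $\exp(z,v)=x$.

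Next I would handle injectivity. Assume $(z_1,v_1),(z_2,v_2)\in{\mathfrak N}_C$ both satisfy $\exp_{z_i}(v_i)=x$. Because each $v_i$ is Birkhoff orthogonal to $T_{z_i}C$, Remark \ref{onlyif} shows that each $z_i$ realizes $d(C,x)$. The uniqueness part of Theorem \ref{minime} yields $z_1=z_2=:z$. Then $\exp_z(v_1)=\exp_z(v_2)$, and since $\exp_z:T_zM\to M$ is a diffeomorphism (Theorem \ref{CH}), we conclude $v_1=v_2$.

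Finally, for the differentiable structure: having established the bijection, declare a subset $U\subset{\mathfrak N}_C$ open (resp. a chart) if and only if $\exp(U)$ is open (resp. a chart) in $M$. With this structure $\exp:{\mathfrak N}_C\to M$ is by construction a diffeomorphism. The main obstacle, to the extent there is one, is making sure the previous proposition and Remark \ref{onlyif} apply verbatim to the closed convex submanifold $C$; in particular one needs that every pair of points in $C$ is joined by a short geodesic lying in $C$ (used implicitly in the best-approximation argument), which is precisely the convexity assumption on $C$, and that $p$-uniform convexity guarantees strict convexity of the tangent norms so that the short geodesic from $z$ to $x$ is unique (Proposition \ref{uni}).
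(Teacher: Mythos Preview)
Your proof is correct and follows essentially the same route as the paper: surjectivity via Theorem \ref{minime} plus the preceding proposition, and the pullback smooth structure for the diffeomorphism. The only cosmetic difference is in injectivity, where you invoke the uniqueness clause of Theorem \ref{minime} directly, while the paper re-runs that argument by applying the semi-parallelogram law (\ref{strongconvex}) to the midpoint of the geodesic in $C$ joining the two candidate foot-points; these are the same computation packaged differently.
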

\begin{proof}
Let $x\in M$, let $z\in C$ be the unique minimizer (Theorem \ref{minime}), with $D=d(x,C)=d(x,z)$. Let $\alpha$ be the unique geodesic in $M$ joining $z$ to $x$. Let $v$ be the initial speed of $\alpha$, then $x=\exp_z(v)$. Note that $\|v\|_z=D$, and also that $v$ is Birkhoff orthogonal to $T_zC$ by the previous proposition, hence $x=\exp(z,v)$ and the map  $\exp$ is surjective. On the other hand, assume that $M\ni x=\exp_y(w)=\exp_z(v)$ with $(z,v),(y,w)\in {\mathfrak N}_C$. Let $D=d(x,C)=\|v\|_z=\|w\|_y$. Then by convexity $d(x,\gamma_{1/2})\ge D$, and by inequality (\ref{strongconvex}),
\begin{eqnarray}
\frac{1}{(2K_M)^p}d(y,z)^p & \le & \frac{1}{2}(d(x,y)^p+d(x,z)^p)-d(x,\gamma_{1/2})^p\nonumber\\
&=&\frac12 D^p+\frac12 D^p-d(x,\gamma_{1/2})^p\le 0,\nonumber
\end{eqnarray}
hence $y=z$ so $\exp$ is injective. With the induced differentiable structure, $\exp$ is a global isomorphism onto $M$, since its differential is everywhere invertible by hypothesis.
\end{proof}

\begin{coro}\label{spliteometrico}
Let $C\subset M$ be a convex closed submanifold of a $p$-uniformly convex Cartan-Hadamard manifold, and let $x\in M$. Then there exists a unique $z\in C$ and $v\in T_zC^{\perp}$ such that $\|v\|_z=d(x,C)$ and $x=\exp_z(v)$.
\end{coro}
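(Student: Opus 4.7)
The plan is to assemble the corollary directly from the three preceding results: Theorem \ref{minime} (existence/uniqueness of best approximations in convex closed sets of a Busemann $p$-space), the proposition characterizing best approximations via Birkhoff orthogonality, and Theorem \ref{teopconvex} (bijectivity of $\exp:{\mathfrak N}_C\to M$). In fact the corollary is little more than a repackaging of these facts in a form convenient for later reference, so the work has essentially been done; the only task is to identify the components of the normal pair and verify the norm condition.

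First I would invoke Theorem \ref{minime}, noting that a $p$-uniformly convex Cartan-Hadamard manifold is a Busemann $p$-space by Theorem \ref{para_p}, so that every point $x\in M$ admits a unique best approximation $z\in C$ with $d(x,z)=d(x,C)$. Then I would let $\alpha:[0,1]\to M$ be the unique short geodesic from $z$ to $x$ (unique by Proposition \ref{uni}, since the tangent norm is strictly convex), and set $v=\dot\alpha(0)\in T_zM$. By the preceding proposition, the fact that $z$ is the closest point of $C$ to $x$ forces $v\in T_zC^{\perp}$, so that $(z,v)\in {\mathfrak N}_C$. Moreover $x=\exp_z(v)$ and, since $\alpha(t)=\exp_z(tv)$ is a geodesic,
\[
\|v\|_z=L(\alpha)=d(z,x)=d(x,C),
\]
which gives existence.

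For uniqueness I would appeal to Theorem \ref{teopconvex}: the map $\exp:{\mathfrak N}_C\to M$ is a bijection, so any other pair $(z',v')\in {\mathfrak N}_C$ with $\exp_{z'}(v')=x$ must coincide with $(z,v)$. Alternatively, uniqueness can be recovered directly from Remark \ref{onlyif}, which shows that any such pair produces a point $z'\in C$ realizing $d(x,C)$, whence $z'=z$ by Theorem \ref{minime}, and then $v'=v$ because $\exp_z$ is injective on $T_zM$ by the Cartan-Hadamard Theorem \ref{CH}.

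There is no real obstacle here; the only thing to be careful about is to cite the correct hypothesis at each step, in particular that strict convexity of the tangent norms (ensured by $p$-uniform convexity) is what legitimizes the use of Proposition \ref{uni} for the uniqueness of the geodesic $\alpha$, and that Theorem \ref{teopconvex} already packages the bijectivity we need. Thus the proof can be written in just a few lines: existence by concatenating Theorems \ref{minime} and the orthogonality proposition, uniqueness by Theorem \ref{teopconvex}.
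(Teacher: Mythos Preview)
Your proposal is correct and matches the paper's approach: the corollary is stated there without proof, as an immediate consequence of Theorem \ref{teopconvex}, whose surjectivity argument already invokes Theorem \ref{minime} and the Birkhoff-orthogonality proposition in exactly the way you describe. The only addition you make explicit is the norm identity $\|v\|_z=d(x,C)$, which is indeed automatic from the fact that $\alpha(t)=\exp_z(tv)$ is the short geodesic to $x$.
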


\section{Homogeneous spaces}\label{homogeneous}

In this section we assume that $M\simeq G/K$ is an homogeneous reductive space, quotient of Banach-Lie groups. First we recall the basic facts, and include some elementary considerations for the benefit of the reader.

\smallskip

A Banach-Lie group $G$ with an involutive automorphism $\sigma$ is called a \textit{symmetric Lie group} in \cite{neeb}. Let $\g$ be the Banach-Lie algebra of $G$, and let $K=G^{\sigma}=\{g\in G:\sigma(g)=g\}$ be the subgroup of $\sigma$-fixed points. Then the Banach-Lie algebra $\k$ of $K$ is a closed complemented subspace of $\g$; the complement is given by the closed subspace
$$
\p=\{v\in \g : \sigma_{*1}v=-v\},
$$
since the Lie algebra $\k$ matches the set of $\sigma_{*1}$-fixed points. Hence $K$ is a Banach-Lie subgroup of $G$, and the quotient space $M=G/K$ carries the structure of a Banach manifold. We indicate with $q:G\to M$, $g\mapsto gK$ the quotient map and with $Exp:\g\to G$ the exponential map of $G$. We use the short notation $e^v=Exp(v)$ for $v\in \g$ whenever it is possible. Then $q\circ Exp:\p\to M$ is the natural chart around $o=q(1)\in M$ given by the exponential map of $G$, $q\circ Exp=\exp_o\circ q_{*1}$, and a general geodesic of $M=G/K$ is given by
$$
\alpha(t)=ge^{tv}K=q(ge^{tv})
$$
for some $v\in \p$. Note that in particular $M$ is geodesically complete.

Let $h\in G$, let $\mu_h:M\to M$ stand for $\mu_h(q(g))=q(hg)=q(L_hg)$. Then
$$
(\mu_h)_{*q(g)} q_{*g}=q_{*hg}(L_{h})_{*g}.
$$
A generic point in $M$ will be denoted by $q(g)$ for $g\in G$, and we will identify $\p$ with $T_oM$ so a generic vector in $T_{q(g)}M$ will be indicated by $(\mu_g)_{*o} v$ for $v\in \p$. 

We  use $Ad_k$ to denote both the automorphism of $g$ given by $Ad_k(g)=kgk^{-1}$, and also its differential $(Ad_k)_{*1}$ which is an element of ${\cal B}(\g)$, the bounded linear operators acting on $\g$. Note that $\sigma(Ad_ke^{tv})=Ad_k e^{-tv}$ for any $v\in \p$, $k\in K$, so $\sigma_{*1}Ad_k v=-Ad_k v$, hence $\p$ is $Ad_K$-invariant.

\smallskip

\begin{rem}\label{adk}
Since $\sigma$ is a group automorphism, $\sigma_{*1}$ is a Lie-algebra homomorphism, and the relations
$$
[\k,\k]\subset \k,\quad [\k,\p]\subset \p,\quad [\p,\p]\subset \k
$$
follow. In particular, $\p$ is $\ad_{\k}$-invariant as mentioned.

%If $K$ is connected, any element in $K$ can be written as a finite product $k=\prod e^{k_i}$ with $k_i\in \k$. 

The bundle $G\times_K \p$ identifies with $TM$ via $(g,v)\mapsto (q(g),(\mu_g)_{*o}v)$,  the action of $K$ is given by $(g,v)\mapsto (gk^{-1},Ad_kv)$.

Assume that $f=gk$ for some $k\in K$, let $x=q(g)=q(f)$, and assume $(\mu_g)_{*o}v = (\mu_{gk})_{*o}w\in T_xM$. From 
\begin{eqnarray}
(\mu_g)_{*o}v &=& (\mu_{gk})_{*o} w=\frac{d}{dt}|_{t=0} q(gke^{tw})\nonumber\\
&=&\frac{d}{dt}|_{t=0} q(gAd_ke^{tw})=\frac{d}{dt}|_{t=0} q(ge^{tAd_kw})=(\mu_g)_{*o} Ad_k w\nonumber
\end{eqnarray}
we obtain $v=Ad_kw$. These considerations indicate that a natural way to make of $M$ a Finsler manifold is by 
$$
\|(\mu_g)_{*o}v\|_{q(g)}:=\|v\|_{\p}
$$
where $\|\cdot\|_{\p}$ is any $Ad_k$-invariant norm on $\p$. This definition makes parallel translation isometric, since from \cite[p. 135]{neeb} follows that parallel translation along a geodesic $\alpha(t)=q(ge^{tv})$ is given by
$$
P_{0}^{t}(\alpha) =(\mu_{ge^{tv}g^{-1}})_{*q(g)}.
$$
Then the maps $\mu_h:M\to M$ are tautologically isometries since
$$
(\mu_h)_{*q(g)}(\mu_g)_{*o}=(\mu_{hg})_{*o},
$$
and the set $I(G)=\{\mu_g\}_{g\in G}$ is a subgroup of the path-component of the identity of $Aut(M)$ which acts transitively on $M$.
\end{rem}

\begin{rem}\label{expogrupo}
Assume that $G$ is connected. Then $M=G/K$ is a connected and geodesically complete Finsler manifold with spray. Assume that $M$ has semi-negative curvature, and let $\exp:TM\to M$, $(g,v)\to \exp_{q(g)}((\mu_g)_{*o}v)=q(ge^{v})$ stand for the exponential map of $M$, where we identified $TM$ with $G\times_K \p$. In this context, Theorem \ref{CH} says that any element $x\in M$ can be written as $x=q(g e^v)$ for some $v\in \p$.

\begin{rem}\label{expodif}
From now on, whenever it is possible, we shall omit the isomorphism $(\mu_g)_{*o}$ which identifies $\p$ with $T_xM$ when $x=q(g)$, and write $\exp_x(v)=q(ge^v)$ for $x\in M$ and $v\in \p$ when there is no possibility of confusion.
\end{rem}

Let ${\cal B}(\p)$ stand for the bounded linear operators of $(\p,\|\cdot\|_{\p})$. In \cite[Lemma 3.10]{neeb}, the formula for the differential of the exponential map is computed in an homogeneous space. Let $F(z)=z^{-1}\sinh z$, and recall the usual expression for the exponential of the differential map
$$
Exp_{*v}=(L_{e^v})_{*1}\left(\frac{1-e^{-\ad v}}{\ad v} \right)
$$
for $v\in \g$ the Banach-Lie algebra of a Banach-Lie group $G$. Then
$$
(\exp_o)_{*v}=(\mu_{e^v})_{*o}\frac{\sinh\ad v}{\ad v}=(\mu_{e^v})_{*o}F(\ad v)
$$
for any $v\in \p$, since $q_{*e^v}=(\mu_{e^v})_{*o}q_{*1}$, and $q_{*1}$ is essentially the identity on $\p$ and has kernel $\k$. 
\end{rem}

Let us recall some related results for our general framework.

\begin{rem}\label{disi}
If $Z$ is a Banach space, an operator $A\in {\cal B}(Z)$ is called \textit{dissipative} if 
$$
{\mathfrak Re}\; \varphi(Az)\le 0
$$
for some (or equivalently, any) $\varphi\in Z^*$ such that $\varphi(z)=\|z\|$, $\|\varphi\|=1$. This condition is equivalent to the fact that $1-sA$ is expansive and invertible for any $s>0$ \cite{lumer}. 
\end{rem}

What follows is a useful semi-negative curvature criterion for homogeneous spaces, \cite[Prop. 3.15 and Th. 2.2]{neeb}:

\begin{prop}\label{criterio}
Let $M=G/K$ be an homogeneous space with a norm $\|\cdot\|_{\p}:\p\to\mathbb R_{\ge 0}$ which is $Ad_K$-invariant, so $M$ can be regarded as a Finsler manifold. Then the following are equivalent
\begin{enumerate} \item $M$ has semi-negative curvature.
\item For each $v\in \p$, the operator $T_v=-(ad_v)^2|_{\p}$ is dissipative.
\item For each $v\in \p$, the operator $1+(ad_v)^2|_{\p}$ is expansive and invertible.
\item For each $v\in \p$,  $F(\ad v)=\frac{\sinh\ad v}{\ad v}\vrule_{\,\p}$ is expansive and invertible in $\p$.
\end{enumerate}\end{prop}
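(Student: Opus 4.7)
The plan is to establish (1) $\Leftrightarrow$ (4), (2) $\Leftrightarrow$ (3), and (3) $\Leftrightarrow$ (4), which together yield the full equivalence. By the $G$-homogeneity of $M$ and the fact that the translations $\mu_h$ act isometrically (Remark \ref{adk}), it suffices to verify the semi-negative curvature condition (\ref{semi}) at the base point $o=q(1)$. Substituting the formula $(\exp_o)_{*v}=(\mu_{e^v})_{*o}\circ F(\ad v)|_{\p}$ from Remark \ref{expodif} and using that $(\mu_{e^v})_{*o}:\p\to T_{q(e^v)}M$ is a surjective isometry, condition (1) is equivalent to $F(\ad v)|_{\p}$ being invertible and expansive on $\p$ for every $v\in\p$, which is exactly (4). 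Note that $F(z)=\sinh z/z$ is even, so $F(\ad v)$ is a power series in $(\ad v)^2$ and hence sends $\p$ into $\p$, making the restriction well-defined.

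For (2) $\Leftrightarrow$ (3) I would apply the characterization of dissipativity from Remark \ref{disi} to $T_v=-(\ad v)^2|_{\p}$: dissipativity is equivalent to $1-sT_v=1+s(\ad v)^2|_{\p}$ being expansive and invertible for every $s>0$. Taking $s=1$ gives (3); conversely, since $\sqrt{s}\,v\in\p$ whenever $v\in\p$, applying (3) with $v$ replaced by $\sqrt{s}\,v$ recovers the general $s>0$ case.

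The equivalence (3) $\Leftrightarrow$ (4) rests on the Weierstrass factorization $\sinh z/z=\prod_{n\geq 1}(1+z^2/(n\pi)^2)$, which converges uniformly on bounded subsets of $\mathbb C$. Applied to the bounded operator $\ad v$ and restricted to $\p$, this yields
\[
F(\ad v)\big|_{\p}\;=\;\prod_{n\geq 1}\left(1+\frac{(\ad v)^2|_{\p}}{(n\pi)^2}\right).
\]
If (3) holds, apply it to $v/(n\pi)\in\p$ to see that each factor is expansive and invertible, and the infinite product inherits both properties, giving (4). Conversely, for (4) $\Rightarrow$ (2), I would scale: by (4) the operator $F(s^{1/2}\ad v)|_{\p}=1+\frac{s}{6}(\ad v)^2|_{\p}+O(s^2)$ is expansive for every $s>0$. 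Fixing $w\in\p$, the inequality $\|w+\frac{s}{6}(\ad v)^2 w+O(s^2)\|_{\p}\geq\|w\|_{\p}$, divided by $s$ and passed to the limit $s\to 0^+$, shows that the right directional derivative of the norm at $w$ in the direction $(\ad v)^2 w$ is nonnegative. The standard Hahn-Banach-based duality for this one-sided derivative then supplies a supporting functional $\varphi\in\p^*$ at $w$ with $\mathrm{Re}\,\varphi((\ad v)^2 w)\geq 0$, which is Lumer's dissipativity condition for $T_v$.

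The main obstacle is this last step, passing from expansiveness at all scales to Lumer-dissipativity; it demands care with the nondifferentiability of the Banach norm and a verification that the $O(s^2)$ remainder does not affect the right limit. The infinite product in the Weierstrass step also needs a brief argument for operator-norm convergence on bounded sets, together with the observation that expansiveness is closed under such limits, but these are routine.
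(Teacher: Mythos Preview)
Your argument is correct. The paper does not actually prove this proposition; it is quoted verbatim from \cite[Prop.~3.15 and Th.~2.2]{neeb}, so there is no ``paper's own proof'' to compare against. That said, your outline matches the standard route and uses precisely the tools the paper invokes elsewhere: the formula $(\exp_o)_{*v}=(\mu_{e^v})_{*o}F(\ad v)$ from Remark~\ref{expodif} together with the isometry of $(\mu_{e^v})_{*o}$ gives $(1)\Leftrightarrow(4)$; the Lumer--Phillips characterization in Remark~\ref{disi} plus the scaling $v\mapsto\sqrt{s}\,v$ gives $(2)\Leftrightarrow(3)$; and the Weierstrass product $F(z)=\prod_{n\ge1}(1+z^2/(n\pi)^2)$ (used later in the paper, e.g.\ in the proof of Proposition~\ref{stric} and in Remark~\ref{cosh}) handles $(3)\Rightarrow(4)$.

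Two small remarks on the points you flagged. For the infinite product, invertibility of the limit is not automatic from expansiveness alone, but here the partial products $P_N$ have $\|P_N^{-1}\|\le1$, so for any $y$ the sequence $x_N=P_N^{-1}y$ satisfies $\|P(x_N-x_M)\|\le\|Px_N-y\|+\|Px_M-y\|\to0$; expansiveness of $P$ then forces $x_N$ to be Cauchy, and its limit is a preimage of $y$. For $(4)\Rightarrow(2)$, your limiting argument is fine: the one-sided directional derivative of the norm at $w$ in direction $h$ equals $\max\{\mathrm{Re}\,\varphi(h):\varphi\in\p^*,\ \|\varphi\|=1,\ \varphi(w)=\|w\|\}$ (this is the subdifferential description the paper cites from \cite{aubin} in Section~\ref{curvat}), and the $O(s^2)$ term disappears after the triangle inequality, so the existence of a norming functional with $\mathrm{Re}\,\varphi((\ad v)^2w)\ge0$ follows, which is exactly the ``for some'' version of dissipativity in Remark~\ref{disi}.
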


\begin{rem}\label{cosh}
By mimicking the proof of \cite[Prop. 3.15]{neeb}, it is not hard to see that any entire function $G$, with purely imaginary roots and such that $G(0)=1$ induces by functional calculus a bounded operator $G(\ad v)\in {\cal B}(\p)$, and this operator is invertible and expansive, in particular its inverse is a contraction. We will use this fact repeatedly for $G(z)=\cosh(z)$. See \cite{ineqlaro} for further details on this technique.
\end{rem}

We recall two more results on the fundamental group of $M$ and polar decompositions from \cite[Th. 3.14 and Th. 5.1]{neeb}

\begin{teo}\label{fibras}
Let $(G,\sigma)$ be a connected symmetric Banach-Lie group, $K=G^{\sigma}$ the subgroup of $\sigma$-fixed points. If $M=G/K$ has semi-negative curvature, then 
\begin{enumerate}
\item The exponential map $q\circ Exp:\p\to M$ is a covering of Banach manifolds and 
$$
\Gamma=\{z\in \p: q(e^z)=q(1)\}
$$
is a discrete additive subgroup of $\p \cap Z(\g)$, with $\Gamma\simeq \pi_1(M)$ and $M\simeq \p/\Gamma$. Here $Z(\g)$ denotes the center of the Banach-Lie algebra $\g$. If $v,w\in\p$ and $q(e^v)=q(e^w)$, then  $v-w\in\Gamma$.
\item The polar map $m:\p\times K \to G$, given by $(v,k)\mapsto  e^v k$ is a surjective covering map whose fibers are given by the sets $\{ (v-z,e^z k):\, v\in \p,\, z\in \Gamma, \, k\in K\}$.
\end{enumerate}
\end{teo}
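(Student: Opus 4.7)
The plan is to apply the Cartan-Hadamard theorem (Theorem \ref{CH}) to identify $\p$ with the universal cover of $M$, then establish centrality of $\Gamma$, and finally use both facts to derive the polar map properties. To begin, I would note that under the natural identification $\p\simeq T_oM$ given by $q_{*1}$ (whose kernel is $\k$), the relation $\exp_o\circ q_{*1}|_\p = q\circ Exp$ holds, since both sides send $v\in\p$ to the endpoint $q(e^v)$ of the geodesic $t\mapsto q(e^{tv})$. Since $M=G/K$ is connected and geodesically complete, Theorem \ref{CH} makes $\exp_o$ a surjective covering, and hence so is $q\circ Exp:\p\to M$. As $\p$ is a Banach space, it is contractible, hence simply connected, hence the universal cover of $M$; therefore $\pi_1(M)\simeq\Gamma$, where $\Gamma$ is the fiber over $o$, which is necessarily discrete.

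The main obstacle will be to prove $\Gamma\subset Z(\g)$. My approach is: first show $[z,\p]=0$ using only that $e^z\in K$ preserves $\p$ (combined with semi-negative curvature); then conclude $\mu_{e^z}=\mathrm{id}_M$; then use this to deduce $[z,\k]=0$ via the discreteness of $\Gamma$. In detail: fix $z\in\Gamma$. Since $e^z\in K$ preserves $\p$ by Remark \ref{adk}, and $[\p,\p]\subset\k$, $[\p,\k]\subset\p$, for $w\in\p$ the decomposition $Ad_{e^z}w=\cosh(\ad z)w+\sinh(\ad z)w$ has its $\cosh$-part in $\p$ and its $\sinh$-part in $\k$. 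The constraint $Ad_{e^z}w\in\p$ forces $\sinh(\ad z)|_\p\equiv 0$; factoring $\sinh(\ad z)|_\p=(\ad z)\circ F(\ad z)|_\p$ and invoking invertibility of $F(\ad z)|_\p$ on $\p$ (Proposition \ref{criterio}) yields $[z,\p]=0$. Consequently $Ad_{e^z}|_\p=\cosh(\ad z)|_\p=\mathrm{id}_\p$, so $\mu_{e^z}$ is an isometry fixing $o$ with trivial differential; it therefore carries every geodesic $t\mapsto\exp_o(tv)$ to itself, and since $\exp_o$ is surjective, $\mu_{e^z}=\mathrm{id}_M$. Thus $g^{-1}e^zg\in K$ for all $g\in G$; taking $g=e^{-s\xi}$ with $\xi\in\k$ gives $Ad_{e^{s\xi}}z\in\Gamma$ for all $s\in\mathbb{R}$, which must be constant (being a continuous curve in a discrete set). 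Differentiating at $s=0$ yields $[\xi,z]=0$, so $z\in Z(\g)\cap\p$.

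Centrality immediately makes $\Gamma$ an additive subgroup (since $e^{z+z'}=e^ze^{z'}\in K$ for commuting $z,z'$) and makes translation by $z\in\Gamma$ a deck transformation, as $q(e^{v+z})=q(e^ve^z)=q(e^v)$; since deck transformations of a connected covering are determined by their value at one point, these translations exhaust the deck group, giving $M\simeq\p/\Gamma$ and the final assertion $q(e^v)=q(e^w)\Rightarrow v-w\in\Gamma$. For Part 2, surjectivity of $m$ is immediate from Part 1, and the fiber computation is routine: $e^vk=e^{v'}k'$ iff $e^{v-v'}=k'k^{-1}\in K$ iff $z:=v-v'\in\Gamma$ and $k'=e^zk$. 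For the covering property, I would show $m$ is a local diffeomorphism by computing $dm_{(v,k)}$ in the left trivialization $TG\simeq G\times\g$, obtaining $(\xi,\eta')\mapsto Ad_{k^{-1}}\!\left(\frac{1-e^{-\ad v}}{\ad v}\xi\right)+\eta'$ for $\xi\in\p$, $\eta'\in\k$; decomposing the range along $\p\oplus\k$ and using $Ad_K$-invariance reduces invertibility to that of $F(\ad v)|_\p$, which again holds by Proposition \ref{criterio}. Finally, $\Gamma$ acts on $\p\times K$ by $z\cdot(v,k)=(v-z,e^zk)$ freely and properly discontinuously (as $\Gamma\subset\p$ is discrete), so $m$ factors through $(\p\times K)/\Gamma\simeq G$ and is a covering with the asserted fibers.
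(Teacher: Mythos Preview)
The paper does not give its own proof of this theorem: it is quoted verbatim from Neeb \cite[Th.~3.14 and Th.~5.1]{neeb}, so there is nothing to compare against at the level of argument. Your proposal is a correct self-contained proof, and its structure (Cartan--Hadamard to get the covering, then centrality of $\Gamma$ via the dissipativity/invertibility criterion of Proposition~\ref{criterio}, then the polar map as a local diffeomorphism via the same criterion) is exactly the circle of ideas used in Neeb's paper.

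One small point of order in the fiber computation for Part~2: you write ``$e^vk=e^{v'}k'$ iff $e^{v-v'}=k'k^{-1}\in K$''. The forward implication is not immediate as stated, since $e^{-v'}e^v\neq e^{v-v'}$ before you know that $v$ and $v'$ commute. The clean order is: from $e^vk=e^{v'}k'$ first deduce $q(e^v)=q(e^{v'})$, hence $z:=v-v'\in\Gamma$ by Part~1; \emph{then} centrality of $z$ gives $e^{-v'}e^v=e^{z}$, and the rest follows. This is only a reordering, not a gap.
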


\subsection{Local metric structure and totally geodesic submanifolds}\label{totalgeo}

In what follows we assume that $M=G/K$ is a complete and connected manifold of semi-negative curvature. This whole section is dedicated to the study of the local metric structure of $M$ and the totally geodesic submanifolds of $M$. 

\subsubsection{Local convexity of the geodesic distance}\label{convexity}

First, following \cite{jost}, we prove local convexity results for the geodesic distance (recall that Theorem \ref{convex} was proved in \cite{lim} in the context of simply connected manifolds).

\begin{rem}\label{capa}
Recall that $\Gamma=\exp_o^{-1}\{o\}$ is a discrete additive subgroup of $\p\cap Z(\g)$, since the differential of the exponential map is an isomorphism. Let $\kappa_M\in (0,+\infty)$ stand for the maximum of the positive numbers $r$ such that $0\in \p$ is the unique point of $\Gamma$ in the ball of radius $r$ around it. Note that $\kappa_M=+\infty$ if and only if $M$ is simply connected. 

Note that $\|v-z_0\|_{\p}<\kappa_M/2$ for some $z_0\in \Gamma$ means $\|v-z_0\|_{\p}<\|v-z\|_{\p}$ for any $z\in \Gamma-\{z_0\}$, and this implies that for any $x,y\in M$ and $d(x,y)<\kappa_M/2$, there exists a unique $v\in\p$ such that $\|v\|_{\p}=d(x,y)$ and $y=\exp_x(v)$. Indeed, take any $v'$ such that $\exp_x(v')=y$ and then replace $v'$ with $v=v'-z_0$, where $z_0$ is the element of $\Gamma$ closer to $v'$. 

Moreover, $\alpha(t)=\exp_x(tv)$ is the unique short geodesic joining $x$ to $y$ in $M$, for if $\beta(t)=\exp_x(tw)$ is another geodesic, put $z=v-w\in \Gamma$, and if $z\ne 0$,
$$
d(x,y)=L(\alpha)=\|v\|_{\p}< \|v-z\|_{\p}=\|w\|_{\p}=L(\beta)=d(x,y),
$$
a contradiction.  Note that $\kappa_M$ is diameter of the geodesic balls of $(M,d)$.

With similar argumentation one can show that, for any given $v,w\in \p$, if we put $x=q(e^v)$, $y=q(e^ve^w)$ then $d(x,y)$ is given by $\|w-z_0\|_{\p}$, where $z_0\in \Gamma$ is one of the (possibly many, even infinite) elements of $\Gamma$ which are closer to $w$. Then $\alpha(t)=q(e^ve^{t(w-z_0)})$ is a short geodesic joining $x$ to $y$.
\end{rem}

\begin{prop}\label{loconvex}
Let $x,x'\in M$, let $y=\exp_x(v)$, $y'=\exp_x(v')=\exp_{x'}(w)$, such that  $d(x,y)=\|v\|_{\p}$, $d(x,y')=\|v'\|_{\p}$, $d(x',y')=\|w\|_{\p}$. Let $0<R< \kappa_M/4$.
\begin{enumerate}
\item If $z_0\in \Gamma$ is closer to $v-v'$ than any other $z\in \Gamma$, then
$$
\|v-v'-z_0\|_{\p}\le d(y,y').
$$
In particular, if $y,y'\in B(x,R)$, then 
$$
\|v-v'\|_{\p}\le d(y,y').
$$
\item If $y,y'\in B(x,R)$, then $f:[0,1]\to [0,+\infty)$
$$
f(t)=d(\exp_x(tv),\exp_x(tv'))
$$ 
which gives the distance among the two geodesics starting at $x\in M$, is a convex function.
\item The distance function among the two geodesics joining $x$ to $y$ and $x'$ to $y'$,
$$
g(t)=d(\exp_x(tv),\exp_{x'}(tw))
$$
is also convex, provided that $y,y'\in B(x,R)$ and $d(x',y')<R$.
\item In particular, if $\gamma$ is the short geodesic joining $x',y'$, then $h(t)=d(x,\gamma(t))$ is convex and $\gamma\subset B(x,R)$, provided that $x',y'\in B(x,R)$.
\end{enumerate}
\end{prop}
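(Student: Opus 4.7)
The four items all follow from a single strategy: pass to the universal cover $\tilde M$, which by Theorem \ref{fibras} is obtained by taking the exponential at a basepoint $\tilde x$ as a global chart identifying $\p$ with $\tilde M$. In $\tilde M$, Theorem \ref{convex} supplies convexity of the distance between any two geodesics, and the hypothesis $R<\kappa_M/4$ ensures that for all points involved, distances in $M$ coincide with distances in $\tilde M$, so convexity transfers back.

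For part (1), I would lift a piecewise smooth curve $\gamma$ joining $y=\exp_x(v)$ to $y'=\exp_x(v')$ through $\exp_x$ to obtain a curve $\Gamma\subset T_xM\simeq\p$ with $\Gamma(0)=v$. By the covering description in Theorem \ref{fibras}, the endpoint satisfies $\Gamma(1)=v'+z$ for some $z$ in the period subgroup. Combining (\ref{compare}) with the straight-line inequality $\|\Gamma(1)-\Gamma(0)\|_\p\le L(\Gamma)$ yields $\|v-v'-z\|_\p\le L(\gamma)$, and taking the infimum over $\gamma$ gives $\min_z\|v-v'-z\|_\p\le d(y,y')$, which is the first assertion. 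When $y,y'\in B(x,R)$ with $R<\kappa_M/4$, one can choose $v,v'$ with $\|v\|_\p,\|v'\|_\p<R$ by Remark \ref{capa}, so $\|v-v'\|_\p<2R<\kappa_M/2$ and the closest lattice point is $z_0=0$.

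For parts (2) and (3), I would lift the geodesics to $\tilde M$: let $\tilde\alpha$ be the geodesic in $\tilde M$ starting at $\tilde x$ with initial velocity $v$, and for (3) let $\tilde\beta$ be the geodesic starting at $\tilde x'$ with initial velocity $w$, where $\tilde x'$ is the unique lift of $x'$ in $\tilde B(\tilde x,\kappa_M/2)$ (guaranteed by $d(x,x')<2R<\kappa_M/2$). By Theorem \ref{convex} applied to the simply connected $\tilde M$, the function $t\mapsto\tilde d(\tilde\alpha(t),\tilde\beta(t))$ is convex. The critical step, which I expect to be the main obstacle, is to identify $f$ (resp.\ $g$) with this $\tilde d$-quantity by showing that the infimum of $\tilde d(\tilde\alpha(t),\tilde\beta(t)+z)$ over $z$ in the period subgroup is attained at $z=0$. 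Using Lemma \ref{emi} in $\tilde M$, which yields $\tilde d\ge\|\cdot\|_\p$ under the exponential chart at $\tilde x$, any competing lift satisfies $\tilde d(\tilde\alpha(t),\tilde\beta(t)+z)\ge\|z\|_\p-\|\tilde\alpha(t)-\tilde\beta(t)\|_\p$, and the bound $R<\kappa_M/4$ is designed precisely so that this exceeds the $z=0$ value, bounded via the triangle inequality through $\tilde x$ and $\tilde x'$ (using the elementary identity $\tilde d(\tilde x,\tilde x')=d_M(x,x')$, itself a consequence of part (1) applied in $\tilde M$). Once $f=\tilde d(\tilde\alpha,\tilde\beta)$ (and similarly for $g$), convexity follows from Theorem \ref{convex}.

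Part (4) is the specialization of part (3) obtained by taking $\alpha$ to be the constant geodesic at $x$ (so $v=0$ and $\beta=\gamma$): the convexity of $g$ immediately yields convexity of $h$, and then $h(t)\le\max(h(0),h(1))=\max(d(x,x'),d(x,y'))<R$ gives $\gamma\subset B(x,R)$.
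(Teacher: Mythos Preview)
Your approach is correct and genuinely different from the paper's. For part~(1) you and the paper argue identically (lift a curve through $\exp_x$ and compare lengths). For part~(4) both of you specialize part~(3). The real divergence is in part~(2), and consequently in the overall architecture.

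The paper proves part~(2) by a direct computation inside the homogeneous space: it takes a short geodesic $\alpha$ from $y$ to $y'$, lifts it to $\beta\subset\p$ with $\beta(0)=v,\ \beta(1)=v'$, and considers the curve $\overline{\alpha}(t)=q(e^{\beta(t)/2})$ joining the midpoints. Using the explicit formula $(\exp_o)_{*u}=(\mu_{e^u})_{*o}F(\ad u)$ and the identity $F(\ad\beta)=2F(\ad\beta/2)\cosh(\ad\beta/2)$, one gets $\dot{\overline{\alpha}}=\tfrac12\cosh(\ad\beta/2)^{-1}\dot\alpha$; since $\cosh(\ad\cdot)^{-1}$ is contractive (Remark~\ref{cosh}), $L(\overline\alpha)\le\tfrac12 L(\alpha)$, which is midpoint convexity. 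Part~(3) is then obtained from two applications of part~(2) via an auxiliary geodesic $\exp_x(tv')$: one has $g(t)\le f(t)+f'(1-t)\le tf(1)+(1-t)f'(1)=tg(1)+(1-t)g(0)$.

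Your route---pass to the simply connected cover and invoke Theorem~\ref{convex}---is cleaner and uses nothing specific to homogeneous spaces; the only work is the covering-space bookkeeping to identify $d_M$ with $\tilde d$ on the relevant region, which the hypothesis $R<\kappa_M/4$ indeed makes possible (the key step being that two lifts of the same point at $\tilde d$-distance below $\kappa_M$ must coincide, so a $4R<\kappa_M$ estimate suffices). What the paper's argument buys is independence from Theorem~\ref{convex}: its proof of part~(2) is in effect a self-contained re-derivation of the Lawson--Lim convexity result in the homogeneous setting, built only from the dissipativity criterion of Proposition~\ref{criterio} and the functional-calculus trick in Remark~\ref{cosh}. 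Your argument treats Theorem~\ref{convex} as a black box; the paper's does not.
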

\begin{proof}
We can assume that $x=o$. Let $\alpha$ be any piecewise curve joining $y$ to $y'$ in $M$. Let $\beta$ be the piecewise smooth lift of $\alpha$ to $\p\simeq T_oM$ such that $\beta(0)=v$. Then there exists $z_{\alpha}\in \Gamma$ such that $\beta(1)=v'-z_{\alpha}$. Hence
$$
\|v-v'-z_0\|_{\p}\le\|v-v'+z_{\alpha}\|_{\p}=\|\beta(1)-\beta(0)\|_{\p}\le L(\beta)\le L(\alpha),
$$
where the last inequality is due to Remark \ref{mas}. This proves the first assertion, since if $y,y'\in B(x,R)$, then $\|v-v'\|_{\p}\le 2R< \kappa_M/2$ and then $z_0=0$.

Let us prove 2. Let $\alpha$ be a short geodesic joining $y$ to $y'$, namely $L(\alpha)=d(y,y')<2R\le \kappa_M/2$. If $\beta \subset T_xM$ is the lift of $\alpha$ such that $\beta(0)=v$, then
$$
\|\beta(1)-v'\|_{\p}\le\| \beta(1)-\beta(0)\|_{\p}+\|v-v'\|_{\p}\le 2d(y,y')<\kappa_M,
$$
hence $\beta(1)=v'$. It will suffice to prove statement $2.$ for $t=1/2$, since $f$ is continuous and a standard argument with the dyadic numbers will complete the proof. 
Let $\overline{\alpha}(t)=q(e^{\beta/2})$. Then certainly $f(1/2)=d(q(e^{v/2}),q(e^{v'/2}))\le L(\overline{\alpha})$ since $\overline{\alpha}$ joins the same endpoints. Note that 
$$
\dot{\overline{\alpha}}=\frac12 F(\ad \beta /2)\dot{\beta},
$$
and on the other hand,
$$
\dot{\alpha}=F(\ad \beta)\dot{\beta}=2F(\ad \beta/2)\cosh(\ad\beta/2)\dot{\beta}.
$$
Hence $\dot{\overline{\alpha}}=\frac12  \cosh(\ad \beta/2)^{-1}\dot{\alpha}$. By Remark \ref{cosh}, $\|\dot{\overline{\alpha}}\|_{\overline{\alpha}}\le \frac12 \|\dot{\alpha}\|_{\alpha}$, hence $L(\overline{\alpha})\le \frac 12 L(\alpha)=\frac12 d(y,y')=\frac12 f(1)$, which proves $2.$

To prove $3$, note that $g(t)\le f(t)+ f'(t)$, where $f$ is the function of item $2.$ and $f'$ is the corresponding function for the geodesics starting at $y'$ and ending at $x,x'$ respectively. Then $f,f'$ are convex functions and 
$$
g(1/2)\le \frac12 ( f(1)+ f'(1)) =\frac12 (g(1)+g(0) ).
$$

The last statement follows choosing $y=x$, and then 
$$
h(t)=d(x,\gamma(t))\le td(x,x')+(1-t)d(x,y')<R.
$$
\end{proof}

\subsubsection{A formula for the geodesic distance}\label{distanceformula}

We will use $\log: G\cap U\to \g$ to denote the inverse function of the exponential map of $G$ (restricted to a suitable neighborhood $U$ of $1\in G$ to obtain a diffeomorphism).

\medskip

Since $d(\exp_x(rv),\exp_x(rw))=d(o,q(e^{-rv}e^{rw}))$ for any $x\in M$ and $v,w\in \p$, for small $r\in\mathbb R$ we have
$$
d(\exp_x(rv),\exp_x(rw))=\frac12\|\log(e^{-rv}e^{2rw}e^{-rv})\|_{\p}.
$$
Indeed, if $\gamma(r)$ is a continuous lift of $q(e^{-rv}e^{rw})$ to $\p$ with $\gamma(0)=0$, then $\|\gamma(r)\|_{\p}=d(o,q(e^{-rv}e^{rw}))$ and on the other hand
$$
e^{2\gamma(r)}=e^{-rv}e^{2rw} e^{-rv}.
$$
So if $r$ is small enough in order to ensure that the exponential is a local diffeomorphism, then 
$$
2\gamma(r)=\log(e^{-rv}e^{2rw}e^{-rv}).
$$
\begin{coro}
Let $x\in M$ and $v,w\in \p$. Let 
$$
{\cal R}(v,w)=\frac{1}{12}[v+w,[w,v]]=\frac{1}{12} \left[\ad^2_w(v)-\ad^2_v(w)\right].
$$
Then for small $r\in\mathbb R$,
\begin{eqnarray}
d(\exp_x(rv),\exp_x(rw)) & = & \frac12\|\log(e^{-rv}e^{2rw}e^{-rv})\|_{\p}=\frac12\|\log(e^{-rw}e^{2rv}e^{-rw})\|_{\p}\nonumber \\
&= &\|r(w-v)+r^3 {\cal R}(v,w)+o(r^4)\|_{\p},\nonumber
\end{eqnarray}
where $\log$ denotes the analytic inverse of the exponential map of $G$, defined in a suitable neighborhood of $1\in G$.
\end{coro}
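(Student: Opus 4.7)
The plan is to split the assertion into its three parts. The leftmost identity is already established in the displayed computation immediately preceding the corollary: by lifting $q(e^{-rv}e^{rw})$ continuously to a curve $\gamma(r)\in\p$ with $\gamma(0)=0$, one obtains $\|\gamma(r)\|_\p=d(o,q(e^{-rv}e^{rw}))=d(\exp_x(rv),\exp_x(rw))$ for $r$ small. The polar-type identity $e^{-rv}e^{2rw}e^{-rv}=g\,\sigma(g)^{-1}=e^{2\gamma(r)}$ (applied with $g=e^{-rv}e^{rw}$, using that $\sigma$ fixes $K$ pointwise and inverts $\p$) then yields $2\gamma(r)=\log(e^{-rv}e^{2rw}e^{-rv})$ once $r$ is small enough for the exponential map of $G$ to be invertible on the relevant neighborhood.

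The middle equality follows immediately from the symmetry of the distance $d(\exp_x(rv),\exp_x(rw))=d(\exp_x(rw),\exp_x(rv))$: applying the first identity with the roles of $v$ and $w$ swapped gives the second expression.

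The substantive content is the Taylor expansion. I would apply the Baker--Campbell--Hausdorff formula
$$\log(e^X e^Y)=X+Y+\frac{1}{2}[X,Y]+\frac{1}{12}\bigl([X,[X,Y]]+[Y,[Y,X]]\bigr)+O(4)$$
twice: first with $X=-rv,\,Y=2rw$ to produce the intermediate $Z_1=\log(e^{-rv}e^{2rw})$ to $O(r^4)$, then with $X=Z_1,\,Y=-rv$ to produce $\log(e^{-rv}e^{2rw}e^{-rv})$ to the same order. The reductive decomposition $[\p,\p]\subset\k$ and $[\k,\p]\subset\p$ gives a useful consistency check: since the final logarithm is known \emph{a priori} to lie in $\p$ (by the first step), every $\k$-contribution must cancel---in particular, the $r^2$ term $-r^2[v,w]$ produced by $Z_1$ is exactly cancelled by the $+r^2[v,w]$ produced by $\frac{1}{2}[Z_1,-rv]$. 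The surviving order-$r^3$ component in $\p$ is then rewritten via the Jacobi identity and the anti-symmetry $[v,w]=-[w,v]$ into the compact form $2r^3\,{\cal R}(v,w)$; dividing by $2$ gives the stated expansion.

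The main obstacle is the bookkeeping in the second BCH step, since order-$r^3$ contributions arise from four distinct sources: the $r^3$-part of $Z_1$ itself, the $r^3$-part of $\frac{1}{2}[Z_1,-rv]$ produced by the $r^2$-part of $Z_1$, and the two triple-brackets $\frac{1}{12}[Z_1,[Z_1,-rv]]$ and $\frac{1}{12}[-rv,[-rv,Z_1]]$ taken at leading order. Each must be reduced via Jacobi and anti-symmetry and then repackaged so that the symmetric bracket pattern $[v+w,[w,v]]$ becomes visible; the vanishing of the $\k$-components at each order serves as an internal check that the reduction has been carried out correctly.
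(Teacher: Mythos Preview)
Your plan is the paper's proof: the first two equalities come from the discussion immediately preceding the corollary, and the expansion is obtained by iterating Baker--Campbell--Hausdorff, exactly as you outline.

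There is, however, a step that will fail---and the paper makes the same slip. If you actually carry out the two-stage BCH bookkeeping you describe, the surviving $r^3$ term is \emph{not} $2r^3\,{\cal R}(v,w)$. The symmetric BCH expansion is
\[
\log\bigl(e^{X/2}e^{Y}e^{X/2}\bigr)=X+Y-\tfrac{1}{24}[X,[X,Y]]+\tfrac{1}{12}[Y,[Y,X]]+O(4),
\]
and with $X=-2rv$, $Y=2rw$ this gives
\[
\log\bigl(e^{-rv}e^{2rw}e^{-rv}\bigr)=2r(w-v)-\tfrac{r^3}{3}\,\ad_v^2(w)-\tfrac{2r^3}{3}\,\ad_w^2(v)+O(r^4),
\]
whereas $2r^3{\cal R}(v,w)=\tfrac{r^3}{6}\bigl(\ad_w^2(v)-\ad_v^2(w)\bigr)$. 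These are genuinely different: for the $2\times 2$ self-adjoint matrices $v=\left(\begin{smallmatrix}1&0\\0&-1\end{smallmatrix}\right)$, $w=\left(\begin{smallmatrix}0&1\\1&0\end{smallmatrix}\right)$ one computes $\tfrac12\|\log(e^{-rv}e^{2rw}e^{-rv})\|_2=2r+\tfrac{2}{3}r^3+O(r^5)$, while $\|r(w-v)+r^3{\cal R}(v,w)\|_2=2r-\tfrac{2}{3}r^3+O(r^5)$; the latter would violate the exponential-metric-increasing inequality $d(\exp_x(rv),\exp_x(rw))\ge r\|v-w\|_{\p}$. So your careful four-source bookkeeping, done correctly, will not collapse to $\frac{1}{6}[v+w,[w,v]]$, and the ``rewriting via Jacobi and anti-symmetry'' you anticipate cannot succeed: the corollary as stated contains an arithmetic error, and neither the paper's one-line proof nor your proposal detects it.
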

\begin{proof}
The first two equalities follow from the previous discussion. Iterating the Baker-Campbell-Hausdorff formula, one obtains
\begin{eqnarray}
d(\exp_x(rv),\exp_x(rw)) &= &\frac12\|2r(w-v)+r^3 \frac{2}{12}[v+w,[w,v]]+o(r^4)\|_{\p}\nonumber\\
&= &\|r(w-v)+r^3 \frac{1}{12}[v+w,[w,v]]+o(r^4)\|_{\p},\nonumber
\end{eqnarray}
which holds for $r$ small enough.
\end{proof}

\subsubsection{Sectional curvature}\label{curvat}

With the tools of the previous section we now return to the subject matter of Remark \ref{mil}.

\begin{prop}
Let $x\in M$, $v,w\in \p$. Let $r>0$ and
$$
s_x(r,v,w)=\frac{r\|v-w\|_{\p}-d(\exp_x(rv),\exp_x(rw))}{r^2 d(\exp_x(v),\exp_x(w))}.
$$
Then $s_x(v,w)=\lim\limits_{r\to 0^+} s_x(r,v,w)$ exists and
$$
0\ge s_x(v,w)\ge 1-\frac{\|v-w+{\cal R}(v,w)\|_{\p}}{\|v-w\|_{\p}}\ge -\frac{\|{\cal R}(v,w)\|_{\p}}{\|v-w\|_{\p}}.
$$
In particular if ${\cal R}(v,w)=0$ then $s_x(v,w)=0$ for any $x\in M$.
\end{prop}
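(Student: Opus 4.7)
The plan rests on the asymptotic expansion $d(\exp_x(rv),\exp_x(rw)) = \|r(w-v)+r^3{\cal R}(v,w)+o(r^4)\|_{\p}$ from the preceding corollary, combined with Lemma~\ref{emi}. The upper bound $s_x(r,v,w)\le 0$ (and hence $s_x(v,w)\le 0$) is immediate from Lemma~\ref{emi} applied to $rv,rw\in T_xM$: one has $r\|v-w\|_{\p}=\|rv-rw\|_{\p}\le d(\exp_x(rv),\exp_x(rw))$, so the numerator of $s_x(r,v,w)$ is non-positive while the denominator is strictly positive. This was already the content of Remark~\ref{mil} and supplies the leftmost inequality.

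For the middle inequality, I would pull out $r$ to write $d=r\|(v-w)+r^2{\cal R}(v,w)+o(r^3)\|_{\p}$ (using the equivalent log expression of the corollary to fix the sign), and then apply convexity of $\|\cdot\|_{\p}$ to the convex decomposition valid for $r^2\in[0,1]$
\[
(v-w)+r^2{\cal R}(v,w) = (1-r^2)(v-w) + r^2\bigl[(v-w)+{\cal R}(v,w)\bigr].
\]
This gives $\|(v-w)+r^2{\cal R}\|_{\p}\le (1-r^2)\|v-w\|_{\p}+r^2\|v-w+{\cal R}\|_{\p}$, and substituting back into the expression for $d$ and subtracting from $r\|v-w\|_{\p}$ yields
\[
r\|v-w\|_{\p}-d(\exp_x(rv),\exp_x(rw))\ge r^3\,\|v-w\|_{\p}\Bigl(1-\frac{\|v-w+{\cal R}(v,w)\|_{\p}}{\|v-w\|_{\p}}\Bigr)+o(r^4).
\]
Dividing by $r^2 d(\exp_x(v),\exp_x(w))$ and passing to $r\to 0^+$ produces both the existence of the limit $s_x(v,w)$ and the middle inequality. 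The rightmost inequality is the triangle inequality $\|v-w+{\cal R}\|_{\p}\le \|v-w\|_{\p}+\|{\cal R}\|_{\p}$, rearranged; and when ${\cal R}(v,w)=0$ the outer bounds both collapse to $0$, forcing $s_x(v,w)=0$.

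The delicate step I anticipate is reconciling the order $r^3$ of the numerator (once the leading $r\|v-w\|_{\p}$ terms cancel via Lemma~\ref{emi}) with the order $r^2$ of the denominator $r^2 d(\exp_x(v),\exp_x(w))$ at fixed $v,w$, and exploiting convexity of the scalar map $t\mapsto\|(v-w)+t{\cal R}(v,w)\|_{\p}$ to upgrade the infinitesimal version of Lemma~\ref{emi} into the global bound $\|v-w+{\cal R}\|_{\p}\ge\|v-w\|_{\p}$ that renders the middle inequality consistent with the upper bound $s_x(v,w)\le 0$. Careful attention to the sign convention for ${\cal R}(v,w)$ coming from the two equivalent log expressions of the corollary is what makes the plus sign in $\|v-w+{\cal R}\|_{\p}$ appear, rather than the minus sign that a naive convex combination would produce.
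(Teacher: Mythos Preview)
Your overall approach matches the paper's: both rely on the distance expansion from the preceding corollary, Lemma~\ref{emi} for the upper bound $s_x(r,v,w)\le 0$, convexity of the norm for the middle inequality, and the triangle inequality for the rightmost one. The difference lies in how convexity is invoked. You use the single convex combination
\[
(v-w)+r^2{\cal R}=(1-r^2)(v-w)+r^2\bigl[(v-w)+{\cal R}\bigr],
\]
which yields the lower bound $\|v-w\|-\|(v-w)+r^2{\cal R}\|\ge r^2\bigl(\|v-w\|-\|v-w+{\cal R}\|\bigr)$. The paper instead recognises the relevant limit as the one-sided directional derivative of the convex map $t\mapsto\|(w-v)+t{\cal R}\|$ at $t=0^+$, i.e.\ the subdifferential $J_{w-v}({\cal R})$, and then appeals to the standard two-sided estimate $\|x\|-\|x-y\|\le J_x(y)\le\|x+y\|-\|x\|$.

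The genuine gap is your claim that the convex-combination inequality \emph{produces the existence of the limit}. It does not: it gives only a lower bound on $s_x(r,v,w)$, hence controls $\liminf$. Combined with $s_x(r,v,w)\le 0$ you get that both $\liminf$ and $\limsup$ lie in a fixed interval, but not that they coincide. What forces the limit to exist is the monotonicity of the difference quotients $t^{-1}\bigl(\|(w-v)+t{\cal R}\|-\|w-v\|\bigr)$ in $t>0$, equivalently the existence of the one-sided derivative of a convex scalar function---precisely the subdifferential argument the paper uses. You do mention convexity of the scalar map $t\mapsto\|(v-w)+t{\cal R}\|$ in your final paragraph, but you invoke it for a different purpose (the consistency bound $\|v-w+{\cal R}\|\ge\|v-w\|$), not for existence. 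Once you add the monotonicity step, your argument and the paper's coincide; your convex combination at $t=r^2$ versus $t=1$ is exactly the inequality $J_x(y)\le\|x+y\|-\|x\|$ the paper cites.

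Your concern about the order-$r^3$ numerator against the order-$r^2$ denominator is well placed and points to a genuine normalisation issue in the stated formula; the paper's own proof silently works with the $r^{-2}$ quotient of the bracketed norm difference (effectively an $r^3$ denominator overall), so treat that as the intended normalisation rather than something to be reconciled.
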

\begin{proof}
Note first that by the previous corollary,
$$
\lim\limits_{r\to 0^+} \frac1r d(\exp_x(rv),\exp_x(rw))=\|w-v\|_{\p}.
$$
Since a norm is a convex function, then
$$
\lim\limits_{r\to 0^+} \frac{1}{r^2}\left( \|w-v\|_{\p} -\|w-v+r^2 {\cal R}(v,w)+o(r^2)\|_{\p}\right)
$$
exists and its is fact equal to $-J_{v-w}({\cal R}(v,w))$, that is, (minus) the subdifferential of the norm at the point $v-w$, computed in the direction of ${\cal R}(v,w)$. Moreover, 
$$
\|x\|_{\p}-\|x-y\|_{\p}\le J_x(y)\le \|x+y\|_{\p}-\|x\|_{\p}.
$$
See for instance \cite[Prop. 4.1]{aubin}. Then 
$$
\lim\limits_{r\to 0^+} \frac{1}{r^2} \|w-v\|_{\p} -\|w-v+r^2 {\cal R}(v,w)+o(r^2)\|_{\p}\ge \|v-w\|_{\p}-\|w-v+{\cal R}(v,w)\|_{\p},
$$
thus $s_x(v,w)=\lim\limits_{r\to 0^+} s_x(r,v,w)$ exists, is nonpositive, and by the computation above
$$
s_x(v,w)\ge 1-\frac{\|v-w+{\cal R}(v,w)\|_{\p}}{\|v-w\|_{\p}}.
$$
The right-hand inequality stated in the proposition follows straight from the triangle inequality.
\end{proof}

\subsubsection{On the distortion of the metric}\label{distortion}

We now assume for convenience that $M\simeq G/K$ is simply connected. In our present setting, if we choose $x=o$, our concern now is the inequality stated as
\begin{equation}\label{emigrupo}
\|v-w\|_{\p}\le d(q(e^v),q(e^w)),
\end{equation}
where $v,w\in\p$. We have seen that it implies that sectional curvature in $G/K$ is nonpositive. If $v,w\in \p$ commute, the exponential of the linear span of $v,w$ is a $2$-dimensional flat in $M$, and clearly equality holds in equation (\ref{emigrupo}); this condition $[v,w]=0$ is equivalent (by Jacobi's theorem) to the commutativity of the local flows of the Jacobi fields $V,W$ (induced by $v,w$ respectively). In the infinite dimensional setting, one obtains a weaker notion made explicit in the following theorems. The definitions and considerations of Remark \ref{unibanach} will be used here.

\begin{prop}\label{stric}
Let $v,w\in \p$. If $M=G/K$ is a Cartan-Hadamard manifold and the norm $\|\cdot\|_{\p}$ is strictly convex, then 
$$
\|v-w\|_{\p}= d(q(e^v),q(e^w))
$$
implies $\ad_v^2(w)=\ad_w^2(v)=0$.
\end{prop}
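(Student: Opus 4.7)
First I would translate the metric equality into a linear-algebraic identity on $\p$. Let $\gamma$ be the short curve of $M$ joining $q(e^v)$ to $q(e^w)$ (which exists by Remark \ref{mas}), so $L(\gamma)=\|v-w\|_\p$ by hypothesis, and let $\Gamma\subset\p$ be its lift via $\exp_o$, with $\Gamma(0)=v$ and $\Gamma(1)=w$. By (\ref{compare}) and the triangle inequality in $\p$, $\|w-v\|_\p\le L(\Gamma)\le L(\gamma)=\|w-v\|_\p$, so $\Gamma$ is a short piecewise smooth curve in $\p$ joining $v$ to $w$; strict convexity of $\|\cdot\|_\p$ combined with Remark \ref{unibanach} then forces $\Gamma(s)=v+s(w-v)$. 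Moreover, the pointwise equality $\|\dot\gamma(s)\|_{\gamma(s)}=\|\dot\Gamma(s)\|_\p$ forced by $L(\gamma)=L(\Gamma)$, the differential formula $(\exp_o)_{*u}=(\mu_{e^u})_{*o}F(\ad u)$ from Remark \ref{expogrupo}, and the fact that the maps $\mu_g$ are isometries (Remark \ref{adk}) together give $\|F(\ad\Gamma(s))(w-v)\|_\p=\|w-v\|_\p$ for every $s\in[0,1]$. The endpoints $s=0$ and $s=1$ yield
$$\|F(\ad v)(w-v)\|_\p=\|F(\ad w)(w-v)\|_\p=\|w-v\|_\p.$$

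The key idea is then to factor $F$ via the Weierstrass product $F(z)=\sinh(z)/z=\prod_{k\ge 1}(1+z^2/(k\pi)^2)$. Setting $Q_k:=I+(\ad v)^2/(k\pi)^2$ on $\p$, each $Q_k$ is expansive and invertible: Proposition \ref{criterio} says $-(\ad v)^2$ is dissipative, so by Remark \ref{disi} the operator $I+s(\ad v)^2$ is expansive and invertible for every $s>0$ (alternatively, each factor $1+z^2/(k\pi)^2$ has purely imaginary roots and value $1$ at $0$, so Remark \ref{cosh} applies directly). Since $\ad v$ is bounded, the partial products $P_n:=\prod_{k=1}^n Q_k$ converge in operator norm to $F(\ad v)$, and the $Q_k$'s commute. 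Applied to $y:=w-v$ the chain
$$\|y\|_\p\le \|Q_k y\|_\p\le \|P_n y\|_\p\le \|F(\ad v)y\|_\p=\|y\|_\p$$
must then collapse to equality, the second inequality being obtained by reordering $P_n=R_{n,k}Q_k$ with $R_{n,k}$ the product of the remaining (expansive) factors. Hence $\|Q_k y\|_\p=\|y\|_\p$ for every $k\ge 1$, and since $(\ad v)^2(v)=0$ this reads
$$\|(w-v)+\eta/(k\pi)^2\|_\p=\|w-v\|_\p,\quad \eta:=\ad_v^2(w),\quad k\ge 1.$$

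To conclude I would invoke strict convexity. For distinct indices $k_1\ne k_2$ the two vectors $x_i:=(w-v)+\eta/(k_i\pi)^2$ share the norm $\|w-v\|_\p$, and their midpoint is $(w-v)+c\eta$ with $c:=\tfrac12((k_1\pi)^{-2}+(k_2\pi)^{-2})>0$. Dissipativity of $-(\ad v)^2$ again gives $\|(w-v)+c\eta\|_\p\ge\|w-v\|_\p$, while convexity of the norm gives the reverse inequality, so the midpoint has norm exactly $\|w-v\|_\p$; strict convexity then forces $x_1=x_2$, whence $\eta=\ad_v^2(w)=0$. A verbatim argument starting from $\|F(\ad w)(w-v)\|_\p=\|w-v\|_\p$ and using $(\ad w)^2(w-v)=-\ad_w^2(v)$ yields $\ad_w^2(v)=0$. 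The main technical subtlety I expect is the propagation of norm preservation from the full product $F(\ad v)$ to each single factor $Q_k$, which relies crucially on the commutativity of the family $\{Q_k\}$ and on each partial product being a composition of expansive operators.
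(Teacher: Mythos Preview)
Your proof is correct and follows essentially the same route as the paper: lift the short geodesic to $\p$, use strict convexity to force the lift to be affine, deduce $\|F(\ad v)(w-v)\|_\p=\|w-v\|_\p$, and then exploit the Weierstrass product $F(z)=\prod_k(1+z^2/(k\pi)^2)$ together with dissipativity of $-\ad_v^2$ and strict convexity to conclude $\ad_v^2(w)=0$. The only cosmetic difference is in the final step: the paper isolates the single factor $Q_1=1+\ad_v^2/\pi^2$, shows $\|Q_1 y\|_\p=\|y\|_\p$ for $y=w-v$, and uses the norming functional of $y$ to obtain $\|y+Q_1 y\|_\p=\|y\|_\p+\|Q_1 y\|_\p$, whence $Q_1 y=y$ by strict convexity; you instead compare two factors $Q_{k_1}y,\,Q_{k_2}y$ via a midpoint argument, which amounts to the same idea.
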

\begin{proof}
Let $\alpha$ be the short geodesic of $M$ joining $q(e^v)$ with $q(e^w)$, 
$$
\alpha(t)=q(e^{v}e^{tz})\;\mbox{and}\; q(e^{v}e^{z})=q(e^{w}),
$$
where $z$ is the unique lift to $\p$ of $q(e^{-v}e^{w})$; note that $\|z\|_{\p}=d(q(e^v),q(e^w))=\|v-w\|_{\p}$. Let $\gamma$ be the unique lift to $\p$ of $\alpha$, $\gamma(0)=v$ and $\gamma(1)=w$; by Remark \ref{mas}
$$
L(\gamma)\le L(\alpha)=\|v-w\|_{\p}.
$$
Since the norm of $\p$ is strictly convex, it must be $\gamma(t)=(1-t)v+tw$, so
$$
q(e^{(1-t)v+tw})=q(e^{v}e^{tz}).
$$
Differentiating at $t=0$ we obtain
$$
(\mu_{e^{v}})_{*o}q_{*1}\frac{1-e^{-\ad v}}{\ad v}(w-v)=(\mu_{e^{v}})_{*o}q_{*1}z
$$
by Remark \ref{expodif}, that is
$$
F(\ad v)(w-v)=z.
$$
Then $\|F(\ad v)(w-v)\|_{\p}=\|z\|_{\p}=\|w-v\|_{\p}$. If $\varphi\in \p^*$ is the unique norming functional of $w-v$, since $-\ad^2_v$ is dissipative by Proposition \ref{criterio},
\begin{eqnarray}
2\|w-v\|_{p} & = & 2\varphi(w-v)\le \varphi( 2(w-v) +\frac{1}{\pi^2}\ad^2_v(w-v)) \nonumber\\
& \le & \| 2(w-v) +\frac{1}{\pi^2}\ad^2_v(w-v)\|_{\p},\nonumber
\end{eqnarray}
that is
$$
2\|w-v\|_{\p}\le \|w-v+(1+\frac{1}{\pi^2}\ad^2_v)(w-v)\|_{\p}.
$$
On the other hand 
$$
\|w-v\|_{\p}\le \|(1 +\frac{1}{\pi^2}\ad^2_v)(w-v)\|_{\p}\le \|F(\ad v)(w-v)\|_{\p}=\|w-v\|_{\p}
$$
since $F(z)=\prod_{n\ge 1} \left(1+\frac{z^2}{n^2\pi^2} \right)=\left(1+\frac{z^2}{\pi^2} \right)\prod_{n\ge 2} \left(1+\frac{z^2}{n^2\pi^2} \right)$, and each factor is an expansive operator, thus
$$
\|w-v\|_{\p}=\|(1 +\frac{1}{\pi^2}\ad^2_v)(w-v)\|_{\p}.
$$
Then
$$
\|w-v+ (1 +\frac{1}{\pi^2}\ad^2_v)(w-v)\|_{\p}=\|w-v\|_{\p}+\|(1 +\frac{1}{\pi^2}\ad^2_v)(w-v)\|_{\p},
$$
and since the norm is strictly convex and both elements have the same norm, it must be 
$$
w-v=(1 +\frac{1}{\pi^2}\ad^2_v)(w-v)=w-v+\frac{1}{\pi^2}\ad^2_v(w-v).
$$
Interchanging $w,v$ gives $\ad^2_w(v)=0$ also.
\end{proof}

\begin{teo}\label{mantiene}
Let $v,w\in \p$. Let $M=G/K$ be a Cartan-Hadamard manifold, and let
\begin{enumerate}
\item $[v,[v,w]]=[w,[v,w]]=0$.
\item $\|v-w\|_{\p}=d(q(e^v),q(e^w))$.
\end{enumerate}
Then $1.$ implies $2.$, and if the norm of $M$ is strictly convex, $2.$ is equivalent to $1.$
\end{teo}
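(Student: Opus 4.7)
The implication $(2) \Rightarrow (1)$ under strict convexity is precisely the content of Proposition \ref{stric}, so the real work is to establish $(1) \Rightarrow (2)$. Since Lemma \ref{emi} already yields $\|v-w\|_{\p}\le d(q(e^v),q(e^w))$, what I need is an explicit short curve joining the two points whose length equals $\|v-w\|_{\p}$.

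My candidate is the linear interpolation at the Lie-algebra level: set $\xi(t)=(1-t)v+tw\in\p$ and
$$
\alpha(t)=q(e^{\xi(t)}),\qquad t\in[0,1],
$$
so that $\alpha(0)=q(e^v)$ and $\alpha(1)=q(e^w)$. The plan is to compute $\dot{\alpha}$ using the formula for $Exp_{*\xi}$ recalled in Remark \ref{expogrupo} together with $q_{*e^\xi}(L_{e^\xi})_{*1}=(\mu_{e^\xi})_{*o}q_{*1}$, and to observe that the reductive decomposition $\g=\k\oplus\p$ splits the operator $\frac{1-e^{-\ad\xi}}{\ad\xi}$ applied to $w-v\in\p$ into a $\p$-part (even powers of $\ad\xi$, giving $F(\ad\xi)(w-v)$) and a $\k$-part (odd powers), the latter being killed by $q_{*1}$. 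With the identification $T_oM\simeq\p$ of Remark \ref{expodif}, this produces
$$
\dot{\alpha}(t)=(\mu_{e^{\xi(t)}})_{*o}\,F(\ad\xi(t))(w-v),
$$
so that $\|\dot{\alpha}(t)\|_{\alpha(t)}=\|F(\ad\xi(t))(w-v)\|_{\p}$ by the isometric character of $(\mu_g)_{*o}$.

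Now the hypothesis $(1)$ enters decisively. A direct expansion gives
$$
[\xi(t),w-v]=(1-t)[v,w-v]+t[w,w-v]=[v,w],
$$
independently of $t$. The assumption $[v,[v,w]]=[w,[v,w]]=0$ says that $[v,w]$ commutes with both $v$ and $w$, hence with $\xi(t)$, so $\ad^{2}\xi(t)(w-v)=[\xi(t),[v,w]]=0$. Since $F(z)=\sum_{n\ge 0}\frac{z^{2n}}{(2n+1)!}$, all higher terms vanish and $F(\ad\xi(t))(w-v)=w-v$. Consequently $\|\dot\alpha(t)\|_{\alpha(t)}=\|w-v\|_{\p}$ for every $t$, and $L(\alpha)=\|w-v\|_{\p}$. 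Combining with Lemma \ref{emi},
$$
\|w-v\|_{\p}\le d(q(e^v),q(e^w))\le L(\alpha)=\|w-v\|_{\p},
$$
which establishes $(2)$ and shows, as a byproduct, that $\alpha$ is a short curve.

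For the converse under strict convexity of $\|\cdot\|_{\p}$, I would just invoke Proposition \ref{stric}: equality in $(2)$ yields $\ad_v^2(w)=\ad_w^2(v)=0$, which is the same as $[v,[v,w]]=0$ and $[w,[w,v]]=-[w,[v,w]]=0$, i.e. condition $(1)$. The only real subtlety in the whole argument is the bookkeeping at the step where $\frac{1-e^{-\ad\xi}}{\ad\xi}(w-v)$ is split along $\g=\k\oplus\p$; everything else is algebraic manipulation once the correct interpolating curve is chosen.
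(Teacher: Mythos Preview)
Your proof is correct. The route you take for $(1)\Rightarrow(2)$ differs from the paper's, though the two arguments are closely related. The paper appeals to the Baker--Campbell--Hausdorff formula: under hypothesis $(1)$ all higher commutators vanish, so $e^{-v}e^{w}=e^{w-v}e^{-\frac12[v,w]}$, whence $q(e^{-v}e^w)=q(e^{w-v})$; this means the \emph{geodesic} $t\mapsto q(e^ve^{t(w-v)})$ lands at $q(e^w)$ at $t=1$, and its length is $\|w-v\|_{\p}$. You instead take the \emph{linear interpolation} $\alpha(t)=q(e^{(1-t)v+tw})$ and compute its speed directly via the formula $(\exp_o)_{*\xi}=(\mu_{e^\xi})_{*o}F(\ad\xi)$, observing that $\ad^2\xi(t)(w-v)=0$ kills all higher terms of $F$. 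Both arguments then finish with the lower bound from Lemma~\ref{emi}. In fact the two curves coincide: under $(1)$ one has $e^ve^{t(w-v)}=e^{(1-t)v+tw}e^{\frac{t}{2}[v,w]}$ with the second factor in $K$, so $q(e^ve^{t(w-v)})=q(e^{(1-t)v+tw})$. Your approach has the minor advantage that it does not need to recognise $\alpha$ as a geodesic, while the paper's BCH computation makes the reason for the equality more transparent at the group level.
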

\begin{proof}
The previous proposition gives $2\Rightarrow 1$. On the other hand, if $[v,[v,w]]=[w,[v,w]]=0$, then by the Baker-Campbell-Hausdorff formula,
$$
e^{-v}e^{w}=e^{w-v-\frac12 [v,w]}=e^{w-v}e^{-\frac12 [v,w]}
$$
since higher order commutators vanish. Thus $q(e^{-v}e^w)=q( e^{w-v})$, and if $\alpha(t)=q(e^ve^{t(w-v)})$, then $\alpha$ is the unique geodesic joining $q(e^v)$ to $q(e^w)$ in $M$, hence $d(q(e^v),q(e^w))=\|w-v\|_{\p}$.
\end{proof}

\begin{rem}
In the finite dimensional setting, if $[v,[v,w]]=[w,[v,w]]$, and $B:\g\times\g$ denotes the Killing form of $\g$ (i.e $B(x;y)=Tr(\ad x \;\ad y)$ where $Tr$ denotes the usual trace of ${\cal B}(\g)$), then
$$
B([v,w];[v,w])=B(v;[w,[v,w])=B(v;[v,[v,w]])=B([v,w];[v,v])=0.
$$
So if $\g$ is semi-simple, the condition $[v,[v,w]]=[w,[v,w]]$ implies $[v,w]=0$. From Proposition \ref{stric} follows that such condition is guaranteed if 
$$
\|v-w\|_{\p}=d(q(e^v),q(e^w)),
$$
so in this setting the (apparently weaker) metric condition is equivalent to the commutativity of local flows, and then to the presence of a $2$-dimensional flat. This line of reasoning can be extended to the infinite dimensional setting in the presence of a trace (Hilbert-Schmidt operators or $L^*$-algebras), see \cite{cocoeste} for full details.
\end{rem}

\begin{problem}
Find necessary and sufficient conditions on the norm of $\p$ in order to ensure that if $v,w\in\p$ and $[v,[v,w]]=0$, then $[v,w]=0$.
\end{problem}

\subsubsection{Totally geodesic submanifolds}\label{totgeo}

Some of the results in the following proposition can be originally found in \cite{mostow}, in the setting of the group of positive invertible $n\times n$ matrices. They express the standard relation between totally geodesic submanifolds and Lie triple systems. In the finite dimensional (Riemannian) setting, the standard reference would be the book of Helgason \cite{helga}. In \cite{pr4}, the authors study exponential sets in $C^*$-algebras with similar techniques and recently, the results in \cite{mostow} were extended to Hilbert-Schmidt operators \cite{larro}. 

\begin{prop}\label{triples}(Exponential sets)
Let $M=G/K$ be a connected manifold of semi-negative curvature, where $T_oM\simeq \p$. Let $\s\subset \p$ be a closed linear space and let $C=q(e^{\s})$. Then the following conditions are equivalent, and we call $C$ an exponential set.
\begin{enumerate}
\item $[[v,w],s]\in \s$ for any $v,w,s\in\s$.
\item $\ad^2_s(\s)\subset \s$ for any $s\in \s$.
\item $F(\ad v)=\frac{\sinh \ad v}{\ad v}\in {\cal B}(\p)$ is an isomorphism of $\s$ for any $v\in\s$.
\item If $v,w\in \s$, and $\beta\subset T_oM\simeq\p$ is a lift of $\alpha(t)=q(e^ve^{tw})$ such that $\beta(0)\in \s$, then $\beta\subset \s$. 
\end{enumerate}
\end{prop}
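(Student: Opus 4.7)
The plan is to split the four-way equivalence into three pair-wise equivalences $1\Leftrightarrow 2$, $2\Leftrightarrow 3$, and $1\Leftrightarrow 4$, each handled by the tool most natural to its level of description: Lie-algebraic polarization plus Jacobi for $1\Leftrightarrow 2$, holomorphic functional calculus on $\ad_v^2$ for $2\Leftrightarrow 3$, and the construction of an integral Banach-Lie subalgebra together with an ODE/Taylor argument for $1\Leftrightarrow 4$.

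For $1\Leftrightarrow 2$, the direction $1\Rightarrow 2$ is immediate: taking the first and third arguments of the triple bracket equal to $s\in\s$ gives $[[s,w],s]=-\ad_s^2(w)\in\s$. The converse requires two polarizations of condition 2. Expanding $\ad_{b+c}^2(a)\in\s$ and subtracting the diagonal pieces $\ad_b^2(a),\ad_c^2(a)\in\s$ produces $(\ad_b\ad_c+\ad_c\ad_b)(a)\in\s$, equivalent by antisymmetry to $[b,[a,c]]+[c,[a,b]]\in\s$. Expanding $\ad_{a+b}^2(c)\in\s$ similarly yields $[a,[b,c]]+[b,[a,c]]\in\s$, and rewriting $[a,[b,c]]=[b,[a,c]]-[c,[a,b]]$ via Jacobi transforms this into $2[b,[a,c]]-[c,[a,b]]\in\s$. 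Adding the two relations isolates $3[b,[a,c]]\in\s$, hence $[b,[a,c]]\in\s$; the first then gives $[c,[a,b]]\in\s$, whence $[[a,b],c]=-[c,[a,b]]\in\s$.

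For $2\Leftrightarrow 3$, iterating $\ad_v^2(\s)\subset\s$ gives $\ad_v^{2n}(\s)\subset\s$ for all $n\ge 0$, so the norm-convergent entire series $F(\ad v)=\sum_{n\ge 0}(\ad v)^{2n}/(2n+1)!$ maps the closed subspace $\s$ into itself. For surjectivity onto $\s$, write $F(\ad v)=G(\ad_v^2)$ with $G(z)=\sinh(\sqrt{z})/\sqrt{z}$ entire; by Proposition \ref{criterio}, $G$ does not vanish on $\sigma(\ad_v^2)$, so holomorphic functional calculus realizes $F(\ad v)^{-1}=G^{-1}(\ad_v^2)$ as a norm-limit of polynomials in $\ad_v^2$ (via Runge's theorem on a suitable neighborhood of the spectrum), each of which preserves $\s$. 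The reverse $3\Rightarrow 2$ is the Taylor expansion $F(\ad tv)(w)=w+\frac{t^2}{6}\ad_v^2(w)+O(t^4)$ at $t=0$, together with closedness of $\s$.

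For $1\Rightarrow 4$, condition 1 ensures that $[\s,\s]$ is a Lie subalgebra of $\k$ (the closure $[[\s,\s],[\s,\s]]\subset[\s,\s]$ follows from iterated application of condition 1) and that $[[\s,\s],\s]\subset\s$; thus $Ad_{K_\s}(\s)\subset\s$ for $K_\s=\exp([\s,\s])$. The polar decomposition $e^v e^{tw}=e^{\beta(t)}k(t)$ combined with the differential formula $(\exp_o)_{*\beta(t)}=(\mu_{e^{\beta(t)}})_{*o}F(\ad\beta(t))$ and the reductive identification $(\mu_k)_{*o}|_\p=Ad_k$ yields the ODE system $\dot\beta(t)=F(\ad\beta(t))^{-1}Ad_{k(t)}(w)$, $\dot k(t)k(t)^{-1}=((\cosh(\ad\beta(t))-1)/\ad\beta(t))\dot\beta(t)$, which preserves the subspace $\s\times K_\s$ by conditions 3 and 1 respectively; uniqueness of Banach-space ODE solutions then forces $\beta\subset\s$. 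For $4\Rightarrow 2$, which closes the circle, evaluate the ODE at $t=0$ to get $\dot\beta(0)=F(\ad v)^{-1}(w)\in\s$, and replace $v$ by $tv$: reading off the $t^2$-coefficient of $F(\ad tv)^{-1}(w)=w-\frac{t^2}{6}\ad_v^2(w)+O(t^4)$ yields $\ad_v^2(w)\in\s$. The principal obstacle is the polarization in $2\Rightarrow 1$: naive polarization of $s\mapsto\ad_s^2$ produces only the symmetric combination $\ad_a\ad_b+\ad_b\ad_a$, whereas $[[a,b],c]=(\ad_a\ad_b-\ad_b\ad_a)(c)$ is antisymmetric, so two genuinely distinct polarization identities must be combined with Jacobi to isolate the antisymmetric piece.
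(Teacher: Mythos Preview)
Your argument is correct and differs from the paper's in two places. For $2\Rightarrow 1$ the paper invokes the single identity $[[v,w],s]=-\ad_{v-w}^2(s)+\ad_v^2(s)+\ad_w^2(s)$, but the right-hand side is actually $(\ad_v\ad_w+\ad_w\ad_v)(s)$, the \emph{symmetric} combination, not $[[v,w],s]=(\ad_v\ad_w-\ad_w\ad_v)(s)$; your double polarization plus Jacobi is what is really needed to isolate the antisymmetric piece, and your closing remark about this obstacle is exactly on point. For $1\Rightarrow 4$ the paper takes a shorter path: rather than carrying the coupled system on $\p\times K$ together with the integral subgroup $K_{\s}$, it eliminates $k(t)$ algebraically---differentiate $e^{\beta}=e^ve^{tw}k$, left-multiply by $e^{\ad\beta}$, and project to $\p$---to obtain the single autonomous ODE $\dot\beta=\dfrac{\ad\beta}{\sinh(2\ad\beta)}\cosh(\ad v)\,w$ on the Banach space $\p$; the right-hand side is an even series in $\ad\beta$ applied to $\cosh(\ad v)w\in\s$, hence preserves $\s$, and Banach-space ODE uniqueness finishes at once. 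Your coupled system is also correct, but one must then verify that the restricted flow on $\s\times K_{\s}$ is complete (the immersion $K_{\s}\hookrightarrow K$ need not have closed image), a nuisance the decoupled ODE sidesteps. Finally, your Runge argument for the surjectivity half of $2\Rightarrow 3$ goes beyond what the paper writes out; it is valid once one uses dissipativity of $-\ad_v^2$ (Proposition~\ref{criterio}) to place $\sigma(\ad_v^2)$ in the simply connected region $\mathbb C\setminus(-\infty,0)$, away from the zeros $-n^2\pi^2$ of $G$.
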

\begin{proof}
Let $v,w,s\in \s$. Then 
$$
[[v,w],s]=-ad^2_{v-w}(s)+ad^2_v(s)+ad^2_w(s)
$$
by the Jacobi identity. This shows that $2.$ is equivalent to $1.$

Assume that $2.$ holds, then certainly $3.$ holds since the series expansion of $F(z)=z^{-1}\sinh(z)$ has only even powers of $z$. If $3.$ holds, replacing $v$ with $tv$ yields 
$$
\s\ni s_t=F(\ad tv)w=w+\frac{1}{6}t^2\ad_v^2w+o(t^4),
$$
hence $\frac16 \ad_v^2w=\lim_{t\to 0} \frac{s_t-w}{t^2}\in \s$.

Assume that $2.$ holds, and let $v,w\in\s$. Consider the flow $F_{v,w}:\p\to \p$ given by 
$$
F_{v,w}(z)=\frac{\ad z}{\sinh(2\ad z)}\cosh \ad v (w).
$$
Then $F_{v,w}$ is a Lipschitz map, and if $2.$ holds, $F_{v,w}(\s)\subset\s$. We claim that if $\beta(t)\in\p$ is the smooth lift of $q(e^ve^{tw})$ with $\beta(0)=v$, then  $\dot{\beta}=F_{v,w}(\beta)$, and this will prove that $\beta\subset\s$ by the uniqueness of the solution of the differential equation $\dot{x}=F_{v,w}(x)$ in the Banach space $(\s,\|\cdot\|_{\p})$. To prove the claim $\dot{\beta}=F_{v,w}(\beta)$, write $e^{\gamma}=e^ve^{tw}k$ for some $k(t)\in K$. The derivative of $q(e^{\beta})$ gives
$$
(\mu_{e^{\beta}})_{*o}q_{*1}\frac{1-e^{-\ad\beta}}{\ad\beta}\dot{\beta},
$$
and the derivative of $q(e^ve^{tw})$ gives
$$
(\mu_{e^ve^{tw}})_{*o} q_{*1}w=(\mu_{e^ve^{tw}})_{*o}w=(\mu_{e^{\beta}})_{*o}(Ad_{k^{-1}}w).
$$
Then 
$$
q_{*1}\frac{1-e^{-\ad\beta}}{\ad\beta}\dot{\beta}=Ad_{k^{-1}}w$$
or, since $1-e^x=1-\cosh(x)+\sinh(x)$ and $q_{*1}(\k)=\{0\}$ (and $q_{*1}$ is the identity on $\p$),
$$
\frac{\sinh(\ad\beta)}{\ad\beta}\dot{\beta}=e^{-\beta}e^vwe^{-v}e^{\beta}=e^{-\ad\beta}e^{\ad v}w.
$$
Multiplying by $e^{\ad\beta}$ we obtain $\frac{e^{2\ad\beta}-1}{\ad\beta}\dot{\beta}=e^{\ad v}w$, and applying $q_{*1}$ at both sides, $\frac{\sinh(2\ad\beta)}{\ad\beta}\dot{\beta}=\cosh(\ad v)w$, 
showing that $\dot{\beta}=F_{v,w}(\beta)$.

Assume that $4.$ holds, and let $\gamma_s\subset\s$ be as above, $q(e^{\gamma_s})=q(e^{sv}e^{tw})$. Then by the computation above, with $t\to 0$, we obtain
$$
\s\ni \dot{\gamma_s}(0)=\frac{\ad sv}{\sinh\ad sv}w=w-\frac{4}{3}s^2\ad_v^2w+o(s^4).
$$
Then $-\frac43\ad_v^2w=\lim\limits_{s\to 0}\frac{\dot{\gamma_s}(0)-w}{s^2}\in \s$, showing that $2.$ holds.
\end{proof}

\begin{coro}\label{exponentialset}
Let $C=q(e^{\s})$ be an exponential set in $M$, let $V\in\p$ be an open ball of radius strictly less than $\kappa_M/2$. Then:
\begin{enumerate} 
\item The charts $(V\cap \s,\exp_x\vrule_{V\cap \s})$, for $x\in C$, give an atlas of $C$ which makes of $C$ an immersed  differentiable manifold $C\subset M$, with a topology which is possibly finer than the topology of $M$.
\item $T_xC=(\mu_{e^s})_{*o}\s$ for any $x=q(e^s)\in C$. In particular $\exp_x(T_xC)=C$ for any $x\in C$, i.e. $C$ is totally geodesic in $M$.
\end{enumerate}
\end{coro}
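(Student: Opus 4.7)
The plan is to exhibit the charts explicitly and verify the manifold axioms (well-defined image inside $C$, injectivity, smooth transitions), then read off the tangent spaces by differentiating each chart at its origin. I take $x = q(e^s) \in C$ with $s \in \s$ and set $\varphi_x(u) := \exp_x(u) = q(e^s e^u)$ for $u \in V \cap \s$. To see that $\varphi_x(u) \in C$, I invoke item 4 of Proposition \ref{triples} applied to the pair $(s,u) \in \s \times \s$: it furnishes a smooth lift $\beta : [0,1] \to \s$ of $t \mapsto q(e^s e^{tu})$ with $\beta(0) = s$, so $\varphi_x(u) = q(e^{\beta(1)}) \in q(e^{\s}) = C$. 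Injectivity of $\varphi_x$ on $V$ will follow from Remark \ref{capa}: the translation $\mu_{e^{-s}}$ is an isometry, so $\varphi_x(u_1) = \varphi_x(u_2)$ reduces to $q(e^{u_1}) = q(e^{u_2})$, hence $u_1 - u_2 \in \Gamma$, and the bound $\|u_1 - u_2\|_{\p} < \kappa_M$ then forces $u_1 = u_2$.

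For smoothness of the chart transitions I plan to reduce to compositions of the form $\varphi_o^{-1} \circ \varphi_x$, since any transition $\varphi_{x_2}^{-1} \circ \varphi_{x_1}$ factors as $(\varphi_o^{-1} \circ \varphi_{x_2})^{-1} \circ (\varphi_o^{-1} \circ \varphi_{x_1})$. The assignment $u \mapsto \beta(1)$ constructed above is precisely $\varphi_o^{-1} \circ \varphi_x$, and the explicit ODE $\dot{\beta} = F_{s,u}(\beta)$ with $\beta(0) = s$ derived in the proof of Proposition \ref{triples}, together with the facts that $F_{s,u}$ is Lipschitz, preserves $\s$, and depends smoothly on the parameters $(s,u) \in \s \times \s$, gives via standard Banach-space ODE theory smooth dependence of $\beta(1)$ on $(s,u)$. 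Composing smooth maps yields smoothness of transitions in the Banach topology of $\s$, which may of course be strictly finer than the subspace topology inherited from $M$.

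Finally, differentiating $\varphi_x(u) = q(e^s e^u)$ at $u = 0$ along $u \in \s$ gives $(\varphi_x)_{*0}(u) = (\mu_{e^s})_{*o} q_{*1} u = (\mu_{e^s})_{*o} u$ (using that $q_{*1}$ restricts to the identity on $\p$ as recalled in Remark \ref{expogrupo}), so $T_x C = (\mu_{e^s})_{*o} \s$. For any $v = (\mu_{e^s})_{*o} u \in T_x C$ the geodesic $t \mapsto \exp_x(tv) = q(e^s e^{tu})$ stays in $C$ by the well-definedness step, yielding $\exp_x(T_x C) \subset C$ and hence total geodesicity. The main obstacle will be the rigorous handling of smoothness of transitions: one must verify that the implicit relation $q(e^{\beta(1)}) = q(e^s e^u)$ defines $\beta(1)$ smoothly in $(s,u) \in \s \times \s$, which rests crucially on the invariance $F_{s,u}(\s) \subset \s$ supplied by the Lie triple condition in Proposition \ref{triples}; everything else amounts to routine use of the covering theorem and of the injectivity radius $\kappa_M/2$.
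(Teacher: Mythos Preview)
Your overall plan—verify that $\varphi_x$ lands in $C$ via item~4 of Proposition~\ref{triples}, obtain injectivity from Remark~\ref{capa}, and read off the tangent space by differentiating the chart—matches the paper's argument, and the tangent-space computation in your last paragraph is correct. The point of departure is your handling of the smoothness of transition maps, where there is a genuine gap.

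You propose to factor every transition $\varphi_{x_2}^{-1}\circ\varphi_{x_1}$ through $\varphi_o$. But $\varphi_o$ is a chart with domain $V\cap\s$, so $\varphi_o^{-1}$ is only defined on $\exp_o(V\cap\s)$, a ball of radius $<\kappa_M/2$ about $o$. For $x=q(e^s)$ with $\|s\|_{\p}$ large, $\varphi_x(u)=q(e^se^u)$ need not lie in that ball, so $\varphi_o^{-1}\circ\varphi_x$ is not literally defined. Even if you reinterpret this as the global lift $\Psi_x(u):=\beta(1)$ produced by the ODE, the identity $\varphi_{x_2}^{-1}\circ\varphi_{x_1}=\Psi_{x_2}^{-1}\circ\Psi_{x_1}$ only holds modulo $\Gamma$ (since $\exp_o(\Psi_{x_1}(u))=\exp_o(\Psi_{x_2}(w))$ gives $\Psi_{x_1}(u)-\Psi_{x_2}(w)\in\Gamma$, not equality), and it presupposes that $\Psi_{x_2}$ is invertible on a domain you have not identified.

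The paper avoids this by not routing through $o$ at all. Since $\varphi_x=\exp_x|_{V\cap\s}$ is the restriction of the ambient chart $\exp_x|_V$, and the latter is a diffeomorphism onto its image by Remark~\ref{capa}, the ambient transition $\exp_y^{-1}\circ\exp_x$ is automatically a smooth map between open subsets of $\p$. The only issue is then that this map carries $V\cap\s$ into $\s$, and for that the paper invokes item~3 of Proposition~\ref{triples}: the differential of $\exp$ at any $v\in\s$ is $F(\ad v)$, which is an isomorphism of $\s$. Once one knows that a smooth map $\p\to\p$ restricts to a map $\s\to\s$, the restriction is smooth as a map of Banach spaces (derivatives lie in $\s$ because $\s$ is closed, and continuity of higher derivatives is inherited). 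This sidesteps the ODE parameter-dependence machinery entirely. Your argument can be repaired, but the cleanest fix is to run the ODE argument directly for the pair $(x_1,x_2)$ rather than factoring through the base point $o$.
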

\begin{proof}
For the first statement note that $\exp_x(\s)\subset C$ by Proposition \ref{triples}, and note that $\exp_x\vrule_V$ gives an isomorphism $\exp_x\vrule_V:V\to \exp_x(V)\subset M$ by Remark \ref{capa}. Then the proposed charts are bijective, and moreover the transition maps give isomorphisms between open neighborhoods of $\s$ since the exponential of $M$ is a local isomorphism and $\s$ is a closed linear subspace of $\p$ which (by Proposition \ref{triples}) is stable for the action of the differential of the exponential map at $x=q(e^v)$, given by $F(\ad v)=\frac{\sinh\ad v}{\ad v}$ by Remark \ref{expogrupo}. Then $C$ with the topology and differentiable structure induced by the atlas is an immersed submanifold since $\s\subset \p$ is closed.

The second assertion is elementary, and its proof follows combining $1.$ with Proposition \ref{triples}.
\end{proof}

\begin{rem}
If $G_C\subset G$ is a connected, involutive Banach-Lie group, with Banach-Lie algebra $\g_C\subset \g$, then $\sigma$ allows us to write $\g_C=\p_C\oplus \k_C$, where $\p_C=\p\cap \g_C$ and $\k_C=\k\cap \g_C$. Then $q(G_C)=q(e^{\p_C})\subset M$ is a totally geodesic immersed submanifold.
\end{rem}

\begin{defi}
Let $[\s,\s]$ stand for the closure of the linear span of the elements $[v,w]\in \g$, where $v,w\in \s$. Then $\s\cap [\s,\s]=\{0\}$ since $\s\subset \p$ and $[\s,\s]\subset \k$. Let us agree to call a Banach-Lie algebra $\g_C\subset \g$ \textit{involutive} if $\sigma_{*1}\g_C=\g_C$, and a connected Banach-Lie group $G_C\subset G$ involutive if $\sigma(G_C)=G_C$, or equivalently, if its Lie algebra is involutive. Let $p\in {\cal B}(\p)$ be an idempotent, $p^2=p$. Let $\s=Ran(p)$, $\s'=Ran(1-p)$, so $\p=\s\oplus\s'$. In this case, we say that $\s$ is \textit{split} in $\p$. We say that $C=q(e^{\s})$ is a reductive submanifold if $C$ is totally geodesic and, in addition, $\ad^2_{\s}(\s')\subset \s'$.
\end{defi}

See Remark \ref{clasica} for a brief discussion on these definitions in the classical (Riemannian, finite dimensional) setting, see also item 6. in the following proposition.

\begin{prop}\label{grupejo}
Let $M=G/K$ be a connected manifold, with semi-negative curvature. Let $\s\subset \p$ be a closed linear space. Assume that $\ad^2_{\s}(\s)\subset\s$ and let $\g_{\s}=\s\oplus [s,s]$. 
\begin{enumerate}
\item $\g_{\s}$ is an involutive Banach-Lie algebra and it can be enlarged to a connected involutive Banach-Lie group $G_{\s}\hookrightarrow G$. 
\item Let $K_{\s}=K\cap G_{\s}$. If $C=q(e^{\s})$, then  $G_{\s}/K_{\s}\simeq C$, and $C$ is a totally geodesic, immersed submanifold of $M$.
\item The group $G_{\s}$ acts isometrically and transitively on $C$. 
\item $M$-parallel transport along geodesics in $C$ preserves tangent vectors of $C$.
\item $C$ is a split submanifold if and only if $\s$ is split in $\p$.
\item Let $\k_{\s}=[\s,\s]$, and let $K_C\hookrightarrow G_{\s}$ stand for the Banach-Lie group generated by $\k_{\s}$. Then $C$ is reductive if and only if $Ad_{K_C}$ is a group of isometries of both $\s$ and $\s'$.
\item If $C$ is an embedded submanifold of $M$, then $K_{\s}$ is a Banach-Lie subgroup of $G_{\s}$, $K_C$ is the connected component of the identity of $K_{\s}$, and $G_{\s}/K_{\s}\simeq C$ as homogeneous spaces.
\end{enumerate}
\end{prop}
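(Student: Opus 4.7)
The plan is to verify the seven items in sequence, leveraging the Lie triple hypothesis $\ad_\s^2(\s)\subset\s$ and the polar decomposition of Theorem \ref{fibras}; the main technical obstacle should appear in item (6). For (1), the Lie subalgebra property of $\g_\s=\s\oplus[\s,\s]$ reduces to three inclusions: $[\s,\s]\subset\k$ by definition, $[[\s,\s],\s]\subset\s$ by the Jacobi identity applied to the Lie triple hypothesis (as in the proof of Proposition \ref{triples}), and $[[\s,\s],[\s,\s]]\subset[\s,\s]$ again by Jacobi. Involutivity is clear since $\sigma_{*1}|_\s=-\mathrm{id}$ and $\sigma_{*1}|_{[\s,\s]}=+\mathrm{id}$. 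I would then take $G_\s\hookrightarrow G$ to be the connected Banach-Lie subgroup generated by $e^{\g_\s}$, with Lie algebra $\g_\s$. For (2), applying Theorem \ref{fibras} to the symmetric pair $(G_\s,\sigma|_{G_\s})$ gives the polar decomposition $(v,k)\mapsto e^v k$ with $K_\s=K\cap G_\s$, so that $q(G_\s)=q(e^\s)=C$ and $C\simeq G_\s/K_\s$ as manifolds; the totally geodesic and immersed character is Corollary \ref{exponentialset}.

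Item (3) is immediate, since $G_\s\subset G$ acts by isometries on $M$, preserves $C$, and acts transitively: $x=q(e^s)=\mu_{e^s}(o)$ for any $s\in\s$. For (4), use the parallel transport formula $P_0^t(\alpha)=(\mu_{e^se^{tv}e^{-s}})_{*q(e^s)}$ from Remark \ref{adk} applied to a geodesic $\alpha(t)=q(e^se^{tv})$ with $s,v\in\s$: chasing through the identification $T_xC=(\mu_{e^s})_{*o}\s$ from Corollary \ref{exponentialset}, a tangent vector $w=(\mu_{e^s})_{*o}\xi\in T_xC$ is transported to $(\mu_{e^se^{tv}})_{*o}\xi\in T_{\alpha(t)}C$. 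Item (5) follows from the fact that $(\mu_g)_{*o}:\p\to T_{q(g)}M$ is a Banach-space isomorphism mapping $\s$ onto $T_{q(g)}C$, so a bounded idempotent onto $T_xC$ exists for some (equivalently, every) $x\in C$ if and only if one exists onto $\s$ in $\p$.

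The main technical content is item (6). Since $K_C$ is connected with Lie algebra $\k_\s=\overline{[\s,\s]}$, $Ad_{K_C}$ preserves $\s$ (resp.\ $\s'$) if and only if $[\k_\s,\s]\subset\s$ (resp.\ $[\k_\s,\s']\subset\s'$); the former is automatic from item (1), and the $Ad_K$-invariance of $\|\cdot\|_\p$ promotes invariance to isometry. The crux is therefore the equivalence $\ad_\s^2(\s')\subset\s'\Longleftrightarrow[[\s,\s],\s']\subset\s'$. Polarizing $\ad_v^2(s')\in\s'$ in $v\in\s$ yields the symmetric combination $(\ad_v\ad_w+\ad_w\ad_v)(s')\in\s'$ for all $v,w\in\s$ and $s'\in\s'$, while the antisymmetric combination equals $\ad_{[v,w]}(s')$; bridging the symmetric and antisymmetric parts is the delicate step, and I expect it will require exploiting how $[\s,\s']\subset\k$ interacts with $\k_\s$ in light of the Lie triple structure. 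Finally, item (7) follows because if $C$ is embedded in $M$, then $K_\s=\{g\in G_\s:q(g)=o\}$ is closed in $G_\s$ as the preimage of the closed point $\{o\}\subset C$, hence a Banach-Lie subgroup of $G_\s$ with Lie algebra $\k_\s$, whose identity component is $K_C$ by construction, and item (2) upgrades to a diffeomorphism $G_\s/K_\s\simeq C$ of Banach homogeneous spaces.
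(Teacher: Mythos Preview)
Your overall strategy matches the paper's closely for items (1), (3), (4), (5), and (7). Two points deserve comment.

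\medskip

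\textbf{Item (2).} The paper does not invoke Theorem~\ref{fibras} on the subgroup $G_\s$. Instead it argues directly: any $g\in G_\s$ is a finite product $\prod e^{s_i}e^{k_i}$ with $s_i\in\s$, $k_i\in[\s,\s]$, and one slides the $e^{k_i}$ factors to the right using $e^{k}e^{s}=e^{Ad_k s}\,e^{k}$ together with $Ad_{e^{[\s,\s]}}\s\subset\s$ (from Proposition~\ref{triples}). This yields $q(g)=q(\prod e^{s_i'})\in C$ without needing the polar map on $G_\s$. Your route via Theorem~\ref{fibras} is legitimate but requires first checking that $G_\s/K_\s$ itself has semi-negative curvature; this follows from Proposition~\ref{criterio} once you note (Proposition~\ref{triples}(3)) that $F(\ad v)$ restricts to an expansive isomorphism of $\s$, but you should say so explicitly.

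\medskip

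\textbf{Item (6).} You are right that this is where the content lies, and your diagnosis is accurate: polarization of $\ad_v^2(\s')\subset\s'$ yields only the symmetric combination $(\ad_v\ad_w+\ad_w\ad_v)(\s')\subset\s'$, whereas $Ad_{K_C}$-invariance of $\s'$ is the antisymmetric statement $\ad_{[v,w]}(\s')\subset\s'$. The Jacobi identity relates them via $2\,\ad_v\ad_w(s')=\ad_{[v,w]}(s')+(\ad_v\ad_w+\ad_w\ad_v)(s')$, but this only shows the two conditions are equivalent \emph{to} $\ad_v\ad_w(\s')\subset\s'$ for all $v,w\in\s$ separately, which is not what either hypothesis gives directly. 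The paper's own proof here is a single sentence (``$C$ is reductive if and only if $\s$ and $\s'$ are $\ad_{[\s,\s]}$-invariant'') and does not supply the missing bridge either; so your instinct that something is being glossed over is correct, and you should not expect a hidden elementary trick---the paper simply asserts the equivalence. In practice the condition used downstream (e.g.\ in Lemma~\ref{laproyeslipsitz}) is the $\ad_\s^2$ form from the definition, so the gap in (6) does not propagate.
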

\begin{proof}
That $\g_C$ is a Lie algebra follows from the Jacobi identity. Since it is a subalgebra of $\g$, which is the Banach-Lie algebra of the Banach-Lie group $G$, it can be integrated as claimed \cite{swie}, and this settles $1.$

To prove $2$, note that if $g\in G_{\s}$ then $g=\prod e^{s_i} e^{k_i}$, where $s_i\in \s$ and $k_i\in [\s,\s]$. Then $q(g)=q(\prod e^{s_i'})$, where $s_i'\in \s$ since
$$
e^{k_i}e^{s_{i+1}}e^{k_{i+1}}=e^{Ad_{e^{k_i}}(s_{i+1})}e^{k_i}e^{k_{i+1}},
$$
and on the other hand $Ad_{ e^{[v,w]}  }  s=e^{  ad_{[v,w]}  }s\in \s$ if $v,w\in\s$ by Proposition \ref{triples}. Then there exists $s\in\s$ such that $q(g)=q(e^s)\in C$ by Proposition \ref{triples}. Then $q\,\vrule_{G_{\s}}$ gives the isomorphism of $G_{\s}/K_{\s}$ with $C$. That $C$ is a totally geodesic immersed submanifold follows from Corollary \ref{exponentialset}. 

To prove 3, note that the transitive and isometric action of $G_{\s}$ is given by the maps $\mu_g$, with $g\in G_{\s}$: if $v\in \s$, then $\mu_g(q(e^v))=q(ge^v)=q(\prod e^{s_i} e^{k_i} e^v)=q( e^{s'_i}e^{v'})$ by the argument above, where $s_i', v'\in\s$, and then $\mu_g(q(e^v))\in C$ by Proposition \ref{triples}. 

To prove 4, recall (Remark \ref{adk}) that $M$-parallel transport along $\alpha(t)=q(e^se^{tv})$ is given by 
$$
(\mu_{e^se^ve^{-s}})_{*q(e^s)}.
$$
Then if $s,v\in \s$, parallel transport along $\alpha$ from $\alpha(0)=q(e^s)$ to $\alpha(1)=q(e^se^v)$ of a vector $(\mu_{e^s})_{*o}w\in T_xC$ gives $(\mu_{e^se^v})_{*o}w$. By Proposition \ref{triples}, there exists $l\in \s$ and $k\in K$ such that $e^l=e^se^v$, and then
$$
P_0^1(\alpha)(\mu_{e^s})_{*o}w =(\mu_{e^l})_{*o} Ad_k  w.
$$
But $Ad_k w= e^{-\ad l}e^{\ad s}e^{\ad v}w \in \p\cap \g_{\s}$, hence $Ad_k w\in \s$, which proves that $P_0^1(\alpha)$ maps $T_{\alpha(0)}C$ to $T_{\alpha(1)}C$.

Item 5 is obvious: $C$ is a split submanifold if and only if $\s$ is slit in $\p$. 

To prove 6, note that each $k\in K_C$ can be written as a finite product $k=\prod e^{l_i}$, with $l_i\in [\s,\s]$. Then $C$ is reductive if and only if $\s$ and $\s'$ are $\ad_{[\s,\s]}$-invariant. 

Finally, if $C$ is an embedded submanifold of $M$, then $q^{\s}=q\vrule_{\s}$ gives the topological identification $G_{\s}/K_{\s}=C$, and inspection of the action of $q^{\s}_{*1}$ shows that $K_{\s}$ is a Banach-Lie subgroup of $G_{\s}$ with Banach-Lie algebra $[\s,\s]$.

\end{proof}

\begin{prop}\label{cuasi}(Locally convex sets)
Let $C=q(e^{\s})$ be an exponential set in $M$. Then the following statements are equivalent, and we call $C$ a locally convex set.
\begin{enumerate}
\item There exists $0<\varepsilon<\kappa_M/2$ such that if $x,y\in C$ and $d(x,y)<\varepsilon$, then if $\alpha(t)=q(e^ve^{tz})$ is the unique short geodesic of $M$ joining $x$ to $y$, then $z\in \s$ and moreover $\alpha\subset C$.
\item There exists $0<\delta<\kappa_M/2$ such that $d(\Gamma-\Gamma\cap\s,\s)\ge \delta$.
\item There exists $0<R<\kappa_M/2$ such that if $U=\{v\in \p: \|v\|_{\p}<R\}$, then $\exp_x(U)\cap C=\exp_x(U\cap \s)$ for any $x\in C$.
\end{enumerate}
\end{prop}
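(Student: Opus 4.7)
The plan is to prove the cycle $3\Rightarrow 1\Rightarrow 2\Rightarrow 3$. Three ingredients will be used repeatedly: Theorem \ref{fibras}, which translates coincidences $q(e^v)=q(e^w)$ into the linear relation $v-w\in\Gamma$; the definition of $\kappa_M$ from Remark \ref{capa}, by which any element of $\Gamma$ of norm less than $\kappa_M$ must be $0$ and shortest lifts of length $<\kappa_M/2$ are unique; and the total geodesicity of $\s$ given by Corollary \ref{exponentialset} together with item (4) of Proposition \ref{triples}.

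For $3\Rightarrow 1$, I fix $\varepsilon\le R$ with $\varepsilon<\kappa_M/2$. Given $x,y\in C$ with $d(x,y)<\varepsilon$, I write $x=q(e^v)$ with $v\in\s$ and let $z\in\p$ be the unique shortest lift with $y=\exp_x(z)$; so $z\in U$. Condition $3$ furnishes $s\in U\cap\s$ with $\exp_x(s)=\exp_x(z)$; by Theorem \ref{fibras} then $z-s\in\Gamma$, and $\|z-s\|_{\p}<2R<\kappa_M$ forces $z=s\in\s$. Total geodesicity of $\s$ now gives $\alpha(t)=q(e^ve^{tz})\subset C$.

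For $1\Rightarrow 2$, I argue by contradiction: suppose $\gamma\in\Gamma\setminus\s$ admits some $s_0\in\s$ with $\|\gamma-s_0\|_{\p}<\varepsilon/2$. Using $\gamma\in Z(\g)$ together with $e^\gamma\in K$ (a consequence of $q(e^\gamma)=o$), the point $y:=q(e^{\gamma-s_0})$ equals $q(e^{-s_0})\in C$ and has $d(o,y)<\varepsilon/2$. Condition $1$ yields a short-geodesic direction $z\in\s$ with $y=q(e^z)$ and $\|z\|_{\p}<\varepsilon/2$; now $z$ and $\gamma-s_0$ are two lifts of $y$ from $0$, so their difference lies in $\Gamma$ with norm $<\varepsilon<\kappa_M$ and must vanish, giving $\gamma=s_0+z\in\s$, a contradiction. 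For $2\Rightarrow 3$ I choose $R=\min(\delta,\kappa_M/4)$ and reduce to $x=o$ via the isometry $\mu_{e^{-u}}$, which preserves $C$ by Proposition \ref{grupejo}. The inclusion $\exp_o(U\cap\s)\subseteq\exp_o(U)\cap C$ is automatic from total geodesicity; conversely, if $w\in U$ with $\exp_o(w)\in C$ then $q(e^w)=q(e^{u'})$ for some $u'\in\s$, so $\gamma:=w-u'\in\Gamma$, and the alternative $\gamma\notin\s$ would yield $\delta\le d(\gamma,\s)\le\|\gamma+u'\|_{\p}=\|w\|_{\p}<R\le\delta$, hence $\gamma\in\Gamma\cap\s$ and $w\in\s$.

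The main technical delicacy is the compatibility of the constants $\varepsilon,\delta,R,\kappa_M$: at each step a factor-of-two shrinkage ensures that the $\Gamma$-correction produced by Theorem \ref{fibras} is trapped in a ball of radius $<\kappa_M$, where only $0$ survives. The only genuinely non-routine input is the use of the centrality $\Gamma\subset Z(\g)$ in $1\Rightarrow 2$, which lets us detach $e^\gamma\in K$ cleanly from $e^{-s_0}$ and thereby realize a $\s$-close element of $\Gamma$ as a lift of a point of $C$ near the base point.
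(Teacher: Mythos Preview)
Your proof is correct and follows essentially the same cycle $3\Rightarrow 1\Rightarrow 2\Rightarrow 3$ as the paper, with the same key mechanism at each step: produce a $\Gamma$-correction via Theorem~\ref{fibras} and kill it by trapping its norm below $\kappa_M$. The only cosmetic differences are that in $2\Rightarrow 3$ you reduce to $x=o$ via the $G_{\s}$-action (the paper works at a general $x$ using Proposition~\ref{triples} to pull back to $o$), and in $1\Rightarrow 2$ you phrase the contrapositive as a contradiction and make the centrality $\Gamma\subset Z(\g)$ explicit, which the paper uses implicitly when writing $q(e^{s-z_0})=q(e^s)$.
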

\begin{proof}
Assume that $2.$ does not hold. Then, given $0<\varepsilon<\kappa_M/2$, there exists $z_0\in \Gamma-\s$ such that $d(z_0,\s)<\varepsilon/2$. Take $s\in \s$ such that $\|s-z_0\|_{\p}\le\varepsilon$. Let $w=s-z_0\notin\s$, $x=o$, $y=q(e^w)=q(e^s)\in C$. Then $d(x,y)=\|w\|_{\p}=\varepsilon$ by Remark \ref{capa}, so $\alpha(t)=q(e^{tw})$ is the unique short geodesic of $M$ joining $x$ to $y$. But $\alpha$ does not have initial speed in $\s$, so $1.$ does not hold. 

Now assume that $2.$ holds for some $0<\delta<\kappa_M/2$, and let $x=q(e^s)\in C$. Take $R=\delta$, and note that the inclusion $\exp_x(U\cap \s)\subset \exp_x(U)\cap C$ always holds due to Proposition \ref{triples}. Let $v\in U$, and assume that $q(e^se^v)\in C$, namely $q(e^se^v)=q(e^w)$ with $w\in \s$. Then there exists $s'\in\s$ (again due to Proposition \ref{triples}) such that $q(e^v)=q(e^{-s}e^w)=q(e^{s'})$. Then there exists $z\in\Gamma $ such that $s'-v=z$. If $z\in \s$, we are done since $q(e^se^v)=q(e^se^{s'-z})\in \exp_x(U\cap \s)$. If $z\notin \s$, then $\delta \le \|s'-z\|_{\p}=\|v\|_{\p}<R=\delta$ which is absurd, so $z\in \s$. This shows that $2.$ implies $3.$

Assume that $3.$ holds for some $R>0$, and let $\varepsilon =R$. Let $x=q(e^v), y=q(e^w)\in C$ with $d(x,y)<\varepsilon$, let $\alpha(t)=q(e^ve^{tz})$ be the unique short geodesic of $M$ joining $x$ to $y$, namely $\|z\|_{\p}=d(x,y)$ and $q(e^ve^z)=q(e^w)$. Then, due to $3.$, there exists $s\in U\cap \s$ such that $q(e^ve^z)=q(e^ve^l)$, hence there exists $z_0\in \Gamma $ such that $z-l=z_0$. Since $\|z_0\|_{\p}\le \|z\|_{\p}+\|l\|_{p}<2R$, then $z_0=0$ and $z=l\in \s$. That $\alpha\subset C$ follows from Proposition \ref{triples}, so we have shown that $3.$ implies $1.$
\end{proof}

\begin{coro}\label{cerrada}
Let $C=q(e^{\s})$ be a locally convex set in $M$, let $U\subset\p$ be an open ball around $0$ of radius $R$, where $R$ is as in the previous proposition. Then:
\begin{enumerate} 
\item The set $C$ is an embedded submanifold of $M$, $\exp_x\vrule_{U\cap \s}:U\cap \s\to C\cap \exp_x(U)$ is a topological isomorphism when $C$ is given the subspace topology. It is also a diffeomorphism which gives an atlas which makes of $M$ and immersed embedded submanifold of $C$.
\item With the induced spray and metric, $C$ is a Banach-Finsler manifold with spray of semi-negative curvature, with exponential map $\exp_x^C=\exp_x\vrule_{\s
}$ given by restriction. The fundamental group of $C$ is given by $\Gamma_{\s}=\Gamma\cap \s$, and $C\subset M$ is a closed metric subspace. 
\item If $K_{\s}=K\cap G_{\s}$, then $K_{\s}$ is a Banach-Lie subgroup of $G_{\s}$, and $C\simeq G_{\s}/K_{\s}$ as homogeneous spaces.
\end{enumerate}
\end{coro}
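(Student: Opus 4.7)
\textbf{Plan for Corollary \ref{cerrada}.} The corollary packages three distinct claims: (1) the embedded manifold structure on $C$; (2) the geometric content (spray, exponential, curvature sign, fundamental group, closedness); and (3) the homogeneous space identification $C\simeq G_{\s}/K_{\s}$. The plan is to use Proposition \ref{cuasi}(3) to install charts, read off (2) from the fact that $C$ is totally geodesic (Corollary \ref{exponentialset}), and then reduce (3) to Proposition \ref{grupejo}(7) once embeddedness is in hand.

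First I would fix $R$ as in Proposition \ref{cuasi}(3) and let $U\subset\p$ denote the open ball of radius $R<\kappa_M/2$. For each $x\in C$, Remark \ref{capa} guarantees that $\exp_x|_U:U\to\exp_x(U)\subset M$ is a diffeomorphism onto an open set of $M$; Proposition \ref{cuasi}(3) then says it restricts to a bijection $\exp_x|_{U\cap\s}:U\cap\s\to\exp_x(U)\cap C$. Since $\s$ is a closed subspace of $\p$, these restricted charts model $C$ on a Banach space, and the transition maps, being restrictions of the ambient transition maps, are automatically smooth. Moreover $\exp_x(U)\cap C$ is open in the subspace topology, which gives the topological embedding $C\hookrightarrow M$. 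This settles item 1.

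For item 2, since $C$ is totally geodesic (Proposition \ref{grupejo}(2) and Corollary \ref{exponentialset}), the spray $F$ on $TM$ preserves $TC$ and restricts to a spray on $C$, with $\exp_x^C=\exp_x|_{\s}$ under the identification $T_xC\simeq(\mu_{e^s})_{*o}\s$. Semi-negative curvature for $C$ reduces to checking invertibility of $(\exp_x^C)_{*v}$ on $\s$, which is Proposition \ref{triples}(3), and the inequality (\ref{semi}), which is inherited since $\|\cdot\|_{\s}$ is the restriction of $\|\cdot\|_{\p}$. The tangent norm is $Ad_{K_{\s}}$-invariant (as $K_{\s}\subset K$), so $C=G_{\s}/K_{\s}$ is a symmetric homogeneous space of semi-negative curvature and Theorem \ref{fibras} applies, yielding $\pi_1(C)\simeq\{z\in\s:q^C(e^z)=o\}$; since $q^C$ agrees with $q|_{G_{\s}}$ under the identification $C\leftrightarrow q(e^{\s})$, this subgroup equals $\Gamma\cap\s=\Gamma_{\s}$. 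For closedness of $C$ as a metric subspace of $M$, given $x_n\in C$ with $x_n\to x\in M$, I fix $n_0$ large enough that $d(x_{n_0},x_n)<R$ and $d(x_{n_0},x)<R$ for $n\ge n_0$. Writing $x_n=\exp_{x_{n_0}}(v_n)$ and $x=\exp_{x_{n_0}}(v)$ with $v_n,v\in U$, Proposition \ref{cuasi}(3) applied at the base point $x_{n_0}\in C$ forces $v_n\in U\cap\s$; continuity of the local inverse of $\exp_{x_{n_0}}$ gives $v_n\to v$ in $\p$, and closedness of $\s$ in $\p$ forces $v\in\s$, so $x\in C$.

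Item 3 is immediate once item 1 is established: Proposition \ref{grupejo}(7) then provides that $K_{\s}$ is a Banach-Lie subgroup of $G_{\s}$ and that the identification $G_{\s}/K_{\s}\simeq C$ is one of homogeneous spaces. The main delicate point I anticipate is the closedness argument: one has to make sure the base point $x_{n_0}$ lies in $C$ (so that Proposition \ref{cuasi}(3) can be invoked) while still capturing $x$ within the same chart, which is what the two-step bound $d(x_{n_0},x_n),d(x_{n_0},x)<R$ accomplishes. A secondary technical point is the identification of $q^C$ with $q|_{G_{\s}}$ used for the fundamental group, which relies on verifying that $G_{\s}$, with $\sigma|_{G_{\s}}$ and fixed-point subgroup $K_{\s}$, is itself a connected symmetric Banach-Lie group to which Theorem \ref{fibras} applies.
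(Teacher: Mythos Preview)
Your proposal is correct and follows essentially the same architecture as the paper's proof: use Proposition \ref{cuasi}(3) and Remark \ref{capa} to install the embedded-submanifold charts, read off the semi-negative curvature from Proposition \ref{triples}(3) and the restriction of the ambient norm, invoke Theorem \ref{fibras} for $\pi_1(C)=\Gamma\cap\s$, and then apply Proposition \ref{grupejo}(7) for item 3. Your closedness argument is in fact a mild streamlining of the paper's: the paper shows $\{v_n\}$ is Cauchy in $\s$ via the estimate $\|v_n-v_m\|_{\p}\le d(z_n,z_m)$ from Proposition \ref{loconvex} and uses completeness of $\s$, whereas you go directly through continuity of $\exp_{x_{n_0}}^{-1}$ on the ball $\exp_{x_{n_0}}(U)$ (valid since $R<\kappa_M/2$) and closedness of $\s$ in $\p$; both are fine and yield the same conclusion.
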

\begin{proof}
That $C$ is an embedded submanifold follows from the fact that if $V\subset U$ is  open in $\p$, then $\exp_x(V)\cap C=\exp_x(V\cap \s)$, because $\exp_x(V)\subset \exp_x(U)$ and then (put $x=q(e^v)$ with $v\in \s$), $q(e^ve^z)\in C$ for $z\in V$ implies $q(e^z)=q(e^s)$ for some $s\in U\cap \s$, so $z=s$ since $z,s\in U$ and $2R<\kappa_M$. 

That $\exp_x\vrule_{U\cap \s}:U\cap \s\to C\cap \exp_x(U)$ is a diffeomorphism follows from Proposition \ref{cuasi}.

The second assertion follows from the fact that the norm of $C$ is compatible since $C$ and $M$ share the topology, and the exponential map of $C$ is just the restriction of the exponential map of $M$, and then at each point its differential is an invertible expansive operator. Then Theorem \ref{fibras} applies.

Now we prove that $C\subset M$ is a closed subspace. If $x_n\to x$ with $x_n\in C$, take $n_0$ such as $d(x_n,x)<R/2$ for any $n\ge n_0$. Let $x_{n_0}=q(e^{v_{n_0}})$, and consider $z_n=\mu_{x_{n_0}}^{-1}x_n$, $z=\mu_{x_{n_0}}^{-1}x$. Since $d(x_n,x_{n_0})<R$, there exists $v_n\in \s\cap U$ such that $x_n=q(e^{v_{n_0}}e^{v_n})$ and $\|v_n\|_{\p}=d(z_n,o)<R$. Then $z_n=q(e^{v_n})\in C$, $d(z_n,z)\to 0$ and then $d(z_n,z_m)<R$. Hence $\|v_n-v_m\|_{\p}\le d(z_n,z_m)$ by Proposition \ref{loconvex}. Since $\s$ is complete, there exists $v_0\in \s$ such that $v_n\to v_0$. Let $z_0=q(e^{v_0})\in C$. Then $d(z,z_0)\le d(z,z_n)+d(z_n,z_0)=d(x,x_n)+d(q(e^{v_n}),q(e^{v_0}))$, hence $z=z_0\in C$, so $x=\mu_{x_{n_0}}(z)\in C$.

The last assertion follows from Proposition \ref{grupejo}, since $C$ is an embedded submanifold.
\end{proof}

\begin{prop}(Convex sets)
Let $C=q(e^{\s})$ be a locally convex set in $M$. Then the following statements are equivalent, and we call $C$ a convex set:
\begin{enumerate}
\item $C$ is geodesically convex: if $x,y\in C$, any geodesic of $M$ joining $x$ to $y$ is entirely contained in $C$.
\item $\Gamma$ is an additive subgroup of $\s$.
\item For any $x\in C$, $\exp_x(v)\in C$ implies $v\in \s$. In particular $\exp_x\vrule_{\s}$ is a global chart of $C$, and $C$ is an immersed embedded submanifold of $M$.
\end{enumerate}
\end{prop}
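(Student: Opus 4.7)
The plan is to close the cycle $1\Rightarrow 2\Rightarrow 3\Rightarrow 1$, leaning on the kernel description from Theorem \ref{fibras}, the short-geodesic uniqueness from Remark \ref{capa}, the exponential-set property of Proposition \ref{triples} (especially item $4$, which confines lifts of $q(e^{v}e^{tw})$ with $v,w\in\s$ to $\s$), and the local convexity radius $\varepsilon<\kappa_M/2$ from Proposition \ref{cuasi}(1). Since $C$ is already locally convex, Corollary \ref{cerrada} makes it an embedded submanifold from the outset, so the task reduces to tracking when ambient geodesic data is trapped inside $\s$.

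For $1\Rightarrow 2$: given $z\in\Gamma$, the curve $\alpha(t)=q(e^{tz})$ is a geodesic loop at $o$ (since $q(e^z)=o$), so geodesic convexity places the whole loop in $C$. For $t>0$ small enough that $t\|z\|_{\p}<\varepsilon$, Remark \ref{capa} identifies $tz$ as the initial velocity of the unique short geodesic of $M$ from $o$ to $\alpha(t)$, and Proposition \ref{cuasi}(1) then forces $tz\in\s$. Hence $z\in\s$, i.e. $\Gamma\subset\s$.

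For $2\Rightarrow 3$: let $x=q(e^s)\in C$ with $s\in\s$, and suppose $\exp_x(v)=q(e^se^v)\in C$ for some $v\in\p$. Writing $q(e^se^v)=q(e^{s'})$ with $s'\in\s$ and applying Proposition \ref{triples}(4) to the curve $q(e^{-s}e^{ts'})$ whose lift starts at $-s\in\s$ produces $s''\in\s$ with $q(e^{-s}e^{s'})=q(e^{s''})$; then $q(e^v)=q(e^{-s}e^{s'})=q(e^{s''})$, and Theorem \ref{fibras}(1) gives $v-s''\in\Gamma\subset\s$, whence $v\in\s$. The ``in particular'' part then follows: $\exp_x$ sends $\s$ into $C$ by Proposition \ref{triples} and surjects onto $C$ since $C=q(e^{\s})$, and combined with the embedded structure from Corollary \ref{cerrada} it parameterises $C$ globally as $\s/(\Gamma\cap\s)$. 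Finally, for $3\Rightarrow 1$: pick $s_0\in\s$ with $x=q(e^{s_0})$ and write any geodesic of $M$ from $x$ to $y$ as $\alpha(t)=q(e^{s_0}e^{tv})$; then $y=\exp_x(v)\in C$ gives $v\in\s$ by hypothesis, and Proposition \ref{triples}(4) keeps the lift of $\alpha$ in $\s$, so $\alpha\subset q(e^{\s})=C$.

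I expect the delicate step to be $1\Rightarrow 2$, where the global hypothesis on geodesics joining points of $C$ must be converted into a statement about the initial speed of a very short geodesic at $o$: one has to control the parameter $t$ simultaneously by $\varepsilon$ (so that local convexity applies) and by $\kappa_M/2$ (so that the short-geodesic lift is $tz$ itself rather than $tz-z_0$ for some competing $z_0\in\Gamma$). The remaining two implications are essentially bookkeeping with Theorem \ref{fibras} and the exponential-set machinery of Proposition \ref{triples}.
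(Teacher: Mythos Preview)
Your argument is correct and follows essentially the same cycle $1\Rightarrow 2\Rightarrow 3\Rightarrow 1$ as the paper. The only notable difference is in $1\Rightarrow 2$: the paper observes directly that since $C$ is an embedded submanifold with $T_oC=\s$ (Corollary \ref{cerrada}), the velocity $\dot\alpha(0)=z$ of the loop $\alpha(t)=q(e^{tz})\subset C$ must lie in $\s$; you instead restrict to small $t$ and invoke Proposition \ref{cuasi}(1) to force $tz\in\s$. Both are valid, and your worry about controlling $t$ simultaneously against $\varepsilon$ and $\kappa_M/2$ is unnecessary once you use the tangent-space identification, which absorbs that bookkeeping.
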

\begin{proof}
Assume first that $C$ is convex, and let $z\in \Gamma$. Then $\alpha(t)=q(e^{tz})$ joins $o$ to $o$, hence $\alpha\subset C$. In particular, since $T_oC=\s$ by the previous corollary, $\dot{\alpha}(0)=z\in \s$, so $\Gamma\subset \s$.

Assume now that $\Gamma\in \s$, let $x=q(e^s)\in C$, and let $v\in \p$ such that $q(e^se^v)\in C$, namely there exists $w\in \s$ such that $q(e^se^v)=q(e^w)$. Then by Proposition \ref{triples} there exists $s'\in \s$ such that $q(e^v)=q(e^{-s}e^w)=q(e^{s'})$. Since $v-s'\in \Gamma\subset \s$, then $v\in \s$. 

Let $x,y\in C$, let $\alpha(t)=q(e^ve^{tz})$ be a geodesic of $M$ joining $x=q(e^v)$ to $y$. If $3.$ holds, then at $t=1$ we obtain $z\in \s$ and then $\alpha\subset C$ by Proposition \ref{triples}.
\end{proof}

\begin{coro}
Let $C=q(e^{\s})$ be a convex submanifold in $M$. Then if $v,w\in \s$ and $\beta\subset T_oM\simeq \p$ is any lift of $\alpha(t)=q(e^ve^{tw})$, then $\beta\subset\s$.
\end{coro}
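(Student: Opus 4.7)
The plan is to combine the two structural facts we already have: the characterization of convex submanifolds via $\Gamma \subset \s$, and the lifting statement (item 4) in Proposition \ref{triples} for exponential sets. The only gap between them is showing that the initial value $\beta(0)$ of an arbitrary lift lies in $\s$; once this is in hand, Proposition \ref{triples}(4) closes the argument.

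First I would unpack what it means for $\beta \subset \p$ to be a lift of $\alpha(t) = q(e^v e^{tw})$. Under the identification $T_oM \simeq \p$, the exponential map $\exp_o$ coincides with $u \mapsto q(e^u)$ on $\p$, so a lift $\beta$ satisfies $q(e^{\beta(t)}) = \alpha(t)$ for all $t$. Evaluating at $t=0$ gives $q(e^{\beta(0)}) = q(e^v)$. By Theorem \ref{fibras}(1), this forces $\beta(0) - v \in \Gamma$.

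Now I would invoke the convexity hypothesis. Since $C$ is a convex submanifold, the equivalent condition in the Convex sets Proposition states that $\Gamma$ is an additive subgroup of $\s$. Hence $\beta(0) - v \in \Gamma \subset \s$, and since $v \in \s$ by assumption, we obtain $\beta(0) \in \s$.

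Finally, a convex set is in particular a locally convex set, hence an exponential set (by the definitions in Propositions \ref{cuasi} and \ref{triples}). The hypotheses of Proposition \ref{triples}(4) are therefore satisfied: $v, w \in \s$, $\beta$ is a lift of $\alpha(t) = q(e^v e^{tw})$, and $\beta(0) \in \s$. That proposition then gives $\beta \subset \s$, as desired. I do not anticipate any real obstacle here; the statement is essentially the observation that for convex $C$ the local conclusion of Proposition \ref{triples}(4) upgrades to arbitrary lifts, because the ambiguity in the choice of lift lives in $\Gamma$, which convexity confines to $\s$.
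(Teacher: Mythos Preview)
Your proof is correct and follows essentially the same route as the paper: both arguments reduce to showing $\beta(0)\in\s$ and then invoke Proposition~\ref{triples}(4). The only cosmetic difference is that the paper cites item~3 of the Convex sets Proposition (with $x=o$) to get $\beta(0)\in\s$, whereas you use the equivalent item~2 ($\Gamma\subset\s$) together with Theorem~\ref{fibras}(1).
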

\begin{proof}
Let $\beta\in \p$ be any lift of $\alpha(t)=q(e^ve^{tw})$. If $v,w\in \s$, then $\alpha\subset C$ by Proposition \ref{triples}, and moreover $q(e^{\beta(0)})=q(e^v)\in C$. If $C$ is convex, then $3.$ holds in the above proposition, and if we put $x=o$ we obtain $\beta(0)\in \s$, and then $\beta\subset \s$ by Proposition \ref{triples}.
\end{proof}

\subsection{Splitting theorems for expansive submanifolds}\label{splitheo}

In this section, we prove straightforward generalizations of the results due to Corach, Porta and Recht in \cite{cpr1,pr,pr4} for $C^*$-algebras, so we would like to refer to these splitting results as CPR splittings.

\medskip

In what follows, we assume  that $M=G/K$ is connected and complete, of semi-negative curvature. We also assume that $C=q(e^{\s})$ is a locally convex reductive submanifold of $M$.

\begin{defi}\label{reflectiva}
If $C=q(e^{\s})$ is a locally convex reductive submanifold, and in addition $\|p\|=1$ we say that $C$ is an \textit{expansive reductive} submanifold of $M$.
\end{defi}

\begin{rem}
Let $p\in {\cal B}(\p)$ be an idempotent with $\|p\|=1$. Then $\p=\s\oplus\s'$, where $\s=Ran(p)$, $\s'=Ker(p)$, and
$$
\|s\|_{\p}=\|p(s+s')\|_{\p}\le \|s+s'\|_{\p}
$$
for any $s\in\s$, $s'\in\s'$. This shows that $\|p\|=1$ if and only if $\s$ is a subset of the Birkhoff orthogonal of $\s'$, and there is a Banach space isometric isomorphism $\p/\s'\simeq \s$ when $\p/\s'$ is given the quotient norm. Moreover, it easy to check that the following are equivalent:
\begin{itemize}
\item $\|p\|=1$
\item $\s$ is the Birkhoff orthogonal of $\s'$.
\item $1-p=Q_{\s}$, where $Q_{\s}$ indicates the metric projection to $\s$.
\end{itemize}
Obviously the same assertions hold is we replace $p$ with $1-p$ and $\s$ with $\s'$. We call vectors in $\s'$ \textit{normal directions}.
\end{rem}

\begin{lem}\label{laproyeslipsitz}
Let $0<R\le \frac{\kappa_M}{8}$, let $x_0\in C$. Let $x,y\in B(x_0,R)\cap C$, and let $v,w\in \s'$ such that $\|v\|_{\p},\|w\|_{\p}<R$. Let $f:[0,+\infty)\to [0,+\infty)$ be the distance among the two normal geodesics,
$$
f(t)=d(\exp_x(tv),\exp_y(tw)).
$$
Then if $C$ is expansive, $f$ is increasing. If $f$ is increasing for any such $x,y\in C$, $v,w\in \s'$, then $C$ is expansive.
\end{lem}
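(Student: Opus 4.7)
The idea is to show that $f$ is convex on $[0,+\infty)$ and that $f'(0^+)\ge 0$; together, these are equivalent to $f$ being increasing. Convexity is global by Theorem \ref{convex} if $M$ is simply connected, and follows on a neighborhood of $0$ from Proposition \ref{loconvex} in general, the hypothesis $d(x,y)\le 2R\le\kappa_M/4$ together with $\|v\|_\p,\|w\|_\p<R$ being exactly what is needed. The entire lemma thus reduces to controlling the right derivative $f'(0^+)$.

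To compute it, use Proposition \ref{cuasi} to write $x=q(e^{s_x})$, $y=q(e^{s_y})$ with $s_x,s_y\in\s$, and Propositions \ref{triples} and \ref{grupejo} to obtain $z\in\s$ with $\|z\|_\p=d(x,y)$ together with $k\in K_C$ close to $1$ such that $e^{-s_x}e^{s_y}=e^{z}k$. The $G$-invariance of the distance reduces matters to $f(t)=\|\zeta(t)\|_\p$, where $\zeta(t)\in\p$ is the smooth lift of $q(e^{-tv}e^{z}e^{tAd_kw})$ with $\zeta(0)=z$. Differentiating the group identity $e^{\zeta(t)}=e^{-tv}e^{z}e^{tAd_kw}m(t)$ at $t=0$ (with $m(t)\in K$, $m(0)=1$), using the formula for $\Exp_{*v}$ in Remark \ref{expogrupo}, and separating the resulting equation into its $\p$- and $\k$-components gives a linear system in $\dot\zeta(0)$ and $\dot m(0)$ whose solution is
$$
\dot\zeta(0)=-G(\ad z)v+F(\ad z)^{-1}(Ad_kw),\qquad G(x):=x\coth x.
$$
Since $F(x)=\sinh(x)/x$ and $G(x)$ are even entire functions, the reductivity conditions $\ad^2_{z}\s'\subset\s'$ and $Ad_{K_C}\s'\subset\s'$ immediately give $\dot\zeta(0)\in\s'$.

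Write $J(z)\subset\p^*$ for the set of norming functionals of $z$. If $C$ is expansive, $\|p\|=1$ is equivalent to $\|s\|_\p\le\|s+s'\|_\p$ for all $s\in\s$, $s'\in\s'$, so any norming functional $\tilde\varphi\in\s^*$ of $z$ extends via $\varphi(u):=\tilde\varphi(pu)$ to some $\varphi\in J(z)$ with $\varphi\vrule_{\s'}\equiv 0$. The standard identity
$$
f'(0^+)=\max_{\psi\in J(z)}\psi(\dot\zeta(0))\ge\varphi(\dot\zeta(0))=0
$$
settles the direct implication. Conversely, fix $s_0\in\s$ and $s'\in\s'$ with norms less than $R$ and test the hypothesis with $x=o$, $y=q(e^{s_0})$, $v=s'$, $w=0$: then $z=s_0$, $k=1$, $\dot\zeta(0)=-G(\ad s_0)s'\in\s'$, and $f'(0^+)\ge 0$ gives $\max_\psi\psi(-G(\ad s_0)s')\ge 0$. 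Running the same argument with $-s'$ in place of $s'$ and using convexity of $J(s_0)$, some $\varphi_{s'}\in J(s_0)$ satisfies $\varphi_{s'}(G(\ad s_0)s')=0$. Because $G(\ad s_0)$ restricts to an isomorphism of $\s'$, the vector $G(\ad s_0)s'$ sweeps out $\s'$ as $s'$ does, so for every $\eta\in\s'$ some $\varphi_\eta\in J(s_0)$ vanishes at $\eta$. The set $J(s_0)\vrule_{\s'}$ is a weak-$*$ compact convex subset of $(\s')^*$, so Hahn-Banach separation (by an element of $\s'$) rules out $0\notin J(s_0)\vrule_{\s'}$, yielding $\varphi\in J(s_0)$ with $\varphi\vrule_{\s'}\equiv 0$. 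Then $\|s_0\|_\p=\varphi(s_0)=\varphi(s_0+s')\le\|s_0+s'\|_\p$ for every $s'\in\s'$; homogeneity of norms extends this to all of $\s$, giving $\|p\|=1$.

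The main hurdle is the explicit derivation of $\dot\zeta(0)$: careful bookkeeping of the $\p\oplus\k$-decomposition after differentiating the group identity produces a linear system whose inversion relies on the operators $F(\ad z)$, $\cosh(\ad z)$ and $G(\ad z)$ being invertible on $\p$ (Remark \ref{cosh}). The Hahn-Banach step in the converse, though standard, crucially uses weak-$*$ compactness of $J(s_0)$ to promote a family of pointwise-vanishing functionals into a single one that vanishes on all of $\s'$.
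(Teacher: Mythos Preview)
Your forward implication is essentially identical to the paper's: both reduce to convexity of $f$ (via Proposition~\ref{loconvex}) plus $f'(0^+)\ge 0$, compute the derivative $\dot\zeta(0)=-G(\ad z)v+F(\ad z)^{-1}(Ad_kw)$ with $G(x)=x\coth x$ (the paper writes $\dot l_0$), observe it lies in $\s'$ by reductivity, and then use that $\|p\|=1$ allows one to choose a norming functional $\varphi=\varphi_0\circ p\in J(z)$ annihilating $\s'$, whence $f'(0^+)\ge\varphi(\dot\zeta(0))=0$.

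For the converse, however, you take a genuinely different and more elaborate route. The paper tests with $v=0$ and $w\ne 0$ (rather than your $v=s',\,w=0$), which yields $\dot l_0=F(\ad l_0)^{-1}w_0$. Then from $0\le f'(0^+)\le\|l_0+\dot l_0\|_\p-\|l_0\|_\p$ together with the two elementary observations that $F(\ad l_0)^{-1}l_0=l_0$ (since $\ad_{l_0}l_0=0$) and that $F(\ad l_0)^{-1}$ is a contraction (as $F(\ad l_0)$ is expansive by semi-negative curvature), one gets in one line
\[
\|l_0\|_\p\;\le\;\|l_0+F(\ad l_0)^{-1}w_0\|_\p\;=\;\|F(\ad l_0)^{-1}(l_0+w_0)\|_\p\;\le\;\|l_0+w_0\|_\p,
\]
which is exactly $\|p\|=1$ after rescaling. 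Your choice $v=s',\,w=0$ produces $\dot\zeta(0)=-G(\ad s_0)s'$, and since $G=\cosh\cdot F^{-1}$ is neither expansive nor contractive this factorization trick is unavailable; you correctly compensate with a Hahn--Banach separation argument on the weak-$*$ compact convex set $J(s_0)\vrule_{\s'}$. Your argument is valid, but the paper's avoids duality and compactness altogether by exploiting the operator inequality for $F^{-1}$ directly.
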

\begin{proof}
We assume as always that $x_0=o$. Put $x=q(e^r)$, $y=q(e^s)$, with $\|s\|_{\p},\|r\|_{\p}<R$. Then $f(t)=d(q(e^re^{tv}),q(e^se^{tw}))$ is a convex function by Proposition \ref{loconvex}, and it is increasing if and only if $f'(0^+)=\lim_{t\to 0^+}\frac{f(t)-f(0)}{t}\ge 0$. Let $l_0\in \s$ be such that $q(e^{l_0})=q(e^{-r}e^s)$, and $\|l_0\|_{\p}=d(x,y)$ (such element exists by Proposition \ref{cuasi}). Let $k\in K$ be such that $e^{l_0}k=e^{-r}e^s$, let $\beta(t)=q(e^{-tv}e^{-r}e^se^{tw})=q(e^{-tv}e^{l_0}e^{tw'})$,  where $w'=Ad_kw\in \s'$. Note that $d(o,\beta(t))=f(t)\le t\|v\|_{\p}+R+t\|w\|_{\p}< \kappa_M/2$. Then if we put $l_t\in  \p$ the smooth lift of $\beta(t)$ to the ball $B(0,\kappa_M/2)$ in $\p$, we have  $q(e^{l_t})=q(e^{-tv}e^{l_0}e^{tw'})$, and $\|l_t\|_{\p}=d(o,\beta(t))=f(t)$ since $\|l_t\|_{\p}< \kappa_M/2$.

Let $\varphi_0\in\p^*$ be a linear functional such that $\|\varphi_0\|=1$, $\varphi_0(l_0)=\|l_0\|_{\p}=d(x,y)$, and let $\varphi=\varphi_0\circ p$. Then  $\varphi(\s')=\{0\}$. Let $g(t)=\varphi(l_t)$. Note that $g(0)=\varphi(l_0)=f(0)$. If $C$ is expansive, then $\varphi(l_t)\le f(t)$. Then
$$
\frac{f(t)-f(0)}{t}\ge \frac{g(t)-g(0)}{t},
$$
for $t>0$, and we  will show that $g'(0)=0$ to prove that $f$ is increasing. From $q(e^{l_t})=q(e^{-tv}e^{l_0}e^{tw'})$ we obtain
$$
\frac{\sinh\ad l_0}{\ad l_0}\dot{l}_0=q_{*1} (-e^{-\ad l_0}v+w')=w'-\cosh(\ad l_0)v,
$$
hence
$$
\dot{l}_0=F^{-1}(\ad l_0)w'-H(\ad l_0)v,
$$
with $F(z)=z^{-1}\sinh(z)$ and $H(z)=z\coth(z)$, which are both series in $z^2$. Then $g'(0)=\varphi(\dot{l}_0)=\sum\alpha_k \varphi(\ad_{l_0}^{2k} w')-\sum\beta_k\varphi(\ad_{l_0}^{2k} v)=0$ since $\ad_{l_0}^2(\s')\subset \s'$.

Assume now that $f$ is increasing for $x=o$, $v=0$, and for given $l_0\in \s, w_0\in \s'$, put $y=\exp_x(l_0)\in C$. Assume first that $\|l_0\|_{\p},\|w_0\|_{\p}<R$. Put $w=Ad_{k^{-1}}w_0$. Then, in the notation of the first part of the proof, $w'=w_0$ and
$$
f(t)=\|l_t\|_{\p}=\|l_0+t\dot{l}_0+o(t^2)\|_{\p}\le \|l_0+t\dot{l}_0\|_{\p}+o(t^2),
$$
and if $f$ is increasing
$$
0\le f'(0^+)\le \lim\limits_{t\to 0^+}\frac{\|l_0+t\dot{l}_0\|_{\p}-\|l_0\|_{\p}}{t}\le \|l_0+\dot{l}_0\|_{\p}-\|l_0\|_{\p}
$$
by the convexity of the norm. By the computation above, $\dot{l}_0=F^{-1}(\ad l_0)w_0$. Then
$$
\|l_0\|_{\p}\le \|l_0+F^{-1}(\ad l_0)w_0\|_{\p}=\|F^{-1}(\ad l_0)(l_0+w_0)\|_{\p}\le \|l_0+w_0\|_{\p},
$$
since $F^{-1}$ is a contraction. If now $l\in \s$ and $w\in \s'$, replacing them with a convenient positive multiple we obtain that $\|l\|_{\p}\le \|l+w\|_{\p}$, and this shows that $\|p\|=1$.
\end{proof}

\begin{lem}
The sets 
$$
\s_{R}\oplus \s'_R=\{v\in \p: v=s+s', s\in\s,s'\in \s', \|s\|_{\p},\|s'\|_{\p}<R\}
$$
are open neighborhoods of $0\in \p$.
\end{lem}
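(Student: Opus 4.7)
The statement is essentially a topological openness claim about a Minkowski-type sum of two balls sitting in complementary closed subspaces, so my plan is to rewrite the set as the intersection of two preimages under continuous projections, which makes openness immediate.

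The expansive reductive hypothesis gives us an idempotent $p\in {\cal B}(\p)$ with $\|p\|=1$, $\s=\mathrm{Ran}(p)$ and $\s'=\mathrm{Ker}(p)=\mathrm{Ran}(1-p)$. In particular $p$ and $1-p$ are bounded linear operators on $\p$. Since $\p=\s\oplus\s'$ as a topological direct sum, every $v\in\p$ has the unique decomposition $v=p(v)+(1-p)(v)$ with $p(v)\in\s$ and $(1-p)(v)\in\s'$. Therefore the condition $v=s+s'$ with $s\in\s,\,s'\in\s'$ forces $s=p(v)$ and $s'=(1-p)(v)$, so
$$
\s_R\oplus \s'_R=\{v\in\p:\|p(v)\|_{\p}<R\}\cap\{v\in\p:\|(1-p)(v)\|_{\p}<R\}.
$$
The first key step is to observe that each of the two sets on the right is the preimage of the open ball of radius $R$ in $\p$ under the continuous linear map $p$ (respectively $1-p$), hence open in $\p$; their intersection is therefore open.

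The second step is to check $0\in \s_R\oplus \s'_R$, which is trivial by taking $s=s'=0$ (both have norm $0<R$). Combining the two steps yields the desired conclusion that each $\s_R\oplus\s'_R$ is an open neighborhood of the origin.

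There is essentially no technical obstacle here; the only subtlety worth flagging is that one must use the uniqueness of the decomposition $\p=\s\oplus\s'$ (so that the quantifier ``there exist $s,s'$ with $\|s\|,\|s'\|<R$'' collapses to an explicit condition on $p(v)$ and $(1-p)(v)$), together with the boundedness of $1-p$, which follows from the closed graph theorem applied to the projection onto a closed complemented subspace, or simply from the fact that $p\in{\cal B}(\p)$ already. The expansive hypothesis $\|p\|=1$ is not needed for this particular lemma; mere boundedness of the projection suffices.
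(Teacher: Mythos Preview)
Your proof is correct and cleaner than the paper's. The paper proceeds by a direct $\varepsilon$-ball argument: given $s+s'\in \s_R\oplus\s'_R$ with $\|s\|_{\p}=R-\delta$ and $\|s'\|_{\p}=R-\delta'$, it sets $\varepsilon=\min\{\delta/\|p\|,\,\delta'/(1+\|p\|)\}$ and verifies by hand that $B(s+s',\varepsilon)\subset \s_R\oplus\s'_R$, estimating $\|t\|_{\p}$ and $\|t'\|_{\p}$ separately via the triangle inequality and the bound $\|t-s\|_{\p}\le \|p\|\,\|t+t'-s-s'\|_{\p}$. Your rewriting of the set as $p^{-1}(B_R)\cap (1-p)^{-1}(B_R)$ bypasses all of this bookkeeping by appealing directly to the continuity of the two projections; the paper's computation is in effect just a manual unpacking of that continuity. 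Your observation that only boundedness of $p$ (not $\|p\|=1$) is needed is also correct, and indeed the paper's own estimates carry $\|p\|$ as a generic constant throughout.
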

\begin{proof}
Let $s+s'\in \s_{R}\oplus \s'_R$ with $\|s\|_{\p}=R-\delta$, $\|s'\|_{\p}=R-\delta'$, and let $\varepsilon=\min \{\delta/\|p\|,\frac{\delta'}{1+\|p\|}\}$. We claim that $B(s+s',\varepsilon)\subset \s_{R}\oplus \s'_R$. Let $t+t'\in B(s+s',\varepsilon)$, then
$$
\|t\|_{\p}\le \|t-s\|_{\p}+\|s\|_{\p}\le \|p\|\|t-s+t'-s'\|_{\p}+R-\delta<\|p\|\varepsilon +R-\delta<R,
$$
and on the other hand
\begin{eqnarray}
\|t'\|_{\p}&\le & \|t'-s'\|_{\p}+R-\delta'\le \|t'-s'+t-s\|_{\p}+\|t-s\|_{\p}+R-\delta' \nonumber\\
&<& \varepsilon+\|p\|\varepsilon+R-\delta'<R.\nonumber
\end{eqnarray}
\end{proof}

\begin{lem}\label{zerosection}
Let $x_0=q(e^{s_0})\in C$,  $R>0$ and 
$$
\Omega_{x_0}^{R}=\{\exp_y(v), \, y\in C,\, d(x_0,y)<R, \,v\in \s', \|v\|_{\p}<R\}.
$$
Let $E_{x_0}:\p\to M$ be given by $E_{x_0}(s+s')=q(e^{s_0}e^se^{s'})=\exp_y((\mu_g)_{*o}s')$, where $y=q(e^{s_0}e^s)\in C$ and $g=e^{s_0}e^s$. Then there exists $\varepsilon>0$  (and strictly smaller than $\kappa_M/8$) such that $E_{x_0}:\s_{\varepsilon}\oplus \s'_{\varepsilon}\to \Omega_{x_0}^{\varepsilon}$ is a diffeomorphism, and in particular $\Omega_{x_0}^{\varepsilon}\subset M$ is open. The set
$$
NC^{\varepsilon}=\{\exp_y(v):y\in C,\, v\in\s',\,\|v\|_{\p}<\varepsilon\}
$$
is an open neighborhood of $C$ in $M$.
\end{lem}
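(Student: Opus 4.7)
The plan is to apply the inverse function theorem at $0\in\p$ and identify the resulting local image with $\Omega_{x_0}^\varepsilon$ using the local convexity of $C$ from Proposition \ref{cuasi}. Differentiating $E_{x_0}(s+s')=q(e^{s_0}e^se^{s'})$ at the origin in the direction $v+w\in\s\oplus\s'=\p$ gives
$$
(E_{x_0})_{*0}(v+w)=\frac{d}{dt}|_{t=0}\, q(e^{s_0}e^{tv}e^{tw})=(\mu_{e^{s_0}})_{*o}\,q_{*1}(v+w)=(\mu_{e^{s_0}})_{*o}(v+w),
$$
a Banach space isomorphism $\p\to T_{x_0}M$. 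The inverse function theorem produces an open neighborhood $V\ni 0$ in $\p$ on which $E_{x_0}$ is a diffeomorphism onto an open neighborhood of $x_0$; since $\s_R\oplus\s'_R$ is open by the preceding lemma, I fix $\varepsilon>0$ with $\varepsilon<\kappa_M/8$, smaller than the constant of Proposition \ref{cuasi}(3), and with $\s_\varepsilon\oplus\s'_\varepsilon\subset V$. The restriction $E_{x_0}|_{\s_\varepsilon\oplus\s'_\varepsilon}$ is then automatically a diffeomorphism onto an open subset of $M$.

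The key step is the identification $E_{x_0}(\s_\varepsilon\oplus\s'_\varepsilon)=\Omega_{x_0}^\varepsilon$. The inclusion $\subset$ is direct: for $s+s'$ in the domain, the point $y=q(e^{s_0}e^s)\in C$ satisfies $d(x_0,y)\le \|s\|_\p<\varepsilon$ via the geodesic $t\mapsto q(e^{s_0}e^{ts})\subset C$, and $\|s'\|_\p<\varepsilon$, so $E_{x_0}(s+s')=\exp_y(s')\in\Omega_{x_0}^\varepsilon$. For the reverse inclusion, given $z=\exp_y(v)\in\Omega_{x_0}^\varepsilon$, Proposition \ref{cuasi}(3) applies to $x_0,y\in C$ with $d(x_0,y)<\varepsilon$ and yields a unique $s\in\s$ with $y=q(e^{s_0}e^s)$ and $\|s\|_\p=d(x_0,y)<\varepsilon$; under the canonical identification of $T_yM$ with $\p$ via $(\mu_{e^{s_0}e^s})_{*o}$, the vector $v$ corresponds to a unique $s'\in\s'$ of norm less than $\varepsilon$, whence $z=E_{x_0}(s+s')$. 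In particular $\Omega_{x_0}^\varepsilon$ is open in $M$.

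For the final assertion I observe that $\varepsilon$ is uniform in $x_0\in C$: the tautology $E_{x_0}=\mu_{e^{s_0}}\circ E_o$ (with $E_o$ the map for basepoint $o$) together with the fact that $\mu_{e^{s_0}}$ is an isometric diffeomorphism of $M$ sending $o$ to $x_0$ and $C$ to $C$ (Proposition \ref{grupejo}) give $\Omega_{x_0}^\varepsilon=\mu_{e^{s_0}}(\Omega_o^\varepsilon)$, so the $\varepsilon$ that works at $o$ works at every $x_0$. Since $d(y,y)=0<\varepsilon$ for every $y\in C$, one has $NC^\varepsilon=\bigcup_{x_0\in C}\Omega_{x_0}^\varepsilon$, a union of open sets, hence open; and $C\subset NC^\varepsilon$ by taking $v=0$. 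The main obstacle is the $\supset$ inclusion in the identification, where one must invoke Proposition \ref{cuasi} to lift $y\in C$ near $x_0$ canonically to $\s$ with controlled norm and use the reductive structure of $C$ to match $v$ with a well-defined element of $\s'$.
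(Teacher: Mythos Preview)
Your proof is correct and follows the same approach as the paper: compute $(E_{x_0})_{*0}$ to be the identity (up to the identification $\p\simeq T_{x_0}M$), invoke the inverse function theorem, shrink to a box $\s_\varepsilon\oplus\s'_\varepsilon$, and conclude that $NC^\varepsilon=\bigcup_{x_0\in C}\Omega_{x_0}^\varepsilon$ is open. The paper's argument is considerably terser: it simply asserts $\Omega_{x_0}^\varepsilon=E_{x_0}(\s_\varepsilon\oplus\s'_\varepsilon)$ after shrinking, without spelling out either inclusion, and does not comment on the uniformity of $\varepsilon$ over $C$. Your appeal to Proposition~\ref{cuasi} for the reverse inclusion and your homogeneity argument $E_{x_0}=\mu_{e^{s_0}}\circ E_o$ for uniformity are the right justifications for these points and make the proof complete where the paper leaves them implicit.
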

\begin{proof}
Let $\alpha(t)=t(s+s')$ with $s+s'\in\p$. Then $E_{x_0}\circ \alpha(t)=q(e^{s_0}e^{ts}e^{ts'})$, hence $(E_{x_0})_{*0}(s+s')=s+s'$, so by the inverse function theorem there exists an open neighborhood $U$ of $0\in \p$ and an open neighborhood $V$ of $x_0\in M$ such that $E_{x_0}$ restricted to them is a diffeomorphism. Shrinking, we can assume that $U=\s_{\varepsilon}\oplus \s'_{\varepsilon}$ and then $\Omega_{x_0}^{\varepsilon}=E_{x_0}(U)$. The last statement is due to the fact that $NC^{\varepsilon}=\cup_{x_0\in C}\Omega_{x_0}^{\varepsilon}$.
\end{proof}

\begin{rem}\label{proye}
Assume that $C$ is locally convex, reductive and expansive. Let $\exp_y(v)=\exp_{y'}(v')\in \Omega_{x_0}^{\varepsilon}$, with $y,y'\in B(x_0,R)$ and $v,v'\in \s'_{\varepsilon}$. If  $\varepsilon<\kappa_M/8$, then by Lemma \ref{laproyeslipsitz},  $y=y'$. Moreover $v-v'\in \Gamma$ but since $\|v-v'\|_{\p}\le 2\varepsilon$, then $v=v'$.

Let $\pi_{x_0}:\Omega_{x_0}^{\varepsilon}\to C\cap B(x_0,\varepsilon)$ be the local projection to $C$, $\pi_{x_0}(\exp_y(v))=y$. Then by Lemma \ref{zerosection},  if $\alpha\subset \Omega_{x_0}^{\varepsilon}$ is the short geodesic starting at $z=q(e^{s_0}e^se^{s'})$ with initial speed $w\in\p$, $\|w\|_p=L(\alpha)$, then $\alpha(t)=q(e^{s_0}e^{s_t}e^{v_t})$ for some smooth curves $s_t\in \s$, $v_t\in \s'$ with $s_0=s$ and $v_0=s'$. Hence 
$\pi_{x_0}\circ \alpha(t)=q(e^{s_t})$, and 
$$
(\pi_{x_0})_{*z}w=F(\ad s)\dot{s}_0
$$
follows. Since $\pi_{x_0}$ is a contraction by Lemma \ref{laproyeslipsitz},
$$
d(q(e^{s_0}e^s),q(e^{s_0}e^{s_t}))=d(\pi_{x_0}(z),\pi_{x_0}(\alpha(t))\le d(z,\alpha(t))=L_0^t(\alpha)=t\|w\|_{\p}.
$$
If $\gamma$ is a smooth curve in $\s$ such that $\gamma(0)=0$, $q(e^{\gamma})=q(e^{-s}e^{s_t})$, and $\|\gamma\|_{\p}=d(q(e^{-s}),q(e^{s_t}))$, from $t\|w\|_{\p}\ge \|\gamma(t)\|_{\p}$ follows $\|F(\ad s)\dot{s}_0\|_{\p}\le \|w\|_{\p}$, or equivalently,  $\|(\pi_{x_0})_{*z}\|\le 1$.
\end{rem}

\begin{teo}
Let $x_0\in C$, let $\varepsilon$ be as in Lemma \ref{zerosection} and put
$$
\Omega_{x_0}=\{\exp_y(v): y\in C,\, d(y,x_0)<\varepsilon,\, v\in \s'\}.
$$
Let $k\in\mathbb N_0$, let $\eta_{k}:\Omega_{x_0}^{\varepsilon}\to \Omega_{x_0}$ be $\eta_k(\exp_y(v))=\exp_y(2^k v)$. Then the differential of $\eta_k$ is an expansive invertible operator. In particular, $\eta_k$ is a local isomorphism.
\end{teo}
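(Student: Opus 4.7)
The plan is to reduce to the case $k=1$ by iterated composition and the chain rule (since $\eta_{k+1}=\eta_1\circ\eta_k$ maps $\Omega_{x_0}$ into itself and preserves both expansiveness and invertibility), and then to run a Jacobi-field convexity argument. Fix $z=\exp_y(v)\in\Omega_{x_0}^{\varepsilon}$ and $\xi\in T_zM$. Using the chart $\Psi(s,s')=q(e^s e^{s'})$ from Lemma \ref{zerosection}, choose a variation $z_\sigma=\exp_{y_\sigma}(v_\sigma)$ of $z$ with $\dot z_0=\xi$, and set
$$
J(t)=\partial_\sigma|_{\sigma=0}\exp_{y_\sigma}(tv_\sigma),\qquad t\in[0,2^k],
$$
along the normal geodesic $\beta(t)=\exp_y(tv)$. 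By construction $J(1)=\xi$ and $J(2^k)=(\eta_k)_{*z}(\xi)$. Let $f(t)=\|J(t)\|_{\beta(t)}$.

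Two facts about $f$ give the expansive inequality. First, $f$ is convex on $[0,2^k]$: the family $\beta_\sigma(t)=\exp_{y_\sigma}(tv_\sigma)$ is a smooth one-parameter family of geodesics, and Proposition \ref{loconvex}(3) ensures that each difference quotient $\sigma^{-1}d(\beta_0(t),\beta_\sigma(t))$ is a convex function of $t$; passing to $\sigma\to 0$ transfers convexity to the limit $f(t)=\lim_\sigma \sigma^{-1}d(\beta_0(t),\beta_\sigma(t))$. Second, $f(0)\le f(1)$: since $J(0)=\partial_\sigma|_0 y_\sigma=(\pi_{x_0})_{*z}(\xi)$ and the expansive hypothesis $\|p\|=1$ forces $\pi_{x_0}$ to be a contraction by Lemma \ref{laproyeslipsitz}, one has $\|J(0)\|\le\|\xi\|$. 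Combining them, for $t\ge 1$ the convexity inequality $f(1)\le\frac{t-1}{t}f(0)+\frac{1}{t}f(t)$ rearranges to $f(t)\ge tf(1)-(t-1)f(0)\ge f(1)$; setting $t=2^k$ yields $\|(\eta_k)_{*z}\xi\|_{\eta_k(z)}\ge\|\xi\|_z$.

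For invertibility I would exhibit a local smooth inverse of $\eta_k$ at $z$. On a neighborhood of $\eta_k(z)$ in $\Omega_{x_0}$, the local convexity of $C$ (Proposition \ref{cuasi}) together with the strict monotonicity of the distance to $C$ along normal geodesics (Lemma \ref{laproyeslipsitz}, used here through the expansive reductive hypothesis) make the normal decomposition $\exp_{y'}(w)\mapsto(y',w)\in C\times\s'$ both unique and smooth. Hence $\eta_{-k}(\exp_{y'}(w)):=\exp_{y'}(2^{-k}w)$ is a well-defined smooth two-sided inverse of $\eta_k$ on this neighborhood, so $(\eta_k)_{*z}$ is a linear isomorphism with inverse $(\eta_{-k})_{*\eta_k(z)}$, and the inverse function theorem yields the final sentence of the theorem.

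The main obstacle I foresee is the rigorous transfer of convexity to the limit field $f(t)=\|J(t)\|$: Proposition \ref{loconvex}(3) is a local statement requiring endpoints inside balls of radius $<\kappa_M/4$, so one must verify that the variation parameter $\sigma$ can be taken uniformly small over the whole interval $[0,2^k]$ so the hypothesis applies for every $t$, and that the convexity of the difference quotients survives the passage to the limit. Once this analytic point is settled, the remaining steps are routine.
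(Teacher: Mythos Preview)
Your expansiveness argument via Jacobi fields is a genuinely different and cleaner route than the paper's: the paper computes $(\eta_k)_{*z}$ explicitly in the chart $E_{x_0}$, obtains the recursion
\[
(\eta_k)_{*z}w=2\cosh(2^{k-1}\ad v)(\eta_{k-1})_{*z}w-(\pi_{x_0})_{*z}w,
\]
and then argues by induction with a dissipativity trick, whereas you extract $\|(\eta_k)_{*z}\xi\|\ge\|\xi\|$ directly from convexity of $t\mapsto\|J(t)\|$ together with $f(0)\le f(1)$. The obstacle you flag is real but harmless: once $\sigma$ is small enough that $d(\beta_0(t),\beta_\sigma(t))<\kappa_M/4$ uniformly on $[0,2^k]$, Proposition~\ref{loconvex}(3) gives convexity on each short subinterval, and local convexity of a continuous function implies global convexity; the pointwise limit of convex functions is convex.

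The invertibility argument, however, is circular. You want to build $\eta_{-k}$ from the normal decomposition $\exp_{y'}(w)\mapsto(y',w)$ on a neighborhood of $\eta_k(z)$, but at this point in the paper we do not know that any $M$-neighborhood of $\eta_k(z)$ lies in $\Omega_{x_0}$ (that is the \emph{next} theorem, which uses the present one), nor that the decomposition is unique or smooth there: Lemma~\ref{laproyeslipsitz} and Remark~\ref{proye} require $\|w\|<\varepsilon<\kappa_M/8$, while $\|2^kv\|$ may be large. Expansiveness alone yields injectivity of $(\eta_k)_{*z}$ with closed range, but in a Banach space this does not force surjectivity. The paper closes this gap through the recursion above: rewriting it as
\[
(\eta_k)_{*z}=\cosh(2^{k-1}\ad v)\,(\eta_{k-1})_{*z}\bigl[2-(\eta_{k-1})_{*z}^{-1}\cosh^{-1}(2^{k-1}\ad v)(\pi_{x_0})_{*z}\bigr],
\]
the bracketed factor is $1-(A_k-1)$ with $\|A_k\|\le 1$, hence $A_k-1$ is dissipative and $2-A_k$ is expansive \emph{and invertible}; the other factors are invertible by Remark~\ref{cosh} and induction. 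Your Jacobi-field approach does not seem to supply a substitute for this two-sided operator estimate without importing some comparable algebraic input.
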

\begin{proof}
We assume as always that $x_0=o$. Let $z=\exp_y(v)=q(e^{s}e^v)\in \Omega_{x_0}^{\varepsilon}$, and let $\alpha(t)=q(e^se^{v}e^{tw})$ for $s+v\in \s_{\varepsilon}\oplus \s'_{\varepsilon}$. Then for $t$ small enough, $\alpha(t)\in \Omega_{x_0}^{\varepsilon}$, so we consider $\beta=\eta\circ \alpha$ to compute $\eta_{*z}w=\dot{\beta}(0)$.
Let $s_t+v_t\in \s_{\varepsilon}\oplus \s'_{\varepsilon}$ be such that $\alpha(t)=q(e^{s_t}e^{v_t})$, with $s_0=s$ and $v_0=v$. Then a straightforward but tedious computation yields
$$
w= \frac{\sinh\ad v}{\ad v}\dot{v}_0 +\cosh(\ad v)\frac{\sinh\ad s}{\ad s}\dot{s}_0-\sinh(\ad v)\left(\frac{1-\cosh\ad s}{\ad s}\right)\dot{s}_0.
$$
Replacing $v_t$ with $2^k v_t$, yields
$$
(\eta_k)_{*z}w= \frac{\sinh 2^k\ad v}{\ad v}\dot{v}_0 +\cosh(2^k\ad v)\frac{\sinh\ad s}{\ad s}\dot{s}_0-\sinh(2^k\ad v)\left(\frac{1-\cosh\ad s}{\ad s}\right)\dot{s}_0.
$$
Using the identities $\sinh(2z)=2\sinh(z)\cosh(z)$, $\sinh^2(z)+1=\cosh^2(z)$ and $\cosh(2z)=\cosh^2(z)+\sinh^2(z)$, we obtain
$$
(\eta_k)_{*z}w=2\cosh(2^{k-1}\ad v)(\eta_{k-1})_{*z}w-F(\ad s)\dot{s}_0.
$$
From the previous remark, the last term matches with $(\pi_{x_0})_{*z}w$. Now by Remark \ref{cosh}, $\cosh(2^{k-1}\ad v)$ is an expansive invertible operator of $\p$, hence
$$
(\eta_k)_{*z}w=\cosh(2^{k-1}\ad v)\left[2(\eta_{k-1})_{*z}-\cosh^{-1}(2^{k-1}\ad v)(\pi_{x_0})_{*z}\right]w.
$$
The proof is on induction on $k$. If $k=0$, there is nothing to prove since $\eta_0=id$. Assume then that $(\eta_{k-1})_{*z}$ is expansive and invertible for any $z\in \Omega_{x_0}^{\varepsilon}$. Then
$$
(\eta_k)_{*z}w=\cosh(2^{k-1}\ad v)(\eta_{k-1})_{*z}\left[2-(\eta_{k-1})_{*z}^{-1}\cosh^{-1}(2^{k-1}\ad v)(\pi_{x_0})_{*z}\right]w.
$$
If $u\in\p$ and $\varphi\in \p^*$ is any unit norming functional for $u$, then if we put $A_k=(\eta_{k-1})_{*z}^{-1}\cosh^{-1}(2^{k-1}\ad v)(\pi_{x_0})_{*z}$,
\begin{eqnarray}
\varphi\left(A_k u-u\right) &\le & \|A_k u\|_{\p}-\varphi(u)\nonumber\\
&\le &\|u\|_{\p}-\|u\|_{\p}=0,\nonumber
\end{eqnarray}
which shows that $A_k-1$ is a dissipative operator on $\p$, and by Remark \ref{disi}, the operator
$$
1-(A_k-1)=2-(\eta_{k-1})_{*z}^{-1}\cosh^{-1}(2^{k-1}\ad v)(\pi_{x_0})_{*z}
$$
is expansive and invertible in $\p$. Then $(\eta_k)_{*z}$ is also expansive and invertible. In particular $\eta_k$ is a local isomorphism by the inverse function theorem.
\end{proof}

\begin{teo}
Let $C=q(e^{\s})$ be a locally convex expansive reductive submanifold in $M$. Then $\Omega_{x_0}$ is an open neighborhood of $\exp_{x_0}(\s')$ in $M$ and
$$
NC=\{\exp_x(v):x\in C,\, v\in \s'\}
$$
is an open neighborhood of $C$ in $M$. The inequality $d(\eta_k x,\eta_k y)\ge d(x,y)$ holds for $x,y\in \Omega_{x_0}$ sufficiently close.
\end{teo}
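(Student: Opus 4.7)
My plan is twofold. First I will establish the openness statements by realizing $\Omega_{x_0}$ as a union of images of $\Omega_{x_0}^{\varepsilon}$ under the scaling maps $\eta_k$; second, I will derive the metric distortion inequality by pulling back short geodesics through the local inverse of $\eta_k$ and invoking the expansiveness of $(\eta_k)_*$ established in the previous theorem.

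For the openness, note that given any $z=\exp_y(v)\in \Omega_{x_0}$ one can pick $k\in\mathbb N_0$ large enough so that $\|2^{-k}v\|_{\p}<\varepsilon$; then $\exp_y(2^{-k}v)\in \Omega_{x_0}^{\varepsilon}$ and $\eta_k(\exp_y(2^{-k}v))=\exp_y(v)=z$. This yields
$$
\Omega_{x_0}=\bigcup_{k\ge 0}\eta_k(\Omega_{x_0}^{\varepsilon}).
$$
Since $\Omega_{x_0}^{\varepsilon}$ is open by Lemma \ref{zerosection} and each $\eta_k$ is a local diffeomorphism by the previous theorem (hence an open map), each set in the above union is open, and so $\Omega_{x_0}$ is open in $M$. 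The inclusion $\exp_{x_0}(\s')\subset \Omega_{x_0}$ is immediate (take $y=x_0$). The assertion for $NC$ then follows from $NC=\bigcup_{x_0\in C}\Omega_{x_0}$, which is a union of opens and contains $C$ via $v=0$.

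For the distance inequality, I would choose $x,y\in \Omega_{x_0}$ close enough that they lie in a common open set $U$ on which $\eta_k$ restricts to a diffeomorphism $\eta_k\vrule_{U}:U\to V:=\eta_k(U)$, and such that the (locally unique, by Remark \ref{capa}) short geodesic $\gamma$ joining $\eta_k x$ to $\eta_k y$ in $M$ is contained in $V$. Setting $\tilde\gamma:=(\eta_k\vrule_{U})^{-1}\circ\gamma\subset U$ produces a piecewise smooth curve joining $x$ to $y$ in $M$, and the expansiveness of $(\eta_k)_{*z}$ at every $z\in U$ gives
$$
\|\dot\gamma(t)\|_{\gamma(t)}=\bigl\|(\eta_k)_{*\tilde\gamma(t)}\dot{\tilde\gamma}(t)\bigr\|_{\gamma(t)}\ge \|\dot{\tilde\gamma}(t)\|_{\tilde\gamma(t)}.
$$
Integrating and using $\tilde\gamma\in\Omega_{x,y}$ yields $d(\eta_k x,\eta_k y)=L(\gamma)\ge L(\tilde\gamma)\ge d(x,y)$.

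I expect the main technical hurdle to be the bookkeeping needed to guarantee that $\gamma$ actually fits inside a single chart of $(\eta_k\vrule_{U})^{-1}$. This should follow from the local injectivity of the exponential map on balls of radius smaller than $\kappa_M/2$ (Remark \ref{capa}) together with the continuous dependence of the short geodesic on its endpoints; the inequality then holds precisely for $x,y$ chosen sufficiently close, matching the hypothesis in the statement.
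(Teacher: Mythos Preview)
Your proof is correct and follows essentially the same route as the paper: write $\Omega_{x_0}=\bigcup_{k\ge 0}\eta_k(\Omega_{x_0}^{\varepsilon})$, use that each $\eta_k$ is a local diffeomorphism (hence an open map) to get openness, take the union over $x_0\in C$ for $NC$, and for the metric inequality pull back a short geodesic from $\eta_k x$ to $\eta_k y$ through a local inverse of $\eta_k$ and compare lengths via the expansiveness of $(\eta_k)_*$. The paper's argument is the same, only slightly terser about why $\eta_k(\Omega_{x_0}^{\varepsilon})$ is open and about the local invertibility needed to form $\eta_k^{-1}\circ\alpha$.
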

\begin{proof}
Since  $\Omega_{x_0}=\bigcup\limits_{k\in\mathbb N_0} \eta_k\Omega_{x_0}^{\varepsilon}$, then $\Omega_{x_0}$ is an open set in $M$. Clearly $NC$ contains $C$, and on the other hand $NC$ is the union of open sets $NC=\bigcup\limits_{x_0\in C} \Omega_{x_0}$.

If $\alpha$ is a short geodesic joining $\eta_k x$ to $\eta_k y$, and $x,y$ are close enough, then $\alpha\subset\Omega_{x_0}$ and $\beta=\eta_k^{-1}\circ\alpha$ is a smooth curve in $\Omega_{x_0}^{\varepsilon}$ (for some $\varepsilon>0$) joining $x$ to $y$. Then
$$
d(x,y)\le L(\beta)\le L(\alpha)=d(\eta_k x,\eta_k y).
$$
\end{proof}

\begin{teo}\label{spliteo} (CPR splittings for Cartan-Hadamard manifolds) Let $C=q(e^{\s})$ be a reductive expansive submanifold in $M$, and assume that $M$ is simply connected. Then if $v,w\in \s'$ and $x,y\in C$, the distance function  $f:[0,+\infty)\to [0,+\infty)$
$$
f(t)=d(\exp_x(tv),\exp_y(tw))
$$
is increasing. For each $k\in \mathbb N_0$, the map $\eta_k: NC^{\varepsilon}\to NC$ given by $\eta_k\exp_x(v)=\exp_x(2^k v)$ is injective, and it is an isomorphism onto its image, with expansive differential. Moreover $NC=M$, namely
$$
M=\{\exp_x(v):x\in C,\,v\in \s'\},
$$
so for any $v\in  \p$ there exists a unique $\s\in \s$ and a unique $s'\in \s'$ such that $q(e^v)=q(e^se^{s'})$. The projection map $\pi:M\to C$ is contractive for the geodesic distance.
\end{teo}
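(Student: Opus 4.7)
The plan is to establish the four assertions in sequence: monotonicity of $f$; injectivity of $\eta_k$ on $NC^{\varepsilon}$ (which, combined with the expansive differential from the preceding theorem, promotes it to a diffeomorphism onto its image); the splitting $NC=M$ with uniqueness; and contractivity of $\pi$. Simple connectedness of $M$ is used repeatedly: $\Gamma=0$ forces $\kappa_M=+\infty$ and each $\exp_x$ is a diffeomorphism (Theorem \ref{CH}), the locally convex submanifold $C$ is globally geodesically convex (since $\Gamma\subset\s$ is vacuous), and by Corollary \ref{cerrada} $C$ is a closed metric subspace of $M$.

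The monotonicity of $f$ is the crux. Since $f$ is convex by Theorem \ref{convex}, it suffices to show $f'(0^+)\ge 0$, which I would do by reproducing the argument of Lemma \ref{laproyeslipsitz} globally. Writing $x=q(e^r)$, $y=q(e^s)$ with $r,s\in\s$, global convexity of $C$ produces $l_0\in\s$ and $k\in K$ with $e^{l_0}k=e^{-r}e^s$ and $\|l_0\|_{\p}=d(x,y)$, and reductivity gives $w':=Ad_kw\in\s'$. The lift $l_t\in\p$ of $\beta(t)=q(e^{-tv}e^{l_0}e^{tw'})$ under the global diffeomorphism $\exp_o$ satisfies $\|l_t\|_{\p}=f(t)$ for every $t\ge 0$, and the identity $\dot l_0=F^{-1}(\ad l_0)w'-H(\ad l_0)v$ is unchanged. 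Setting $\varphi=\varphi_0\circ p$ with $\varphi_0\in\p^*$ norming $l_0$, the function $g(t)=\varphi(l_t)$ satisfies $g(0)=f(0)$ and $g'(0)=0$ (by dissipativity of $-\ad^2_{l_0}$, the inclusion $\ad^2_{l_0}(\s')\subset\s'$, and $\varphi\vrule_{\s'}=0$), while $\|p\|=1$ yields $f\ge g$, whence $f'(0^+)\ge g'(0)=0$. Injectivity of $\eta_k$ is then immediate: if $\eta_k\exp_x(v)=\eta_k\exp_y(w)$, the non-decreasing non-negative function $\tilde f(t)=d(\exp_x(t2^kv),\exp_y(t2^kw))$ vanishes at $t=1$ and is hence identically zero on $[0,1]$, so $x=y$ and then $v=w$ by injectivity of $\exp_x$.

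For $NC=M$, my strategy is to show $NC$ is both open and closed in the connected manifold $M$. Openness is immediate from $NC=\bigcup_k NC^{2^k\varepsilon}$. For closedness, suppose $x_n=\exp_{y_n}(v_n)\to x$ in $M$ with $y_n\in C$ and $v_n\in\s'$: by the monotonicity of $f$ applied with $v=v_n$, $w=v_m$, we obtain $d(y_n,y_m)\le d(x_n,x_m)$, so $\{y_n\}$ is Cauchy in the complete metric subspace $C$ and converges to some $y\in C$; then $v_n=\exp_{y_n}^{-1}(x_n)\to\exp_y^{-1}(x)=:v$ by joint continuity, and $v\in\s'$ since $\s'$ is closed, giving $x=\exp_y(v)\in NC$. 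Uniqueness of the splitting $q(e^v)=q(e^se^{s'})$ follows from the same monotonicity argument applied to two putative decompositions, which forces first $q(e^{s_1})=q(e^{s_2})$ (whence $s_1=s_2$ by simple connectedness) and then $s_1'=s_2'$.

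Finally, for $\pi$: given arbitrary $x,y\in M$, the unique short geodesic $\gamma$ from $x$ to $y$ (Theorem \ref{CH}) lies in $NC=M$, so $\pi\circ\gamma$ is a smooth curve in $C$ from $\pi x$ to $\pi y$; by Remark \ref{proye}, $\pi_*$ is pointwise contractive, so $d(\pi x,\pi y)\le L(\pi\circ\gamma)\le L(\gamma)=d(x,y)$. The main obstacle I anticipate is the globalisation in the first step: extending the computation of Lemma \ref{laproyeslipsitz} to arbitrary $x,y\in C$ and $v,w\in\s'$ relies on the joint use of reductivity (for $Ad_kw\in\s'$), global convexity of $C$ (for the existence of $l_0\in\s$ with $\|l_0\|_{\p}=d(x,y)$), and simple connectedness of $M$ (for the global lift $l_t$ satisfying $\|l_t\|_{\p}=f(t)$); the other three steps then cascade from this.
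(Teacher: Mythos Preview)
Your proposal is correct and follows essentially the same route as the paper: you globalise Lemma~\ref{laproyeslipsitz} via $\kappa_M=+\infty$ to obtain the monotonicity of $f$, deduce injectivity of $\eta_k$, and then conclude $NC=M$ by an open--closed argument (the paper uses the contractivity of $\pi$ to get $y_n$ Cauchy whereas you invoke the monotonicity of $f$ directly, and the paper uses Lemma~\ref{emi} for the convergence of $v_n$ whereas you use joint continuity of $(y,x)\mapsto\exp_y^{-1}(x)$---both are equivalent and valid here). The mention of dissipativity in your computation of $g'(0)$ is superfluous (the inclusion $\ad_{l_0}^2(\s')\subset\s'$ and $\varphi\vrule_{\s'}=0$ suffice), but this does not affect correctness.
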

\begin{proof}
If $M$ is simply connected, $C$ is a closed, convex, embedded immersed submanifold of $M$ by Corollary \ref{cerrada}. In Lemma \ref{laproyeslipsitz}, we can take $R=+\infty$ since $\kappa_M=+\infty$. This proves the first assertion, and moreover, it shows that $\eta_k$ is injective. Then $NC=\cup_{k\in\mathbb N_0}\eta^k NC^{\varepsilon}\subset M$ is an open set in $M$, and moreover $\pi:NC\to M$ is contractive by Remark \ref{proye}, since $\pi(\exp_y(v))=\pi(\exp_y(\lambda v))$ for real $\lambda$, and then the argument in that remark applies. To finish, we claim that $NC$ is closed in $M$: for consider $x_n\in NC$ such that $x_n\to x\in M$. Then any $x_n$ can be uniquely written as $x_n=\exp_{y_n}(v_n)$, with $y_n\in C$ and $v_n\in \s'$. Since $\pi$ is a contraction, $y_n$ is a Cauchy sequence in $C$, and since $C$ is closed in $M$, there exists $y_0\in C$ such that $\lim y_n=y_0$. Then, by Lemma \ref{emi}, 
$$
\|v_n-v_0\|_{\p}\le d(q(e^{v_n}),q(e^{v_0}))= d(\exp_{y_n}(v_n),\exp_{y_n}(v_0))=d(x_n,\exp_{y_n}(v_0)).
$$
Letting $n\to \infty$ gives $v_n\to v_0\in \s'$, and then $x=\lim x_n=\lim \exp_{y_n}(v_n)=\exp_{y_0}(v_0)\in NC$.
\end{proof}

\begin{problem}
Extend the results of Theorem \ref{spliteo} to arbitrary Cartan-Hadamard manifolds (i.e. the general setting of Section \ref{uniconvex}).
\end{problem}

The relationship between this last result and Theorem \ref{teopconvex} of Section \ref{psplit} is presented below:

\begin{teo}\label{split}
Let $C=q(e^{\s})\subset M$ be an expansive reductive submanifold, let $z \in NC$, $z=\exp_x(v)$ for some $x\in C$, $v\in \s'$, and assume that $\|v\|_{\p}=d(x,z)\le \kappa_M/8$. Then $x$ is (locally) the best approximation to $z$ in $C$ (and then $d(z,C\cap B(z,\kappa_M/8))=\|v\|_{\p}$) if and only if $\|1-p\|=1$.
\end{teo}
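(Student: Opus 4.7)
The plan is to exploit the characterization stated in the remark preceding the theorem: $\|1-p\|=1$ is equivalent to $\s'\subset\s^{\perp}$ in the Birkhoff sense, i.e.\ $\|v'\|_\p\le\|v'-s\|_\p$ for every $v'\in\s'$ and every $s\in\s$. Each direction of the biconditional reduces to this Birkhoff condition combined with Lemma \ref{emi}.

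For sufficiency ($\|1-p\|=1\Rightarrow x$ is locally the best approximation), the fixed $v\in\s'$ is Birkhoff-orthogonal to $\s$ by the above characterization. Reducing to $x=o$ via the isometric action of $G_\s$ on $M$ (Proposition \ref{grupejo}), pick any $y\in C\cap B(z,\kappa_M/8)$. Then $d(o,y)\le d(o,z)+d(z,y)\le\kappa_M/4<\kappa_M/2$, so Proposition \ref{cuasi} and Corollary \ref{cerrada} give a unique $s\in\s$ with $y=q(e^s)$ and $\|s\|_\p=d(o,y)$. Lemma \ref{emi} and the Birkhoff inequality then yield
\[
d(y,z)=d(\exp_o(s),\exp_o(v))\ge\|v-s\|_\p\ge\|v\|_\p=d(x,z),
\]
so $x$ is locally the best approximation and $d(z,C\cap B(z,\kappa_M/8))=\|v\|_\p$.

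For necessity, I invoke the Birkhoff-orthogonality proposition of Section \ref{psplit}. Its proof --- a convexity argument with $g(t)=d(\beta(t),z)$ for smooth curves $\beta\subset C$ issuing from $x$, coupled with Lemma \ref{emi} and the convexity of the tangent norm --- transfers directly to the present local setting, since by Corollary \ref{cerrada} the map $\exp_x$ is a diffeomorphism onto a neighborhood of $x$ in $C$ up to radius $\kappa_M/2$. That proposition, with the roles of $x$ and $z$ appropriately exchanged, concludes that the initial speed $v$ of the geodesic from $x$ to $z$ is Birkhoff orthogonal to $T_xC\simeq\s$. Applying this conclusion to every admissible normal vector --- the hypothesis is equally valid for any $v'\in\s'$ with $\|v'\|_\p\le\kappa_M/8$, and homogeneity of the Birkhoff condition in $v'$ lets the norm bound be dropped --- gives $\|v'\|_\p\le\|v'-s\|_\p$ for all $v'\in\s'$, $s\in\s$, i.e.\ $\|1-p\|=1$.

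The main obstacle I anticipate is the final promotion step in the $(\Rightarrow)$ direction: pointwise Birkhoff orthogonality of a single $v\in\s'$ is strictly weaker than the operator-norm statement $\|1-p\|=1$. Closing this gap forces one to use the quantifier structure of the theorem's hypothesis, interpreting the single $v$ as a stand-in for an arbitrary admissible normal direction (a reading consistent with the universal nature of the conclusion and with $z$ being picked arbitrarily in $NC$). A secondary technical point is the careful verification that the Birkhoff-orthogonality proposition of Section \ref{psplit}, originally stated in the global Cartan-Hadamard setting, applies to the intrinsic local situation here; this is immediate once one restricts to the ball $B(z,\kappa_M/8)$ where $C$ is convex and $\exp_x$ is a diffeomorphism onto a neighborhood of $x$ in $C$.
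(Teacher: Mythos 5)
Your proposal is correct and follows essentially the same route as the paper: sufficiency via the equivalence of $\|1-p\|=1$ with Birkhoff orthogonality of $\s'$ to $\s$ together with the local metric-increasing inequality $\|v-s\|_{\p}\le d(q(e^v),q(e^s))$, and necessity via a first-variation argument at $t=0$ showing the normal direction is Birkhoff orthogonal to $\s$, followed by quantifying over all admissible $v\in\s'$ and rescaling. The quantifier gap you flag is real but is exactly what the paper's phrase ``replacing $v,s$ with convenient multiples'' silently assumes, so your resolution matches the intended reading.
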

\begin{proof}
Assume first that $\|1-p\|=1$. Since the action of $G_{\s}$ is transitive and isometric on $C$, we can assume that $x=o$, hence $z=q(e^v)$. Let $y=q(e^r)\in C$, with $r\in\s$ such that $\|r\|_{\p}=d(x,y)\le \kappa_M/8$. Then $d(z,y)\le \kappa_M/4$ and
$$
d(x,z)=\|v\|_{\p}=\|(1-p)(v-r)\|_{\p}\le \|v-r\|_{\p}\le d(q(e^v),q(e^r))=d(x,y),
$$
where the last inequality follows from Proposition \ref{loconvex}.  

On the other hand, if $d(x,o)\le d(x,y)$ for any $y\in C\cap B(x,\kappa_M/8)$, consider the function $f(t)=d(q(e^v),q(e^{ts}))$, with $f:[0,+\infty)\to (0,+\infty)$ and $s\in \s$ with $\|s\|_{\p}\le \kappa_M/8$.  Then the claim implies that $f$ has a local minimum at $t=0$. In particular, $f'(0^+)\ge 0$. As in the proof of Lemma \ref{laproyeslipsitz}, $f(t)=\|\gamma(t)\|_{\p}$, where $q(e^{\gamma})=q(e^{-v}e^{ts})$, with $\gamma(0)=-v$ and $\dot{\gamma}(0)=G(\ad v)s$. Hence
$$
0\le  f'(0^+)\le \|-v+G(\ad v)s\|_{\p}-\|v\|_{\p},
$$
and then $\|v\|_{\p}\le \|-v+s\|_{\p}$ for any $s\in \s$ small enough, so replacing $v,s$ with convenient multiples, we obtain $\|1-p\|=1$.
\end{proof}

\begin{rem}\label{clasica}
In the setting of finite dimensional (Riemannian) symmetric spaces $M=G/K$, a \textit{symmetric} submanifold $C\subset M$ is a submanifold such that there exists an involutive isometry $\varepsilon_0$ of $M$ such that $\varepsilon_0(K)=K$, $\varepsilon_0(C)=C$, and $(\varepsilon_0)_*(v)=(-1)^jv$, with $j=0$ if $v\in T_oC^{\perp}$ and $j=-1$ if $v\in T_oC$. In this context, it is easy to see that a submanifold is symmetric if the supplement $\s'$ of its tangent space $\s$ at $o=q(1)$ is a Lie triple system, $\ad_{\s'}(\s')\subset \s'$. A submanifold $C\subset M$ is called \textit{reflective} if it is both totally geodesic and symmetric. In the Riemannian setting, if $\s'=\s^{\perp}$, one also has the dual relations
$$
\ad_{\s'}^2(\s)\subset \s, \quad \ad_{\s}^2(\s')\subset \s'
$$ 
due to the fact that $\ad_v^2$ is self-adjoint for any $v\in \p$. Hence any reflective submanifold is reductive.

\medskip

In our infinite dimensional setting, it is natural to consider, given a Cartan-Hadamard manifold $M=G/K$, a second involutive automorphism $\tau$ of $G$ which commutes with $\sigma$. Let
$$
\u_+=\{v\in \g: \tau_{*1}v=v\},\quad \u_-=\{v\in \g:\tau_{*1}v=-v\}.
$$
Then if we put $\s=\p\cap \u_-$ and $\s'=\p\cap \u_+$, the conditions 
\begin{equation}\label{adre}
\ad_{\s}^2(\s)\subset \s, \quad \ad_{\s}^2(\s')\subset \s'
\end{equation}
are automatically fulfilled, so $C=q(e^{\s})$ is a reductive submanifold according to our definition \ref{reflectiva}.

If we define $\tau_0:M\to M$ as the involution given by $\tau_0(q(g))=q(\tau^{-1}(g))$, then if $M$ is simply connected, we can compute $\tau_0(q(e^v))=q(e^{-\tau_{*1} v})$ for any $v\in \p$, and $C$ is the set of $\tau_0$-fixed points. If $\tau_0$ is an isometry of $M$, since this is equivalent to the fact that $\tau_{*1}\vrule_{\,\p}$ is an isometry of $\p$, the reductive submanifold $C\subset M$ is expansive according to our definition \ref{reflectiva}, due to the fact that the projection $p$ onto $\s$ is given by $p=(1-\tau_{*1}\vrule_{\,\p})/2$. Moreover, since $1-p=(1+\tau_{*1}\vrule_{\,\p})/2$, the normal bundle gives the best approximation from $C$. Hence isometric involutions $\tau$ which commute with $\sigma$ induce reductive submanifolds for which Theorems \ref{spliteo} and \ref{split} apply, inducing a metric splitting as in Corollary \ref{spliteometrico} of Section \ref{psplit}.
\end{rem}

\begin{rem}
If $C=q(e^{\s})$ is a reflective submanifold, in the sense that 
$$
\ad^2_{\s}(\s)\subset\s,\quad  \ad^2_{\s}(\s')\subset\s',\quad  \ad^2_{\s'}(\s)\subset\s,\quad \ad^2_{\s'}(\s')\subset\s',
$$
then one obtains that $NC=\{\exp_x(v):\,x\in C,\;v\in \s'\}$ is open in $M$ with a more direct proof. One has to observe that if $v=s+s'\in\p$, and $w=t+t'$ (here $s,t\in\s$ and $t,t'\in \s'$ as usual), then the map $E:\p\to M$, of Lemma \ref{zerosection}, $E(v)=q(e^se^{s'})$, has its differential in form of a block matrix relative to $\s\oplus\s'$ given by
$$
E_{*v}w=\left(\begin{array}{cc} 
\cosh(\ad s')\frac{\sinh(\ad s)}{\ad s} & 0 \\
\\
\sinh(\ad s')\frac{(\cosh(\ad s)-1)}{\ad s} & \frac{\sinh(\ad s')}{\ad s'}
\end{array}\right)
\left(\begin{array}{c}t\\ 
\\
t' \end{array}\right).
$$
Then $E$ is a local isomorphism at any $v\in\p$, so $E(\p)$ is open in $M$.
\end{rem}

\subsubsection{CPR splittings for Banach-Lie groups}\label{groups}

Let $(G,\sigma)$ be an involutive Banach-Lie group. Let $\tau=\sigma_{*1}$, $\g=\p\oplus\k$ be the $\tau$-decomposition of $\g$. Assume that the Banach-Lie algebra $\g$ has a compatible norm $b$ that makes $-\ad^2_v\vrule_{\,\p}$  dissipative for each $v\in\p$. We say that $(G,\tau)$ satisfies SNC (semi-negative curvature). According to Proposition \ref{criterio}, this last condition is equivalent to the fact that $M=G/K$ is a Banach-Finsler manifold with spray of semi-negative curvature.

Combining Neeb's result on the polar map (Theorem \ref{fibras}) with Theorem \ref{spliteo}, we obtain polar decompositions relative to reductive submanifolds.

\begin{coro}\label{cprgruposplit}
Let $C=q(e^{\s})$ be an expansive reductive submanifold of a Cartan-Hadamard homogeneous space $M=G/K$. Then the map 
$$
(q(e^s),s',k)\mapsto e^se^{s'}k
$$
induces an isomorphism $C\times \s'\times K\simeq G$. 

\smallskip

Assume that $C$ is also reflective. If we put $C'=q(e^{\s'})$, then $C'$ is a reductive submanifold of $M$ and we obtain an isomorphism
$$
G \simeq  C\times C'\times K.
$$
If $g=e^se^{s'}k\in G$, then $\|s\|_{\p}=d(q(g),C')$. Moreover $\|s'\|_{\p}=d(q(g),C)$ if and only if $\|1-p\|=1$, i.e. if and only if $C'$ is also expansive.
\end{coro}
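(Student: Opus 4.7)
My plan is to establish the three parts of the corollary in sequence. For the factorization $C\times \s'\times K \simeq G$, I would combine the polar decomposition from Theorem \ref{fibras} with the normal-bundle bijection from Theorem \ref{spliteo}. Since $M$ is Cartan--Hadamard, hence simply connected, $\Gamma=0$ and the polar map $(v,k)\mapsto e^vk$ is a global diffeomorphism $\p\times K\simeq G$; and by Theorem \ref{spliteo} the map $(q(e^s),s')\mapsto q(e^se^{s'})$ is a global diffeomorphism ${\mathfrak N}_C = C\times\s'\simeq M$. Concretely, given $g\in G$, I polar-decompose $g=e^vk$ uniquely, then apply Theorem \ref{spliteo} to write $q(e^v)=q(e^se^{s'})$ uniquely; lifting back to $G$ gives $e^v = e^se^{s'}k_0$ for a unique $k_0\in K$, so $g=e^se^{s'}(k_0k)$. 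Uniqueness and smoothness of the assignment $g\mapsto(q(e^s),s',k_0k)$ follow from the two constituent diffeomorphisms.

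Under the additional reflective hypothesis, I verify that $C'=q(e^{\s'})$ is a reductive submanifold. The condition $\ad^2_{\s'}(\s')\subset\s'$ makes $\s'$ a Lie triple system, so $C'$ is an exponential set by Proposition \ref{triples}; the complementary idempotent $1-p$ splits $\s'$ inside $\p$; and the condition $\ad^2_{\s'}(\s)\subset\s$ supplies the reductive requirement for the transverse direction. Since $M$ is simply connected, $q\circ Exp:\p\to M$ is a global diffeomorphism by Theorem \ref{fibras}(1), and its restriction $\s'\to C'$, $s'\mapsto q(e^{s'})$, identifies $\s'$ with $C'$. Substituting this identification into the first isomorphism yields $G\simeq C\times C'\times K$.

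For the distance formulas, the equivalence $\|s'\|_{\p}=d(q(g),C)\Leftrightarrow \|1-p\|=1$ is a direct application of Theorem \ref{split} to the expansive reductive submanifold $C$, because $q(g)=\exp_{q(e^s)}(s')$ is exactly the normal-bundle decomposition of $q(g)$ from $C$. For $\|s\|_{\p}=d(q(g),C')$, my strategy is to apply a dual version of Theorem \ref{split} to the reductive submanifold $C'$: the complementary Birkhoff orthogonality condition that plays the role of $\|1-p\|=1$ becomes $\|1-(1-p)\|=\|p\|=1$, and this holds precisely because $C$ is expansive. This reduction requires first producing a dual normal-bundle decomposition $q(g)=\exp_y(u)$ with $y\in C'$ and $u\in\s$, and then identifying $\|u\|_{\p}$ with $\|s\|_{\p}$. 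The main obstacle will be producing the dual decomposition, since Theorem \ref{spliteo} is not directly available for $C'$ (it requires $C'$ to be expansive, which we do not assume); I expect one must rerun the construction of Section \ref{splitheo} with the roles of $\s$ and $\s'$ interchanged, using the reflective symmetry of $C$ together with the one-sided Birkhoff orthogonality provided by $\|p\|=1$ as the substitute for the expansiveness of $C'$. The norm identity $\|u\|_{\p}=\|s\|_{\p}$ should then follow from the interplay of these two factorizations with the orthogonality estimate used in the proof of Theorem \ref{split}.
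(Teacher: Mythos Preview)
Your approach to the factorizations is exactly what the paper intends: the corollary is stated there with no further argument beyond the sentence ``Combining Neeb's result on the polar map (Theorem~\ref{fibras}) with Theorem~\ref{spliteo}\ldots'', and your first two paragraphs carry this out correctly. Your use of Theorem~\ref{split} for the equivalence $\|s'\|_{\p}=d(q(g),C)\Leftrightarrow\|1-p\|=1$ is also the right move, with $\kappa_M=+\infty$ making the local statement global.

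There is, however, a genuine gap in your treatment of $\|s\|_{\p}=d(q(g),C')$. Your plan is to manufacture a dual normal-bundle decomposition $q(g)=\exp_y(u)$ with $y\in C'$, $u\in\s$, and then to \emph{identify} $\|u\|_{\p}$ with $\|s\|_{\p}$. Two problems arise. First, producing this dual decomposition genuinely needs $C'$ to be expansive (the key monotonicity lemma, Lemma~\ref{laproyeslipsitz}, rests on $\|1-p\|=1$ for $C'$), and ``rerunning Section~\ref{splitheo}'' does not bypass this. Second, and more seriously, even granting the decomposition you give no mechanism forcing $\|u\|_{\p}=\|s\|_{\p}$; the two factorizations $q(e^se^{s'})=q(e^{t'}e^{t})$ are unrelated at the level of norms of the individual pieces.

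The inequality $d(q(g),C')\ge\|s\|_{\p}$ has a one-line proof that you overlook: the projection $\pi:M\to C$ of Theorem~\ref{spliteo} is a metric contraction, it collapses all of $C'=\pi^{-1}(o)$ to the single point $o$, and sends $q(g)$ to $q(e^s)$; hence $d(q(g),y)\ge d(q(e^s),o)=\|s\|_{\p}$ for every $y\in C'$. This is the natural argument from the ingredients the paper cites and is much cleaner than your dual-decomposition route. The reverse inequality (existence of a foot point in $C'$ at distance exactly $\|s\|_{\p}$) is the delicate half; your proposal does not secure it, and you should expect to need either the reflective symmetry to lift the geodesic $t\mapsto q(e^{(1-t)s})$ in $C$ horizontally to a curve of the same length ending in $C'$, or the dual of the first half of Theorem~\ref{split}'s proof with the roles of $p$ and $1-p$ exchanged once a foot point is known to exist.
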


\subsection{Positive elements}\label{concrete}

For a symmetric Banach-Lie group $(G,\sigma)$ one has the natural involution $^*:G\to G$ given by $g^*=\sigma(g^{-1})=\sigma(g)^{-1}$. It allows to write down the quotient map in a concrete way as
$P:G\to G$, $P(g)=gg^*$ (note that the isotropy of $1\in G$ is just $K=$ the fixed-point set of $\sigma$). Thus $M:=P(G)\simeq G/K$ has a natural structure of Finsler manifold with spray, under the usual hypothesis.

The set $P(G)$ is the set of positive invertible elements when $G$ is one of the so called classical Banach-Lie groups (see the Appendix). In this picture, the geodesics of $M$ are given by
$$
\alpha(t)=e^v e^{2tz}e^v.
$$
Let $G^s$ stand for the set of invertible self-adjoint elements, $g^*=g$, that is
$$
G^s=\{g\in G: \sigma(g)=g^{-1}\}.
$$
Then the natural action of $G$ on $G^s$ is $a\mapsto gag^*$, and if $G=\bh^{\times}$ is the subgroup of invertible elements of $\bh$ (the bounded linear operators on a Hilbert space ${\mathcal H}$), then this action defines Banach homogeneous spaces $G^{s,a}$, the orbits of $a\in G^s$; the existence of smooth local sections is essentially given by the square root of $\bh$, see \cite[Prop. 1.1]{cpr1} for the details. Via polar decomposition, one has the projection $\pi:G^s\to K^s$, where $K^s$ is the set of reflections of $G$, i.e. the set of self-adjoint elements of $K$. That is, write $g=e^vk$ where $v\in \p$ and $k\in K$ (see Theorem \ref{fibras}), and for $g\in G^s$, put $\pi(g)=k$. If $G=\bh^{\times}$, this fibration $\pi$ has very nice properties, for instance its differential is a contraction \cite[Th. 5.1]{cpr1}, a fact related to  the geodesic structure of the group of reflections $K^s$.

\section{Appendix: examples and applications}\label{appendix}

Here we indicate some applications to operator theory. We concentrate on operators ideals, and we omit other relevant examples such as bounded symmetric domains and $JB^*$-algebras. See \cite[Section 6]{neeb} for further discussion on these topics. An account of the applications for semi-finite von Neumann algebras  as studied in \cite{al} in the Riemannian situation, will be subject of a future publication.

\subsection{Operator algebras}\label{operalg}

Let $\bh$ stand for the set of bounded linear operators on a separable complex Hilbert space ${\cal H}$, with the uniform norm denoted by $\|\cdot\|$. Let $\|\cdot\|_{\cal I}:\bh \to \mathbb R_+ \cup \{\infty\}$ be an unitarily invariant norm, $\|uxv\|_{\cal I}=\|x\|_{\cal I}$ for unitary $u,v\in \bh$. Let ${\cal I}$ stand for the set of operators with finite norm, that is
$$
{\cal I}=\{x\in \bh: \|x\|_{\cal I}<\infty\}.
$$
Further one asks that 
\begin{enumerate}
\item $\|xyz\|_{\cal I}\le \|x\|\; \|y\|_{\cal I}\; \|z\|$ for any $y\in {\cal I}$ and $x,z\in \bh$.
\item  $\left({\cal I},d_{\cal I}\right)$ is a complete metric space, where $d_{\cal I}(x,y)=\|x-y\|_{\cal I}$.
\end{enumerate}

Then ${\cal I}$ is a complex self-adjoint ideal of compact operators in $\bh$, the standard reference on the subject is the book of Gohberg and Krein \cite{gk}. If $y\mapsto y^*$ denotes the usual involution of $\bh$, then it is easy to check $\|y^*\|_{\cal I}=\|y\|_{\cal I}$ and further, that the norm is unitarily invariant in the sense that
$$
\|uyv\|_{\cal I}=\|y\|_{\cal I}
$$
for any $y\in {\cal I}$ and $u,v\in \bh$ unitary operators.

\begin{rem}
The elementary examples are given by the Schatten ideals $\bp$ of operators, defined by the $p$-norms in $\bh$ ($1\le p<\infty$) by
$$
\|v\|_p^p=tr |v|^p=tr( (v^*v)^{\frac{p}{2}} ),
$$
where $tr$ is the infinite trace of $\bh$. Elements of $\bp$ are compact operators whose spectra is $l^p$ summable. One has the inequalities
$$
\|v\|\le \|v\|_p\le \|v\|_q \le \cdots\le \|v\|_1
$$
for $p\ge q$, and the inclusions
$$
{\cal B}_1({\cal H})\subset \cdots \subset {\cal B}_q({\cal H})\subset \bp \subset \dots \subset {\cal K}({\cal H}),
$$
where ${\cal K}({\cal H})$ denotes the ideal of compact operators. The trace map $(v,w)\mapsto tr(vw^*)$ induces the duality $\bp^*={\cal B}_q({\cal H})$ for $1/p+1/q=1$ and $1<p<\infty$. Moreover, ${\cal K}({\cal H})^*={\cal B}_1({\cal H})$ and ${\cal B}_1({\cal H})^*=\bh$. The ${\cal B}_p({\cal H})$ spaces are $2$-uniformly convex for $p\in (1,2]$ and $p$-uniformly convex for $p\in [2,+\infty)$, due to McCarthy's inequalities \cite{mc}.
\end{rem}

Let $G^{\cal I}$ stand for the group of invertible operators in the unitized ideal, that is
$$
G^{\cal I}=\{1+x:\; x\in {\cal I}, \;Sp(1+x)\subset \mathbb R^*\},
$$
where $Sp$ denotes the usual spectrum of an element in $\bh$. Equivalently
$$
G^{\cal I}=\{g\in \bh^{\times}: \;g-1\in {\cal I}\}.
$$
Then $G^{\cal I}$ is a Banach-Lie group (one of the so-called classical Banach-Lie groups \cite{harpe}), open in ${\cal I}$ with the inherited topology, and ${\cal I}$ identifies with its Banach-Lie algebra: it suffices to prove that a neighborhood of $1\in G^{\cal I}$ is isomorphic to ${\cal I}$. To prove these statements consider the usual analytic logarithm: for $\|g-1\|_{\cal I}<1$ put $\log(g)=\sum_k (1-g)^n$. Then if $g\in G^{\cal I}$ is such that $\|g-1\|_{\cal I}<1$, $x=log(g)\in {\cal I}$ and $e^x=g$.

\medskip

Let ${\cal I}_h$ stand for the set of self-adjoint elements in ${\cal I}$, and consider $M^{\cal I}$ the cone of positive invertible elements in the unitized ideal:
$$
M^{\cal I}=\{1+x:\; x\in {\cal I}_h, \;Sp(1+x)\subset (0,+\infty)\}.
$$

Consider the involutive automorphism $\sigma:G^{\cal I} \to G^{\cal I}$ given by $g\mapsto (g^{*})^{-1}$. Let $U^{\cal I}\subset G^{\cal I}$ stand for the unitary subgroup of fixed points of $\sigma$. Its Banach-Lie algebra is the set of skew-hermitian elements of ${\cal I}$, and ${\cal I}={\cal I}_h\oplus i {\cal I}_h$. The quotient space $G^{\cal I}/ U^{\cal I}$ can be identified with $M^{\cal I}$ via $q:G^{\cal I}\to M^{\cal I}$ given by $q(g)=gg^*$ as in Section \ref{concrete}. 

We claim that the unitarily invariant norm of ${\cal I}$ makes of $(G^{\cal I},\sigma)$ a SNC group. We use the criteria of Proposition \ref{criterio}:
$$
\|e^{i\ad x}v\|_{\cal I}=\|Ad_{e^{ix}}v\|_{\cal I}=\|e^{ix}ve^{-ix}\|_{\cal I}=\|v\|_{\cal I}
$$
for any $x,v\in {\cal I}_h$, and then $1- it\ad x$ is expansive and invertible for any $t>0$, hence $1+t\ad^2_x$ is expansive and invertible for any $t>0$, proving that $-\ad^2_x$ is dissipative for any $x\in {\cal I}_h$.

Thus the positive cone $M^{\cal I}\simeq G^{\cal I}/ U^{\cal I}$ can be regarded as a complete manifold of semi-negative curvature, since it is geodesically complete. Moreover, since $Z({\cal I})=\{0\}$ for a proper ideal ${\cal I}$, then $M^{\cal I}$ is simply connected and $\exp: {\cal I}_h\to M^{\cal I}$ is an isomorphism.

The unique geodesic of $M^{\cal I}$ joining positive invertible $a,b\in M^{\cal I}$ is short and is given by 
$$
\gamma_{a,b}(t)=a^{\frac12}(a^{-\frac12}ba^{-\frac12})^t a^{\frac12}.
$$
Its length is given by $\|\ln(a^{-\frac12}ba^{-\frac12})\|_{\cal I}$, and the exponential map at $a\in M^{\cal I}$ is given by
$$
\exp_a(v)=a^{\frac12}\exp(a^{-\frac12}va^{-\frac12}) a^{\frac12},
$$
whenever $v\in T_aM^{\cal I}\simeq {\cal I}_h$. In particular
$$
d(a,b)=\|\ln(a^{\frac12}b^{-1}a^{\frac12})\|_{\cal I}.
$$
The semi-negative curvature condition is the (well-known for matrices, see for instance \cite{bhatia}) \textit{exponential metric increasing property}
$$
\|\ln(a^{-\frac12}b a^{-\frac12})\|_{\cal I}\ge \|\ln(a)-\ln(b)\|_{\cal I}.
$$
The convexity of the geodesic distance among two geodesics starting at $a=1$ apparently is  given by the inequality
$$
\|\ln(a^{-\frac{t}{2}}b^{t}a^{-\frac{t}{2}})\|_{\cal I}\le t\|\ln(a^{-\frac12}b a^{-\frac12})\|_{\cal I}.
$$
This inequality seems to be new in this context, but for $\bh$ was extensively studied and it is known as one of the equivalent forms of Löwner-Heinz theorem on monotone operator maps \cite{lowner}. For the $p$-norms of $\bh$ it is stated as
$$
tr((B^{\frac12} A B^{\frac12})^{rp})\le tr((B^{\frac{r}{2}} A^r B^{\frac{r}{2}})^p),\quad r\ge 1,
$$
an inequality due to Araki \cite{araki}. As it is, it was generalized to the noncommutative $L^p(M,\tau)$-spaces of a semi-finite von Neumann algebra $M$ by Kosaki in \cite{kosa2}. In the context of the uniform norm, the relation between this inequality and the convexity of the geodesic distance in the positive cone of $\bh$ was studied in \cite{acs}. 

When ${\cal I}=\bp$, and $p>1$, we can apply the results of Section \ref{uniconvex} to convex closed sets. In particular, if $C\subset M$ is a convex submanifold, one obtains splittings as in Corollary \ref{spliteometrico}. These examples were studied for $p=2$ (the Riemann-Hilbert situation) in \cite{larro,tumpach}. The non-uniformly convex situation, when $p=1$, was studied in \cite{conde}. 

\medskip

In this setting, the standard example of convex submanifold is given by $C=q(e^{\s})$, where $\s$ equals the real Banach-Lie algebra of self-adjoint diagonal operators (relative to a fixed orthonormal basis $\{e_i\}$ of ${\cal H}$), and $\s'$ are the co-diagonal self-adjoint operators.

\begin{rem}\label{otras}
If we decompose a tangent vector $v\in {\cal I}_{h}$, $v=w+z$ and $\|v\|_{\cal I}=\|w\|_{\cal I}+\|z\|_{\cal I}$, then the curve
$$
\delta(t)=\left\{
\begin{array}{lll}    
e^{2t w}  & & t\in [0,1/2]\\
\\
e^we^{(2t-1)z} & & t\in [1/2,1]
\end{array}\right.
$$
is piecewise smooth and joins $1$ to $e^v$ in $M_{\cal I}$; moreover $L(\delta)=\|w\|_{\cal I}+\|y\|_{\cal I}=L(\exp(tv))$, so $\delta$ is a minimizing piecewise smooth curve joining $1$ to $e^v$, and it is not a geodesic unless $w$ and $z$ are aligned. This shows that Proposition \ref{uni} is false for $p=1$ and $p=\infty$ (whose norm is not strictly convex). For example, consider $v=\frac12 p_1+\frac12(p_1+ p_2)$ with $p_i$ mutually orthogonal one dimensional projections in $\bh$. Then $x=\frac12 p_1$ and $y=\frac12(p_1+ p_2)$ commute, and $\|v\|_{\infty}=1=1/2+1/2=\|x\|_{\infty}+\|y\|_{\infty}$.
\end{rem}

\subsection{Inclusions of $C^*$-algebras}\label{inclusion}

Let ${\cal A}\subset \bh$ be a $C^*$-subalgebra, and let ${\cal E}:\bh\to {\cal A}$ be a conditional expectation with range ${\cal A}$. Let $H$ stand for the linear supplement of ${\cal A}$ given by ${\cal E}$, that is $H=\ker {\cal E}$. Then $\|{\cal E}\|=1$ and ${\cal E}$ is a bi-module map, that is ${\cal E}(nmn')=n{\cal E}(m)n'$ for any $n,n'\in {\cal A}$.

In \cite{pr}, the authors studied inclusions of $C^*$-algebras $N\subset M$ with a conditional expectation ${\cal E}:M\to N$. In that settting, one has the inclusions $P_N\subset P_M$ of cones of positive invertible elements; the tangent spaces are the sets of self-adjoint elements of $N$ and $M$ respectively. The projection $p={\cal E}\vrule_{M_h}:M_h\to N_h$ provides a reductive supplement $H=\ker p$ for $N_h$, and moreover $\|p\|=1$. The exponential map provides a splitting of the positive cone $P_M$ of $M$ via the positive cone $P_N$ of $N$ as a convex submanifold, and $H$ as the normal bundle. In such a situation, the norm of $1-{\cal E}$ can be as large as $2$. The purpose of this short section is to extend this situation to the Finsler norms of the $p$-Schatten ideals, applying the results of the previous sections.

\medskip

Let $p\ge 1$ and put ${\cal A}_p={\cal A}\cap \bp$, and ${\cal E}_p={\cal E}\vrule_{\bp}$. In certain situations one can ensure that ${\cal E}({\cal B}_1({\cal H}))\subset {\cal B}_1({\cal H})$. A sufficient condition is that ${\cal E}$ maps finite rank operators into finite rank operators, a condition which is easy to check in most situations. Throughout, it is assumed that the expectation is compatible with the trace, that is $Tr({\cal E}x)=Tr(x)$ for any $x\in {\cal B}_1({\cal H})$. The example to have in mind is that of a maximal abelian subalgebra ${\cal A}$ given by the diagonal operators in some fixed orthonormal base of ${\cal H}$. In this case the conditional expectation is given by compression to the diagonal.

\medskip

Note that by duality (since $\|{\cal E}\|=1$)
\begin{eqnarray}
\|{\cal E}_1(x)\|_1 & = & \sup\limits_{\|z\|\le 1}|tr({\cal E}(x)z)| = \sup\limits_{\|z\|\le 1}|tr({\cal E}(x){\cal E}(z))|= \sup\limits_{\|w\|\le 1, w\in {\cal A}}|tr({\cal E}(x)w)| \nonumber\\
& = & \sup\limits_{\|w\|\le 1,w\in {\cal A}}|tr(xw)|\le \sup\limits_{\|w\|\le 1}|tr(xw)|=\|x\|_1.\nonumber
\end{eqnarray}
Thus $\|{\cal E}_1\|\le 1$, and since ${\cal E}_1$ is a projection, $\|{\cal E}_1\|=1$. The essence of the argument is the fact that ${\cal E}$ (as a Banach space linear operator) is self-dual. Then $1-{\cal E}$ is also self-dual, and with the same proof, one also has $\|1-{\cal E}_1\|\le \|1-{\cal E}\|$.

From the fact that $\bp$ can be obtained via complex interpolation from the pair $({\cal B}_1({\cal H}),{\cal B}({\cal H}))$ (see for instance \cite{simon}), and that ${\cal B}_1({\cal H})$ is dense in each $\bp$ (since finite rank operators are dense), it follows that the restriction ${\cal E}_p$ defined above matches the interpolated conditional expectation.

Now we observe that for $p=2$, this restriction is an orthogonal projection: indeed,
\begin{eqnarray}
\|{\cal E}_2(z)\|_2^2 &= & Tr( {\cal E}(z){\cal E}(z)^*)=Tr ({\cal E}(z){\cal E}(z^*))=Tr(z^*{\cal E}(z))\nonumber\\
&\le& Tr(z^*z)^{\frac12} Tr({\cal E}(z){\cal E}(z)^*)^{\frac12}=\|z\|_2\|{\cal E}_2(z)\|_2\nonumber
\end{eqnarray}
by the Cauchy-Schwarz inequality for the trace inner product. With the same argument, $\|1-{\cal E}_2\|=1$. 

From these facts (using interpolation again) follow that, for any $p\in [1,2]$,
$$
\|{\cal E}_p\|=1\; \mbox{ and }\; \|1-{\cal E}_p\|\le \|1-{\cal E}\|^{\frac{2}{p}-1}.
$$
Then by duality,
$$
\|{\cal E}_p\|=1\; \mbox{ and }\; \|1-{\cal E}_p\|\le \|1-{\cal E}\|^{1-\frac{2}{p}}
$$
holds for any $p\in [2,\infty)$.

\medskip

Certainly $[\cal A,\cal A]\subset A$ since ${\cal A}$ is an associative subalgebra, so evidently $\ad_{{\cal A}_h}^2({\cal A}_h)\subset {\cal A}_h$. But note also that, since ${\cal E}$ is a bi-module map, that $[\cal A,\ker {\cal E}]\subset \ker {\cal E}$. So $C=\exp({\cal A}_h)$ is a reductive submanifold of the positive cone of ${\cal B}({\cal H})$. Hence by restricting the conditional expectation to the self-adjoint part of the $p$-Schatten ideals one sees that the positive cone of the (unitized) subalgebra ${\cal A}_p$ has a natural structure of reductive expansive submanifold in $\bp$. Thus one obtains splittings of the respective classical Banach-Lie groups invoking Corollary  \ref{cprgruposplit}:
\begin{teo}
Let ${\cal A}\subset \bh$ be a $C^*$-algebra with a conditional expectation ${\cal E}:\bh\to {\cal A}$ compatible with the trace, such that ${\cal E}({\cal B}_1({\cal H}))\subset {\cal B}_1({\cal H})$. Let $p\ge 1$, and let ${\cal A}_p={\cal A}\cap \bh$. Then for each invertible element $g$, 
$$
g\in G_p({\cal H})=\{g\in \bh^{\times}:g-1\in\bp\}
$$
there exists unique operators $u,g_{\cal A},v_p$ such that
\begin{enumerate}
\item $u$ is a unitary operator and
$$
u\in U_p({\cal H})=\{u\in {\cal U}({\cal H}):u-1\in\bp\},
$$
\item $g_{\cal A}$ is invertible and
$$
g_{\cal A}\in {\cal A}_p^{\times}=\{g\in \bh^{\times}:g-1\in {\cal A}_p\},
$$
\item $v_p\in \bp_h$ and ${\cal E}(v_p)=0$,
\item the operator $g$ can be decomposed as
$$
g=g_{\cal A} e^{v_p} u,
$$
which gives the isomorphism
$$
G_p({\cal H})\simeq {\cal A}_p^{\times}\times \left( \ker {\cal E}\cap \bp_h \right) \times U_p(\cal H).
$$
\end{enumerate}
If $\|1-{\cal E}\|=1$, then $\|v_p\|_p=d(\sqrt{gg^*},{\cal A}_p^+)$, where $d$ indicates the geodesic distance in the positive cone, and ${\cal A}_p^+$ denotes the positive cone of the (unitized) subalgebra ${\cal A}_p$ of $\bp$. Equivalently, if we write $g=e^v$ with $v\in \bp_h$ and $g_{\cal A}=e^{Z_{\cal A}}$ with $Z_{\cal A}$ in the self-adjoint part of ${\cal A}_p$, then $Z_{\cal A}$ is the unique minimizer of the nonlinear functional $\varphi: ({\cal A}_p)_h\to\mathbb R_+$ given by
$$
\varphi: z\mapsto \|\ln(e^{v/2}e^{-z}e^{v/2})\|_p.
$$
\end{teo}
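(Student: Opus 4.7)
The plan is to realize this theorem as a direct specialization of Corollary \ref{cprgruposplit} (CPR splitting for Banach-Lie groups) combined with Theorem \ref{split} (metric characterization of the normal direction), applied to the ideal ${\cal I} = \bp$. First I would set up the ambient homogeneous space: from Section \ref{operalg}, $M^{\cal I} = G_p({\cal H})/U_p({\cal H})$ is a connected, geodesically complete Cartan-Hadamard manifold of semi-negative curvature, and it is simply connected since $Z(\bp) = \{0\}$ forces $\Gamma = \{0\}$ via Theorem \ref{fibras}. Here $\p = \bp_h$, $\k = i\bp_h$, and the $Ad_K$-invariant tangent norm is the Schatten $p$-norm.

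Next, set $\s = ({\cal A}_p)_h$ and $\s' = \ker {\cal E}_p \cap \bp_h$. The contractive idempotent $P := {\cal E}_p\vrule_{\,\bp_h}$, with $\|P\|=1$ by the duality and interpolation estimates established just before the theorem, gives the direct sum decomposition $\bp_h = \s \oplus \s'$. The central structural step is to verify that $C := q(e^{\s})$ is a locally convex, expansive, reductive submanifold of $M^{\cal I}$. The exponential-set condition $\ad^2_{\s}(\s)\subset \s$ of Proposition \ref{triples} is immediate since ${\cal A}$ is an associative subalgebra. For the reductive condition $\ad^2_\s(\s')\subset\s'$, the bi-module property of ${\cal E}$ yields, for $a\in\s$ and $v\in \s'$,
$$
{\cal E}_p(\ad^2_a v) = {\cal E}_p(a^2 v - 2 a v a + v a^2) = a^2 {\cal E}_p(v) - 2 a {\cal E}_p(v) a + {\cal E}_p(v) a^2 = 0.
$$
Local convexity (Proposition \ref{cuasi}) is vacuous because $\Gamma = \{0\}$, and expansiveness (Definition \ref{reflectiva}) is exactly $\|P\|=1$.

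With $C$ recognized as a locally convex expansive reductive submanifold, Corollary \ref{cprgruposplit} produces an isomorphism $C \times \s' \times U_p({\cal H}) \simeq G_p({\cal H})$ via $(e^s, s', k) \mapsto e^s e^{s'} k$, which unpacks to the announced decomposition $g = g_{{\cal A}} e^{v_p} u$ with $g_{{\cal A}} = e^{Z_{{\cal A}}} \in {\cal A}_p^{+}\subset {\cal A}_p^{\times}$, $v_p = s' \in \ker{\cal E}\cap \bp_h$, and $u = k \in U_p({\cal H})$. For the metric assertion, when $\|1-{\cal E}\|=1$ the duality/interpolation argument of the discussion preceding the theorem also gives $\|1-{\cal E}_p\|=1$, so Theorem \ref{split} applies: the normal direction $v_p$ realizes the distance from $q(g)$ to $C$, which translates through the exponential coordinates of Section \ref{concrete} into the formula $\|v_p\|_p = d(\sqrt{gg^*}, {\cal A}_p^+)$. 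Since the functional $\varphi(z) = \|\ln(e^{v/2}e^{-z}e^{v/2})\|_p$ coincides with the map $z \mapsto d(e^v, e^z)$ in the positive cone, and $C$ is closed and convex (Corollary \ref{cerrada}), the uniqueness of the minimizer $Z_{{\cal A}}$ follows from Theorem \ref{minime}.

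The main obstacle is essentially conceptual: identifying which piece of the abstract machinery does each job, and properly tracking the identification between the abstract $\p$-parametrization and the concrete realization as positive operators in the exponential metric. Among the verifications, the only algebraic point requiring more than a one-line check is the reductive condition $\ad^2_\s(\s') \subset \s'$, and this reduces to the bi-module property of ${\cal E}$ as above; everything else (the group splitting, the metric identification, uniqueness) is an immediate application of the results of Sections \ref{totalgeo} and \ref{splitheo}.
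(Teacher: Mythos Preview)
Your proposal is correct and follows essentially the same route as the paper: the theorem is presented there as a direct specialization of Corollary \ref{cprgruposplit}, with the preceding discussion in Section \ref{inclusion} establishing exactly the hypotheses you verify (the interpolation bounds giving $\|{\cal E}_p\|=1$, the bi-module identity giving $[\s,\s']\subset\s'$ and hence reductivity, and simple connectedness from $Z(\bp)=\{0\}$), and Theorem \ref{split} supplying the metric identification under $\|1-{\cal E}\|=1$. One small caveat: your appeal to Theorem \ref{minime} for uniqueness of the minimizer $Z_{\cal A}$ presupposes a Busemann $p$-space structure, which fails for $p=1$; for $p>1$ this is fine, while for $p=1$ uniqueness should instead be read off the injectivity of the splitting map in Theorem \ref{spliteo} together with Theorem \ref{split} (the paper does not spell this out either).
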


\medskip

These factorizations, in the context of $n\times n$ real matrices, for the Riemannian metric induced by the trace, stem back to Mostow \cite{mostow}, where he uses the semi-paralellogram laws to obtain the best approximant, bringing new light on the real linear group.

%NOTA: La geodésica de positivos que empieza en $a$ y termina en $b$ está dada por $g=a^{\frac12}$, $v=\frac12 \ln(a^{-\frac12}ba^{-\frac12})$, donde $q(x)=xx^*$ y además $\dot{\alpha}(0)=q_{*g}(gv)=2gvg^*$. 

\bigskip

\noindent
Cristian Conde and Gabriel Larotonda\\
Instituto de Ciencias \\
Universidad Nacional de General Sarmiento \\
J. M. Gutierrez 1150 \\
(B1613GSX) Los Polvorines \\
Buenos Aires, Argentina  \\
e-mails: cconde@ungs.edu.ar, glaroton@ungs.edu.ar

\end{document}